\newcommand{\supp}{\operatorname{supp}\nolimits}
\newcommand{\End}{\operatorname{End}\nolimits}
\newcommand{\ext}{\operatorname{Ext}\nolimits}
\newcommand{\Hom}{\operatorname{Hom}\nolimits}
\renewcommand{\Im}{\operatorname{Im}\nolimits}
\renewcommand{\mod}{\operatorname{mod}\nolimits}
\newcommand{\Ext}{\operatorname{Ext}\nolimits}
\newcommand{\Z}{\operatorname{\mathbb{Z}}\nolimits}
\newcommand{\Q}{\operatorname{\mathbb{Q}}\nolimits}
\newcommand{\N}{\operatorname{\mathbb{N}}\nolimits}
\newcommand{\gdim}{\operatorname{\underline{dim}}\nolimits}
\newcommand{\rep}{\operatorname{rep}\nolimits}
\newtheorem{theo}{Theorem}[section]
\newtheorem{cor}[theo]{Corollary}
\newtheorem{lemma}[theo]{Lemma}
\newtheorem{prop}[theo]{Proposition}
\newtheorem{defi}[theo]{Definition}
\newtheorem{rem}[theo]{Remark}
\newtheorem{example}[theo]{Example}
\newcommand{\I}{\mathcal{I}}
\newcommand{\T}{\mathcal{T}}
\newcommand{\cc}{\mathcal{C}_Q}
\begin{document}
\baselineskip=15pt
\title[Component cluster]{Component cluster for acyclic quiver}
\author{Sarah Scherotzke}
\address{S.~S.~: University of Bonn, Mathematisches Institut, Endenicher Allee 60, 53115 Bonn, Germany}

\email{sarah@math.uni-bonn.de}

\keywords{}
\subjclass[2000]{}

\begin{abstract} 
The theory of Caldero-Chapoton algebras of Cerulli-Irelli, Labardini-Fragoso and Schr\"oer \cite{CerulliLabardiniSchroer}  
   leads to a refinement of the notions of cluster variables and clusters, via so called component clusters. 
 In this paper we compare component clusters to classical clusters for the cluster algebra of an acyclic quiver. 
We propose a definition of mutation between component clusters and determine the mutation relations of  component clusters for affine quivers.  
In the case of a wild quiver, we provide bounds for the size of component clusters.  
\end{abstract}

\maketitle

\section{introduction}
In \cite{CerulliLabardiniSchroer} Cerulli-Irelli, Labardini-Fragoso and Schr\"oer propose a broad generalization of the theory of cluster algebras   \cite{FominZelevinsky02}. 
They give a recipe to attach to any basic algebra $\Lambda$  a subalgebra 
$\mathcal{A}_{\Lambda}$ 
of a ring of rational functions:  
$\mathcal{A}_\Lambda$  is the \emph{Caldero-Chapoton algebra} of $\Lambda$. 
Similarly to cluster algebras, 
Caldero-Chapoton algebras come with 
an interesting collection of sets of generators which are called \emph{CC--clusters}. 
 In this paper, we investigate various  properties of Caldero-Chapoton algebras and CC--clusters in the special case when $\Lambda$ is the path algebra of an acyclic quiver.  

Note that if $Q$ is an acyclic quiver and $\Lambda = kQ$ is its path algebra, the Caldero-Chapoton algebra of $\Lambda$ is equal to the ordinary cluster algebra of $Q$, $\mathcal{A}_Q$. 
However even in this case the set of generators of $\mathcal{A}_Q$ that we obtain by viewing it as a Caldero-Chapoton algebra is larger than the set of classical cluster variables. Further, contrary to classical clusters, $CC$--clusters can have smaller cardinality than the vertex set of $Q$: the classical clusters of $\mathcal{A}_Q$ coincide with the CC-clusters of maximal size.


The construction of the $CC$--clusters in \cite{CerulliLabardiniSchroer} builds on work of Caldero, Chapoton and Keller  on the cluster character \cite{CalderoChapoton06}, \cite{CalderoKeller08}. 
The authors introduce  first  \emph{component clusters} which are families of irreducible components of the representation varieties of $\Lambda$ having some special properties. $CC$--clusters are then obtained by applying the Caldero-Chapoton map to the component clusters, see \cite{CerulliLabardiniSchroer} for a fuller explanation. 

In this paper we study the structure of component clusters when $\Lambda = kQ$, and $Q$ is an acyclic quiver.
We show that, as a consequence of Kac's Theorem \cite{Kac}, component clusters are in bijection with sets of pairwise $ext$-orthogonal distinct Schur roots.
Hence component clusters are closely linked to generic decompositions of dimension vectors \cite{Kac}, which have been studied also by Schofield \cite{Schofield} and more recently by Derksen and Weyman \cite{DerksenWeyman}.

When $Q$ is affine, we give a complete description of component clusters: they are either of size $n$ or $n-1$, where $n$ is the number of vertices of $Q$. Component clusters are of size $n-1$ if and only if they contain the unique positive isotropic Schur root. 
The situation is considerably more complicated when $Q$ is of wild type.
However, in this case, we are able to obtain an optimal upper bound for the number of imaginary Schur roots appearing in a component cluster. 
We also show that, if $Q$ is of wild type, we always have an infinite number of component clusters of size one. 
Further, motivated by the exchange relations between cluster variables, we give a definition of exchange relations between component clusters. 
For affine quivers, we explicitly determine these exchange relations.

The paper is structured as follows:  
In the first section we recall Kac's generic decomposition Theorem and classical results on root systems of quivers. We introduce negative Schur roots in order to define generic decompositions for any vector in $\Z^n$. 

In the second section, we determine the cluster components for affine quivers. In section three, we study the sizes of component clusters  
if $Q$ is of wild type. 
Finally, in the last section, we define mutations of component clusters and give an interpretation of exchange relations between two component clusters that are connected by a mutation. We work out the exact exchange relations for affine quivers.

{\bf Acknowledgements: } The author is grateful to Giovanni Cerulli-Irelli, Daniel Labardini-Fragoso and Jan Schr{\"o}er for useful conversations, and to Nicol{\' o} Sibilla for comments the first draft.

\section{Generalized generic decompositions and cluster components of quivers}
\subsection{Back to the roots}

In this section, we introduce notations  and recall some basic facts on root systems of acyclic quivers (see also  \cite{CrawleyBoevey92} and \cite{Schofield}).
Throughout this paper, $Q$ is a finite quiver without oriented cycles and $k$ a field. We denote the set of vertices by $Q_0$. We assume that the vertices are equipped with a total order and we  denote them 
$1, \ldots, n$. We denote the set of arrows $Q_1$. Furthermore we denote $s$ and $t$ the maps $s, t: Q_1 \to Q_0$ which send an arrow to its source and to its target respectively. 

To a dimension vector $d: Q_0 \to  \N$, we associate the variety of representations $$\rep_d Q:=\bigoplus_{a \in Q_1} \Hom(k^{d(s(a))}, k^{d(t(a))}).$$
It is a finite-dimensional vector space, hence an irreducible affine variety. There is a canonical action of $\prod_{i\in Q_0} Gl_{d(i)}(k)$ on $\rep_d Q$ having the property that the $G_d$ orbits 
are in bijection with the isomorphism classes of $kQ$-modules of dimension vector $d$. 

The \emph{support} of a dimension vector $d\in \N^n$ is the subset $$\supp(d):=\{i\in Q_0| d(i) \not=0\}$$ of $Q_0$. We say that the support is \emph{connected} if the full subquiver  of $Q$ generated by the vertices belonging to $\supp(d)$ is connected. 
A dimension vector is called \emph{a root}, if $\rep_d Q$ contains an indecomposable representation.  
A \emph{Schur root} is a dimension vector of a representation whose endomorphism ring is isomorphic to $k$. Such a representation is necessarily indecomposable and is called a  \emph{Schurian representation}. 

Let $d$ and $b$ be two dimension vectors. The functions 
$$hom(-,-): \rep_d Q \times \rep_b Q \to \N, \ \ (M, N)\mapsto \dim \hom_Q(M, N),$$ $$ext(-,-): \rep_d Q \times \rep_b Q \to \N \ \ (M, N)\mapsto \dim \ext^1_Q(M, N)$$ and $$end(-): \rep_d Q \to \N,\ \ N \mapsto \dim \End_Q(N)$$ are upper semi-continous.
Hence there are open subsets in $\rep_d Q \times \rep_b Q$ on which $ext$ and $hom$ are constant of minimal value and there is an open subset of $\rep_d Q$ on which $end$ is constant of minimal value. We set $ext(d,b)$, $hom(d,b)$ and $end(d)$ to be the minimal value of these functions. Note that all open 
subsets of an irreducible variety are  dense.

 We call two dimension vectors $a$ and $b$ \emph{ext-orthogonal} if $ext(a,b)$ and $ext(b,a)$ vanish. It also follows from upper semi-continuity that for any Schur root $d$ there is a dense open subset of Schurian representations in $\rep_d Q$.
Let $n$ be the cardinality of $Q_0$. 
The \emph{Euler form} is the bilinear form $$\langle -,-\rangle:\Z^n\times \Z^n \to \Z,\ (a,b) \mapsto \sum_{i \in Q_0} a_ib_i- \sum_{f\in Q_1} a_{s(f)}b_{t(f)}.$$ 
By \cite{Schofield}, the Euler form 
can alternatively be described by the formula $$\langle  a,b\rangle=\dim \Hom(M,N) -\dim \ext(N,M)$$ for any $M \in \rep_a Q$ and any $N \in \rep_b Q$. As two open sets intersect non-trivially in an irreducible variety, we also have the identity 
$$
\langle a,b\rangle=hom(a,b)-ext(a,b).
$$

The \emph{symmetrized Euler form} is the bilinear form on $\Z^n\times \Z^n$ given by $$(a,b)\mapsto \langle         a,b\rangle+\langle          b,a\rangle$$ and the \emph{Tits form} is the quadratic form $q(a):= \langle         a,a\rangle$ for all $a$ and $b\in \Z^n$. 
Roots are classified by their Tits form into three types. 
We refer to a root $d$ as \emph{real} if $q(d)=1$, as \emph{imaginary} if $q(d)\le 0$ and as \emph{isotropic} if $q(d)=0$. 
A representation $M$ in $\rep_d Q$ is \emph{rigid} if $\ext^1(M,M)$ vanishes. This is the case if and only if the representations isomorphic to $M$ form an open subset of $\rep_{d} Q$. If $d$ is a root and $\rep_{d} Q$ contains a rigid representation, then $d$ is a real Schur root. Conversely, if $d$ is a real Schur root then $\rep_{d} Q$ contains a  rigid representation $M$, which is necessarily Schurian. Further all Schurian representations of dimension vector $d$ are isomorphic to $M$.

Kac's generic decomposition theorem shows that Schur roots play an important role in understanding the variety of representations of a quiver:

\begin{theo}\cite{Kac}\begin{itemize}
\item[1.] Every dimension vector $d$ has a unique decomposition $$d= d_1 \oplus d_2 \oplus \cdots  \oplus d_s$$ as a sum of Schur roots $d_i$, such that the image of the natural embedding $$\coprod_{i=1}^s \rep_{d_i} Q \to \rep_d Q ,\  ( M_1, \ldots, M_s) \mapsto \bigoplus_{i=1}^s M_i$$ is an open set. In this case the generic extensions $ext(d_i,d_j)$ vanish for all $i\not =j$.

\item[2.] Conversely, every decomposition of $d$ into a sum of Schur roots $d_i$ such that the generic extensions $ext(d_i,d_j)$ vanish for all $i\not =j$, gives rise to an open embedding of $\coprod_{i=1}^s \rep_{d_i} Q $ into $\rep_d Q $.
\end{itemize}
\end{theo}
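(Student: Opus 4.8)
The plan is to derive both parts from a single dimension count, together with the stratification of $\rep_dQ$ by Krull-Schmidt type. Throughout I would use freely that a $G_d$-stable constructible subset of the irreducible variety $\rep_dQ$ contains a dense open subset if and only if it is dense, and the identity $\dim G_d-\dim\rep_dQ=q(d)$, which is immediate from the definitions of the Euler and Tits forms.

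\textbf{The dimension count.} Given any decomposition $d=d_1\oplus\cdots\oplus d_s$ into Schur roots, let $U_i\subseteq\rep_{d_i}Q$ be the dense open subset of Schurian representations and let $Z\subseteq\rep_dQ$ be the image of $\mu\colon G_d\times\prod_iU_i\to\rep_dQ$, $(g,(M_i))\mapsto g\cdot\bigoplus_iM_i$; it is irreducible, constructible and $G_d$-stable. I would compute a general fibre of $\mu$: a general point of $Z$ is isomorphic to $\bigoplus_iM_i$ with the $M_i$ Schurian, so by Krull-Schmidt the $M_i$ are determined up to isomorphism, contributing $\sum_i(\dim G_{d_i}-1)$ for the choice of the $M_i$ within their orbits, while for each fixed $(M_i)$ the $g$ hitting a given point form a torsor under $\aut(\bigoplus_iM_i)$ of dimension $\sum_{i,j}hom(d_i,d_j)$. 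This yields $\dim Z=\dim G_d+\sum_i ext(d_i,d_i)-\sum_{i,j}hom(d_i,d_j)$, and substituting $hom(d_i,d_j)=\langle d_i,d_j\rangle+ext(d_i,d_j)$, $\sum_{i,j}\langle d_i,d_j\rangle=\langle d,d\rangle=q(d)$ and $hom(d_i,d_i)=1$ (valid since $d_i$ is a Schur root) collapses it to
$$\dim\rep_dQ-\dim Z=\sum_{i\ne j}ext(d_i,d_j).$$
Thus $Z$ is dense precisely when $ext(d_i,d_j)=0$ for all $i\ne j$, which is part 2. For part 1, once I know the general representation of $d$ is $\bigoplus_iM_i$ with each $M_i$ general of Schur-root dimension $e_i$ (next paragraph), the construction forces $Z=\rep_dQ$, and the same identity then gives $\sum_{i\ne j}ext(e_i,e_j)=0$.

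\textbf{Existence.} For fixed $d$ there are only finitely many multisets of nonzero dimension vectors summing to $d$, so $\rep_dQ$ is a finite disjoint union of the $G_d$-stable constructible sets $S_{\{e_i\}}=\{N:N\cong\bigoplus_iN_i,\ N_i\ \text{indecomposable of dimension}\ e_i\}$; exactly one of them, say $S_{\{e_1,\dots,e_r\}}$, is dense, producing a decomposition $d=e_1\oplus\cdots\oplus e_r$ whose general representation is $\bigoplus_iM_i$ with $M_i$ general indecomposable of dimension $e_i$. The crux — and the step I expect to be the real obstacle — is that each $e_i$ is a Schur root. I would induct on $\sum_j d(j)$: if $r\ge 2$, each $e_i<d$ and its general representation is indecomposable, so by induction its generic decomposition into Schur roots has, by Krull-Schmidt, a single term and $e_i$ is a Schur root; the base case is the assertion that a dimension vector $e$ with indecomposable general representation is a Schur root. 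For that I would argue by contradiction: an indecomposable but non-Schurian general representation $M$ of dimension $e$ admits a nonzero $\varphi\in\End M$ with $\varphi^2=0$, whence $0\subsetneq\Im\varphi\subsetneq M$ shows the general representation has a proper subrepresentation and a proper quotient of some dimension $f$; Schofield's subrepresentation criterion \cite{Schofield} then gives $ext(f,e-f)=ext(e-f,f)=0$, and splicing the inductively known generic decompositions of $f$ and $e-f$ into Schur roots and invoking part 2 exhibits the general representation of $e$ as a sum of at least two indecomposables — a contradiction. With this granted, the dimension count above supplies the vanishing of the cross extensions, finishing existence.

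\textbf{Uniqueness.} If $d=\bigoplus_id_i=\bigoplus_jd_j'$ are two decompositions into Schur roots with all pairwise generic extensions vanishing, then by part 2 the loci of representations isomorphic to $\bigoplus_iM_i$, respectively to $\bigoplus_jM_j'$, with Schurian summands, each contain a dense open subset of $\rep_dQ$ and hence meet; choosing a representation in the intersection and applying Krull-Schmidt gives $\{d_i\}=\{d_j'\}$ as multisets. Everything but the base case of the induction in the existence step — that an indecomposable generic dimension vector is automatically a Schur root — reduces to semicontinuity, the Euler form and Krull-Schmidt; that one input is the genuine substance of Kac's analysis \cite{Kac}, which I would ultimately cite.
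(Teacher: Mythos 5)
The paper does not actually prove this statement: it is quoted from \cite{Kac} and used as a black box, so there is no internal argument to measure yours against. Your reconstruction follows the standard Kac/Kraft--Riedtmann/Schofield line, and most of it is sound: the fibre-dimension count giving $\dim\rep_dQ-\dim Z=\sum_{i\ne j}ext(d_i,d_j)$ is correct, the uniqueness argument via Krull--Schmidt on a point of the intersection of two dense strata is fine, and the base case via a square-zero endomorphism together with the elementary direction of Schofield's subrepresentation criterion works (two caveats: you need $k$ algebraically closed so that $\End M/\operatorname{rad}\cong k$ for $M$ indecomposable, and you should say explicitly that you only use the direction ``generic subrepresentation of dimension $f$ implies $ext(f,e-f)=0$'', which comes from the dimension count on the universal Grassmannian and does not presuppose the canonical decomposition).

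The genuine gap is in the existence step, and it sits exactly where Kac's proof does its real work. Having isolated the unique dense Krull--Schmidt stratum $S_{\{e_1,\dots,e_r\}}$ with $r\ge 2$, you assert that each $e_i$ ``has indecomposable general representation'' and feed this into the induction. Density of the stratum only says that the general point of $\rep_dQ$ has indecomposable summands of dimensions $e_i$; it does not say that the general point of $\rep_{e_i}Q$ is indecomposable --- a priori the summands occurring could all lie in a proper closed subset of $\rep_{e_i}Q$ made of non-generic indecomposables. This is precisely the assertion that the canonical summands are generically indecomposable dimension vectors, which Kac obtains from his count of the number of parameters of indecomposables. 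The gap is closable with your own technique: repeating the fibre computation for $G_d\times\prod_j\mathrm{Ind}_{e_j}\to\rep_dQ$ and using $\hom(N_j,N_l)=\langle e_j,e_l\rangle+\dim\ext^1(N_j,N_l)$ gives $\dim S\le\dim\rep_dQ-\sum_{j\ne l}\dim\ext^1(N_j,N_l)-\sum_j\bigl(\dim\rep_{e_j}Q-\dim\mathrm{Ind}_{e_j}\bigr)$, so density of $S$ forces every $\mathrm{Ind}_{e_j}$ to be dense in $\rep_{e_j}Q$, after which your induction goes through. As written, however, the induction silently assumes the key point it is supposed to establish.
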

This unique decomposition of a dimension vector into a sum of Schur roots is called the \emph{generic decomposition}.

We will use the following standard notation. We denote by $P_i$, $I_i$ and $S_i$ respectively the projective indecomposable, injective indecomposable and simple module associated to the vertex $i \in Q_0$. 

\subsection{The cluster category}

Here we briefly summarize the relations between quiver representations and the theory of cluster algebras. We refer to \cite{Keller08c} for a fuller account. We assume that the ground field $k$ has characteristic $0$.
Let $\mathcal{D}_Q$ denote the bounded derived category of
$kQ$-modules. It is a triangulated category and
we denote its suspension functor by $\Sigma:\mathcal{D}_Q\to
\mathcal{D}_Q$. As $kQ$ has finite global dimension,
Auslander-Reiten triangles exist in $\mathcal{D}_Q$ by
Theorem 1.4 of \cite{Happel91a}. We denote
the Auslander-Reiten translation of $\mathcal{D}_Q$ by $\tau$. 
On non-projective modules, it coincides with the Auslander-Reiten
translation of $\mod kQ$. The
{\em cluster category} \cite{BuanMarshReinekeReitenTodorov06}
\[ \mathcal{C}_Q=\mathcal{D}_Q/(\tau^{-1} \Sigma)^{\Z}\] is the orbit
category of $\mathcal{D}_Q$ under the action of the cyclic group
generated by $\tau^{-1} \Sigma$. One can show \cite{Keller05} that
$\mathcal{C}_Q$ admits a canonical structure of triangulated
category such that the projection functor $\pi:\mathcal{D}_Q \to
\mathcal{C}_Q$ becomes a functor of triangulated categories. 

 We
refer to \cite{CalderoKeller08} for the definition of the
cluster character $L \mapsto X_L$ from the set of
isomorphism classes of objects $L$ of $\mathcal{C}_Q$ to the ring of Laurent polynomials 
$k[x_1^{\pm 1}, \ldots, x_n^{\pm 1}]$. We have $X_{\tau P_i}= x_i$ for all vertices $i$ of
$Q$ and $X_{M\oplus N}= X_M X_N$ for all objects $M$ and $N$ of
$\mathcal{C}_Q$.
We call an object $M$ in $\mathcal{C}_Q$ {\em rigid} if it has no
self-extensions, that is if the space $\ext^1_{\mathcal{C}_Q}(M,M)$ vanishes. The next Theorem explains in which way the cluster character 
allows us to view the cluster category as a categorification of $\mathcal{A}_Q$.

\begin{theo}[\cite{CalderoKeller08}] \label{acyclic-main-thm}\begin{itemize}
\item[a)] The map $L \mapsto X_L$ induces a bijection from the set
of isomorphism classes of rigid indecomposable objects of the cluster
category $\mathcal{C}_Q$ onto the set of cluster variables of the
cluster algebra $\mathcal{A}_Q$. \item[b)] If $L$ and $M$ are
indecomposables and $\Ext^1_{\mathcal{C}_Q}(L,M)$ is
one-dimensional, then we have  a generalized exchange relation
\begin{equation}\label{eq:acyclic-main-thm}
X_L X_M = X_E + X_{E'}
\end{equation}
where $E$ and $E'$ are the middle terms of the `unique' non split triangles
\[
\xymatrix{L \ar[r] & E \ar[r] & M \ar[r] & \Sigma L} \mbox{ and }
\xymatrix{M \ar[r] & E' \ar[r] & L \ar[r] & \Sigma M}
\]
\end{itemize}
\end{theo}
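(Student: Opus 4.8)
The plan is to establish part (b) first and then to deduce part (a) from it; full details are in \cite{CalderoKeller08}. The cluster character $L \mapsto X_L$ is obtained by extending the Caldero--Chapoton map for $kQ$-modules: for a $kQ$-module $M$ of dimension vector $d$ one sets $X_M = \sum_e \chi(\Gr_e(M))\, \mu(e,d)$, where $\Gr_e(M)$ is the projective variety of subrepresentations of $M$ of dimension vector $e$, $\chi$ is the topological Euler characteristic, and $\mu(e,d)$ is the Laurent monomial in $x_1, \dots, x_n$ read off from $e$ and $d$ via the Euler form; this is extended to all of $\mathcal{C}_Q$ by the rules $X_{\Sigma P_i} = x_i$ and $X_{X \oplus Y} = X_X X_Y$, which is unambiguous since every object of $\mathcal{C}_Q$ is, uniquely, a direct sum of indecomposable $kQ$-modules and copies of the $\Sigma P_i$. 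For (b), the transformation behaviour of $X_?$ under $\Sigma$ and $\tau$, together with the description of $\Hom_{\mathcal{C}_Q}$ and $\Ext^1_{\mathcal{C}_Q}$ in terms of $\Hom_{kQ}$ and $\ext^1_{kQ}$, reduces the exchange relation $X_L X_M = X_E + X_{E'}$ to identities of Euler characteristics of quiver Grassmannians. The prototypical one arises when one of the two triangles is realised by a non-split short exact sequence $0 \to L \to E \to M \to 0$ of $kQ$-modules (the hypothesis $\dim_k \Ext^1_{\mathcal{C}_Q}(L,M) = 1$, which by $2$-Calabi--Yau duality is symmetric in $L$ and $M$, is exactly what makes $E$ and $E'$ well defined up to isomorphism); in that situation, after cancelling the monomial weights, which are additive along direct sums of dimension vectors, the relation amounts to
$$\chi(\Gr_e(E)) + \chi(\Gr_e(E')) = \sum_{f + g = e} \chi(\Gr_f(L))\,\chi(\Gr_g(M))$$
for every dimension vector $e$, the right-hand side being $\chi(\Gr_e(L \oplus M))$.

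To prove such an identity I would stratify the quiver Grassmannian of the first middle term $E$ according to the dimension vector of the intersection of a subrepresentation with the canonical sub $L$, and the quiver Grassmannian of the second middle term $E'$ according to the dimension vector of the intersection with the canonical sub $M$. Each stratum maps to a product $\Gr_f(L) \times \Gr_g(M)$, and the fibres of these maps are constructible sets whose Euler characteristics can be computed explicitly: extending a compatible pair of subrepresentations to a subrepresentation of the middle term is governed by lifting an extension class, and the relevant spaces of lifts are affine spaces, or affine-space bundles, away from the small locus controlled by the unique non-split class spanning $\Ext^1_{\mathcal{C}_Q}(L,M)$. The one-dimensionality of this extension space forces the apparent discrepancy between the $E$-side count and the $E'$-side count to be concentrated precisely on that locus, where it cancels upon passing to Euler characteristics; summing the contributions of the strata then yields the displayed identity. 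Reinstating the monomial weights gives $X_L X_M = X_E + X_{E'}$, and the remaining cases, where $E$ or $E'$ involves a shifted projective, are handled by the same circle of ideas after translating by $\Sigma$ or $\tau$.

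For part (a), surjectivity uses the structure theory of cluster-tilting objects in $\mathcal{C}_Q$. The object $kQ[1] = \bigoplus_{i} \Sigma P_i$ is a basic cluster-tilting object and $X_{\Sigma P_i} = x_i$, so the initial cluster of $\mathcal{A}_Q$ is $\{X_{T} : T \text{ an indecomposable summand of } kQ[1]\}$. If $T = \bigoplus_i T_i$ is a basic cluster-tilting object and $T' = T/T_k \oplus T_k^{*}$ is its mutation at $T_k$, there are exchange triangles $T_k \to B \to T_k^{*} \to \Sigma T_k$ and $T_k^{*} \to B' \to T_k \to \Sigma T_k^{*}$ with $B, B' \in \add(T/T_k)$ and $\dim_k \Ext^1_{\mathcal{C}_Q}(T_k, T_k^{*}) = 1$; applying (b) and the multiplicativity of $X_?$ gives $X_{T_k} X_{T_k^{*}} = X_B + X_{B'}$, and the multiplicities of $T_i$ in $B$ and in $B'$ are the numbers of arrows between $k$ and $i$ in the quiver of $\End_{\mathcal{C}_Q}(T)^{\mathrm{op}}$, which mutates exactly as the exchange matrix of the seed does. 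Since the mutation graph of basic cluster-tilting objects of $\mathcal{C}_Q$ is connected and every cluster of $\mathcal{A}_Q$ is obtained from the initial one by mutation, induction on the number of mutations shows that $\{X_{T_i}\}$ runs over all clusters of $\mathcal{A}_Q$ as $T$ runs over the basic cluster-tilting objects; in particular every cluster variable equals $X_T$ for some indecomposable rigid object $T$. For injectivity, every indecomposable rigid object of $\mathcal{C}_Q$ is a direct summand of some basic cluster-tilting object and is, up to applying $\Sigma$, either an exceptional $kQ$-module or one of the $\Sigma P_i$; expanding $X_M$ as a Laurent polynomial in $x_1,\dots,x_n$ one sees that its denominator records the dimension vector of the module $M$, while the $\Sigma P_i$ are singled out as the initial variables themselves, so distinct indecomposable rigid objects have distinct images and $L \mapsto X_L$ is injective.

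The main obstacle is the geometric core of (b): carrying out the stratification of the quiver Grassmannians of the two middle terms, computing the Euler characteristics of the fibres, and checking that after reinserting the monomial weights the two Laurent expansions coincide. Everything else is formal --- the case analysis reducing (b) to these Euler-characteristic identities, and the inductive bootstrap deducing (a) --- once one has the structure theory of cluster-tilting objects and their mutations in $\mathcal{C}_Q$ and the compatibility of Fomin--Zelevinsky quiver mutation with cluster-tilting mutation.
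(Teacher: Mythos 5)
This theorem is not proved in the paper at all: it is imported verbatim from \cite{CalderoKeller08} as background for the definition of the generic cluster character and of exchange triangles, so there is no in-paper argument to measure your proposal against. Your outline is a faithful summary of the strategy of the cited source: part (b) is reduced, via the additivity of the monomial weights, to the identity $\chi(\Gr_e(E))+\chi(\Gr_e(E'))=\sum_{f+g=e}\chi(\Gr_f(L))\,\chi(\Gr_g(M))$, which Caldero and Keller prove by exactly the stratification of the two quiver Grassmannians along the canonical subobjects that you describe, with the one-dimensionality of $\Ext^1_{\mathcal{C}_Q}(L,M)$ controlling the exceptional locus where the two sides must be compared; and part (a) is deduced by induction along mutations of cluster-tilting objects (using the Buan--Marsh--Reiten compatibility of the quiver of $\End_{\mathcal{C}_Q}(T)^{\mathrm{op}}$ with Fomin--Zelevinsky matrix mutation, which you correctly flag as an input) together with the denominator theorem for injectivity. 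You also correctly locate the genuine difficulty in the Euler-characteristic count on the fibres of the stratification, which is where the bulk of \cite{CalderoKeller08} is spent; as a citation-level justification of the statement as used in this paper, your account is accurate and complete.
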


Let $L$ and $M$ be
two indecomposable objects in the cluster category such that
$\ext^1_{\mathcal{C}_Q} (M,L)$ is one dimensional. If both $L$ and
$M$ are rigid, then so are $E$ and $E'$,  and the sequence
(\ref{eq:acyclic-main-thm}) is an exchange relation of the cluster
algebra $\mathcal{A}_Q$. For this reason, in this case, we call the
triangles in (\ref{acyclic-main-thm}) {\em exchange triangles}. If $L$
or $M$ is not rigid, we call them {\em generalized exchange triangles}.

For all dimension vectors $d$ the cluster character is a constructible function on $\rep_d Q$. Hence it takes a constant value $X_d$ on an open subset of $\rep_d Q$. We call  $X_d$ the \emph{ generic cluster character} of $d$. The generic cluster characters have been conjectured to be a basis of the cluster algebra, called the generic basis (we refer to \cite{CerulliLabardiniSchroer} Section 1.2 for further details on this conjecture).  

\subsection{Generalized Schur roots and generic decompositions}\label{generalized decomposition} 

Let $$e_1, \ldots, e_n$$ be the standard basis of $\Z^n$. 
It will be useful to consider  also the \emph{negative Schur roots}, these are the elements $-e_i$, for $i\in\Q_0$.  
Indeed all indecomposable objects in the Cluster category $\mathcal{C}_Q$  are isomorphic to either a stalk complex of an indecomposable representation of $Q$ or to $\Sigma P_i$:  
we will interpret the negative Schur root $-e_i$ as the dimension vector of $\Sigma P_i$. 

An alternative point of view on negative Schur roots is given by  the decorated representations introduced in  \cite{DerksenWeymanZelevinsky}. 
Decorated representations yield a combinatorial construction  of the representations associated to negative Schur roots. In this article we rely instead on the categorical setup of cluster categories that was described in the previous paragraph.

Negative Schur roots allow us to define generic decompositions for any dimension vector with integer values. They are real Schur roots, as $\Sigma P_i$ has no self-extensions in the Cluster category. 
 We define a negative Schur root $-e_i$ to be ext-orthogonal to a positive Schur root $d$ if there is an $M \in \rep_d Q$ such that $\ext^1_{\mathcal{C}_Q}(M,\Sigma P_i)$ vanishes. This is the case if and only if $d_i$ vanishes. As $\ext^1_{\mathcal{C}_Q}(\Sigma P_i, \Sigma P_j)$ and $\ext^1_{\mathcal{C}_Q}(\Sigma P_j, \Sigma P_i)$ vanish for all $i\not =j$, all negative Schur roots are pairwise ext-orthogonal. 

We say that a general dimension vector $d: Q_0 \to \Z$ has a generic decomposition $d= d_1\oplus d_2 \oplus  \cdots \oplus d_s$ into generalized Schur roots $d_i$ if the $d_i$ are pairwise ext-orthogonal. A generic decomposition of $d$ always exists   and it is unique. If $d$ is non negative, then it coincides with Kac's generic decomposition. 

\subsection{Component cluster} Let $\Lambda$ be a basic algebra. 
Component clusters for $\Lambda$ have first been introduced in section 6 of \cite{CerulliLabardiniSchroer}. They are maximal collections of indecomposable strongly reduced components of the representation variety of $\Lambda$ with pairwise vanishing generic extensions. We refer to \cite{CerulliLabardiniSchroer} for additional details.

In the case where $\Lambda=kQ$ is the path algebra of $Q$, the representation varieties $\rep_v Q$, for a fixed dimension vector  $v$, are vector spaces and thus in particular   irreducible. Furthermore they are  strongly reduced. Also by Kac's generic decomposition Theorem $\rep_v Q$ is indecomposable if and only if $v$ is a Schur root. 

Hence, we can give an alternative definition of component clusters for 
$\Lambda = kQ$ in terms of roots. The \emph{component graph} of $Q$ is a graph with vertices corresponding to the Schur roots and arrows connecting two Schur roots $b$ and $d$ if and only if $b \not =d$ and they are ext-orthogonal.  The maximal complete subgraphs are called \emph{component clusters}. 

The component clusters consisting only of real Schur roots correspond to the classical clusters and are in bijection with the cluster-tilting objects of the cluster category. These objects have been studied extensively in the context of categorification of cluster algebras. In fact the cluster character establishes a bijection between the cluster-tilting objects and the clusters of $\mathcal{A}_Q$ (see Theorem \ref{acyclic-main-thm}).  We know by \cite{HappelUnger05a} that these component clusters have size $n$. Their mutations can be entirely  described using cluster combinatorics (see \cite{CalderoKeller08}).

In this paper we will consider all component clusters, and not just the component clusters corresponding to cluster-tilting objects: we will study their  structure, we will calculate their size, and we will provide an interpretation of their mutations.

\section{Component cluster for affine quivers}
We first determine the size and composition of component clusters of affine quivers. 
We refer to \cite{CrawleyBoevey92} for an introduction to the representation theory of affine quivers. 
The roots of affine quivers are either real or isotropic. Let $\delta$ denote the smallest  positive isotropic root. All other isotropic roots are $\Z$-multiples of $\delta$.

Considering the Auslander-Reiten component of $Q$ gives a natural classification of indecomposable representations in three types:
\begin{itemize}
\item The \emph{preprojective component} consists of $\tau^{-1}$-orbits of projective indecomposable modules. The roots associated to these representations satisfy $\langle -, \delta\rangle < 0$ and are real Schur roots. 

\item The \emph{ preinjective component} consists of $\tau$-orbits of injective indecomposable modules. The roots associated to these representations satisfy $\langle        -, \delta\rangle >0$ and are also real Schur roots. 

\item Finally the third type of representations are the \emph{regular} indecomposable modules appearing in tubes. They are $\tau$-periodic representations and the associated roots satisfy $\langle        -,\delta\rangle=0$.
\end{itemize}

We distinguish two types of tubes in the Auslander-Reiten quiver: the \emph{exceptional ones}, which are of size greater one and form a finite set,   and the \emph{homogenous tubes}, which are parametrized by the projective line.  
By \cite{CrawleyBoevey92} an indecomposable regular representation is Schurian if and only if its dimension vector is smaller or equal to $\delta$ (that is, all the entries of the dimension vector are smaller or equal to the entries of $\delta$).  

These are exactly the dimension vectors of the regular representations that lie in the first $p$ rows of a tube of rank $p$. The dimension vectors of the regular representations in the first $p-1$ rows 
are real Schur roots. The dimension vector of the regular representation in the row $p$ is always the isotropic root $\delta$. 
Hence there are infinitely many isomorphism classes of indecomposable modules with dimension vector $\delta$. It follows that $hom(\delta, \delta)$ vanishes and as a consequence the generic extension $ext(\delta, \delta)$ vanishes, even though every indecomposable module of dimension vector $\delta$ has non-vanishing self-extension.

The additive category of regular modules appearing in one tube is abelian and closed under extensions. Its simple objects are called \emph{regular simple} and the number of isomorphism classes of regular simple modules equals the rank of the tube. The indecomposable regular modules are uniserial with respect to the regular simple modules appearing in the same tube. The maximal rigid objects in the exceptional tubes have been described by Buan and Krause in \cite{BuanKrause} and \cite{BuanKrause1}. 
Corollary  3.8 of \cite{BuanKrause} and Corollary 2.4 and Theorem 5.2 of \cite{BuanKrause1} imply the next result. 
\begin{theo}\label{max-rigid-tube}
A maximal basic rigid object in a tube of rank $p$ has $p-1$ pairwise non-isomorphic indecomposable direct summands, each of which has at most $p-1$ regular simples in its regular composition series.
\end{theo}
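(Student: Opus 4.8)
The plan is to reduce the statement to the internal combinatorics of a single standard stable tube and then to invoke the classification of rigid objects in tubes due to Buan and Krause. Recall that a tube $\mathcal{T}$ of rank $p$ is an abelian uniserial category with exactly $p$ regular simple objects $R_{1},\dots,R_{p}$, which are cyclically permuted by the Auslander-Reiten translation $\tau$; every indecomposable object of $\mathcal{T}$ is determined up to isomorphism by its regular socle $R_{i}$ and its regular length $\ell\ge 1$, and the regular composition factors of such an object are $R_{i},R_{i+1},\dots,R_{i+\ell-1}$ (indices modulo $p$). Since $\mathcal{T}$ has neither non-zero projective nor non-zero injective objects, Auslander-Reiten duality inside $\mathcal{T}$ takes the simple form $\ext^{1}(M,N)\cong D\Hom(N,\tau M)$, and these spaces coincide with the corresponding $\Hom$ and $\ext^{1}$ spaces over $kQ$ because $\mathcal{T}$ is closed under extensions. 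Under this dictionary the theorem splits into two assertions: a rigid indecomposable object of $\mathcal{T}$ has regular length at most $p-1$, and every maximal basic rigid object of $\mathcal{T}$ has exactly $p-1$ pairwise non-isomorphic indecomposable direct summands.

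The first assertion is the easy half. If $M$ has regular socle $R_{i}$ and regular length $\ell\ge p$, then every regular simple occurs among the composition factors of $M$, and a short computation with the uniserial structure shows that $\Hom(M,\tau M)\ne 0$ (one exhibits a module that is simultaneously a quotient of $M$ and a submodule of $\tau M$); hence $\ext^{1}(M,M)\ne 0$ by the formula above. Thus a rigid indecomposable has regular length at most $p-1$, that is, at most $p-1$ regular simples in its regular composition series, which is the second statement of the theorem; this is Corollary~2.4 of \cite{BuanKrause1}.

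For the count, one inequality can be made explicit. Fix a regular simple $R_{1}$ and let $M_{\ell}$ denote the indecomposable with regular socle $R_{1}$ and regular length $\ell$. Using the Hom/Ext formula one checks that the ``ray'' $T=\bigoplus_{\ell=1}^{p-1}M_{\ell}$ is rigid and that no rigid indecomposable of $\mathcal{T}$ is ext-orthogonal to all of $M_{1},\dots,M_{p-1}$, so $T$ is a maximal basic rigid object with exactly $p-1$ summands. The remaining point, and the main obstacle, is that \emph{every} maximal basic rigid object has this same size: one must exclude rigid objects with more than $p-1$ summands and, more delicately, show that a rigid object with fewer than $p-1$ summands can always be enlarged, so that it is not maximal. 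This is exactly where the cited results enter: Corollary~3.8 of \cite{BuanKrause} together with Corollary~2.4 and Theorem~5.2 of \cite{BuanKrause1} describe all maximal rigid objects of a tube of rank $p$ and, in particular, show that any two of them are connected by a finite chain of mutations, each replacing a single indecomposable summand by another and hence preserving the total number of summands; combined with the explicit ray example this pins that number down to $p-1$. I expect the genuinely substantial input to be this last step — that the number of indecomposable summands is a well-defined invariant of all maximal rigid objects in the tube, rather than merely an upper bound attained by one of them.
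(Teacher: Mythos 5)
Your proposal is correct and follows essentially the same route as the paper: the paper gives no independent argument but simply deduces the statement from Corollary~3.8 of \cite{BuanKrause} together with Corollary~2.4 and Theorem~5.2 of \cite{BuanKrause1}, which is exactly where you place the substantial input (the count of summands of an arbitrary maximal rigid object). The extra detail you supply --- the length bound via $\ext^1(M,M)\cong D\Hom(M,\tau M)$ in the tube and the explicit ray $\bigoplus_{\ell=1}^{p-1}M_\ell$ realizing the bound --- is sound and goes slightly beyond what the paper writes down.
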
 
We determine next which Schur roots appearing in tubes are ext-orthogonal. We say that a Schur root belongs to a tube, if it is the dimension vector of a regular representation. 
\begin{lemma}\label{gen-ext-tube}
Two Schur roots belonging to different tubes are ext-orthogonal. 
The isotropic Schur root $\delta$ is ext-orthogonal to a Schur root $\alpha$ if and only if $\alpha$ is regular. 
\end{lemma}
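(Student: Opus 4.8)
The plan is to handle the two assertions separately, with two complementary techniques. To prove that two Schur roots are ext-orthogonal it suffices — since the generic extension numbers $ext(-,-)$ are by definition \emph{minimal} values of $\dim\ext^1_Q$ — to exhibit one pair of modules on which both extension spaces vanish. To prove that two Schur roots are \emph{not} ext-orthogonal one cannot argue module-by-module, so instead I will bound a generic extension from below using the Euler form. I will use freely the structure of the module category of a tame hereditary algebra as in \cite{CrawleyBoevey92}: the regular modules form an abelian subcategory which decomposes as the direct sum, over the tubes, of the abelian subcategories of modules lying in each tube, so there are no nonzero homomorphisms between regular modules in different tubes; every tube is stable under $\tau$; and for each $p\ge 1$ the regular indecomposable of quasi-length $p$ in a tube of rank $p$ has dimension vector $\delta$ — in particular $\delta$ is the dimension vector of the regular simple module of each homogeneous tube.

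For the first assertion, let $\alpha$ and $\beta$ be Schur roots with regular indecomposable representatives $M$ and $N$ lying in tubes $\mathcal{T}$ and $\mathcal{T}'$ respectively, with $\mathcal{T}\neq\mathcal{T}'$; such representatives exist by the discussion preceding the lemma. Since $M$ and $N$ lie in different tubes, $\Hom_Q(M,N)=\Hom_Q(N,M)=0$, and since $\tau M$ again lies in $\mathcal{T}$, Auslander--Reiten duality identifies $\ext^1_Q(M,N)$ with the $k$-dual of $\Hom_Q(N,\tau M)=0$ (no nonzero map $N\to\tau M$ can factor through an injective module, both being regular), so $\ext^1_Q(M,N)=0$, and symmetrically $\ext^1_Q(N,M)=0$. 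Hence $ext(\alpha,\beta)=ext(\beta,\alpha)=0$, so $\alpha$ and $\beta$ are ext-orthogonal. The ``if'' part of the second assertion then follows at once: given a regular Schur root $\alpha$, with a regular indecomposable representative in a tube $\mathcal{T}$, choose a homogeneous tube different from $\mathcal{T}$ — possible since there are infinitely many homogeneous tubes — realise $\delta$ by its regular simple module, and run the same argument.

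For the ``only if'' part I will show that if $\alpha$ is a Schur root that is not regular then $\delta$ and $\alpha$ are not ext-orthogonal. Since every root of an affine quiver is real or isotropic and the only isotropic Schur root is $\delta$ (which is regular), such an $\alpha$ is a real Schur root lying in the preprojective or the preinjective component, so by the trichotomy recalled above $\langle\alpha,\delta\rangle\neq 0$ (that is, the defect of $\alpha$ is nonzero); as $\delta$ spans the radical of the symmetrised Euler form, also $\langle\delta,\alpha\rangle=-\langle\alpha,\delta\rangle\neq 0$. The identity $\langle a,b\rangle=hom(a,b)-ext(a,b)$ together with $hom\geq 0$ then gives $ext(\delta,\alpha)\geq-\langle\delta,\alpha\rangle$ and $ext(\alpha,\delta)\geq-\langle\alpha,\delta\rangle$; since these two lower bounds are negatives of one another and nonzero, one of them is positive, hence so is one of $ext(\delta,\alpha)$, $ext(\alpha,\delta)$. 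Therefore $\delta$ and $\alpha$ are not ext-orthogonal, and together with the previous paragraph this proves the lemma.

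The representation-theoretic inputs — vanishing of $\Hom$ and $\ext^1$ between distinct tubes, $\tau$-stability of tubes, and the descriptions of the regular indecomposables realising $\alpha$ and $\delta$ — are standard and either recalled above or citable from \cite{CrawleyBoevey92}. The step I expect to be the real obstacle is the last one: there one must \emph{exclude} the vanishing of a generic extension, and since that number is an infimum over the representation variety it cannot be detected on any single module. The way around this is the uniform lower bound $ext(a,b)\ge-\langle a,b\rangle$, valid for all $a,b$, which turns the (nonzero) defect of $\alpha$ into the required failure of ext-orthogonality.
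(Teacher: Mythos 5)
Your proof is correct and follows essentially the same route as the paper: vanishing of $\Hom$ and $\ext^1$ between distinct tubes (which the paper simply quotes as well known and you derive from Auslander--Reiten duality), a Schurian representative of $\delta$ in a homogeneous tube for the ``if'' direction, and the nonvanishing defect $\langle\alpha,\delta\rangle$ combined with $\langle a,b\rangle = hom(a,b)-ext(a,b)$ for the ``only if'' direction. No gaps.
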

\begin{proof}It is well-known that two indecomposable regular representations $A$ and $B$ lying in different tubes have no extension. 
As there exists a Schurian representation of dimension vector $\delta$ that does not appear in an exceptional tube, $\delta$ is ext-orthogonal to all regular roots. 

Let $d$ be a preinjective or preprojective root. In the preinjective case $\langle d, \delta\rangle $ is negative and in the preprojective case $\langle  \delta,d \rangle$ is negative. It follows that either $ext(d, \delta)$ or $ext(\delta, d)$ is non-zero. 
\end{proof}
We can now determine the component clusters. 
\begin{theo}\label{affine}
The component clusters are either of size $n$ or of size $n-1$. They are of size $n-1$ if and only if they contain $\delta$. 
\end{theo}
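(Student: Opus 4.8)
The plan is to split into two cases, according to whether the positive isotropic Schur root $\delta$ belongs to the component cluster $\mathcal{T}$.

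Suppose first that $\delta \notin \mathcal{T}$. Every root of an affine quiver is either real or a multiple $m\delta$, and among the multiples only $\delta$ itself is a Schur root, by the description of Schurian regular representations recalled above. Hence every element of $\mathcal{T}$ is a real Schur root, so $\mathcal{T}$ is a component cluster all of whose elements are real Schur roots; such component clusters are the classical clusters and have size $n$ by \cite{HappelUnger05a}. Thus $|\mathcal{T}| = n$.

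Now suppose $\delta \in \mathcal{T}$ and set $\mathcal{R} = \mathcal{T} \setminus \{\delta\}$. By Lemma \ref{gen-ext-tube} every element of $\mathcal{R}$ is ext-orthogonal to $\delta$ and is therefore a regular Schur root; being distinct from $\delta$, it belongs to the first $p-1$ rows of a unique exceptional tube of rank $p$. Since Schur roots belonging to different tubes are ext-orthogonal (Lemma \ref{gen-ext-tube}), $\mathcal{R}$ is the disjoint union of its subsets $\mathcal{R}_j$ of roots belonging to the exceptional tube $j$; homogeneous tubes contribute nothing, as $\delta$ is the only Schur root belonging to them. Each regular Schur root has a unique rigid Schurian representative, generic in its representation variety, so the roots of $\mathcal{R}_j$ form, by taking the direct sum of these representatives, a basic rigid object of the tube $j$ with exactly $|\mathcal{R}_j|$ indecomposable summands. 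Maximality of $\mathcal{T}$ forces this to be a \emph{maximal} basic rigid object of tube $j$: a regular Schur root in tube $j$ that is ext-orthogonal to all of $\mathcal{R}_j$ would automatically be ext-orthogonal to $\delta$ and to all Schur roots in the remaining tubes, hence could be adjoined to $\mathcal{T}$. Theorem \ref{max-rigid-tube} therefore gives $|\mathcal{R}_j| = p_j - 1$, where $p_j$ is the rank of tube $j$. Summing over the exceptional tubes and using the classical identity $\sum_j (p_j - 1) = n - 2$ valid for affine quivers (see \cite{CrawleyBoevey92}), we obtain $|\mathcal{R}| = n - 2$, hence $|\mathcal{T}| = n - 1$.

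Together the two cases show that every component cluster has size $n$ or $n-1$, and that size $n-1$ occurs precisely when the component cluster contains $\delta$, since $\delta \notin \mathcal{T}$ forces $|\mathcal{T}| = n$. The step I expect to be the main obstacle is the second case: one must check carefully that maximality of the component cluster restricts to maximality of the associated rigid object \emph{inside each individual tube}, so that the Buan--Krause result (Theorem \ref{max-rigid-tube}) can be applied tube by tube, and one must invoke the classical identity $\sum_j(p_j - 1) = n - 2$; by contrast, the first case reduces immediately to the known cardinality of classical clusters.
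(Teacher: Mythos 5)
Your proof is correct and follows essentially the same route as the paper: the case $\delta\notin\mathcal{T}$ reduces to classical clusters having size $n$, and the case $\delta\in\mathcal{T}$ combines Lemma \ref{gen-ext-tube}, Theorem \ref{max-rigid-tube} applied tube by tube, and the identity $\sum_j(p_j-1)=n-2$. The only difference is that you spell out more explicitly why maximality of the component cluster forces maximality of the rigid object inside each exceptional tube, a point the paper leaves implicit.
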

\begin{proof}
If $\delta$ is not contained in a component cluster, then the component cluster corresponds to a cluster-tilting object, hence it is of size $n$.
Suppose now that $\delta$ is contained in the component cluster. Then all other Schur roots in the component cluster belong to tubes and are real. In a tube of rank $p>1$ the maximal number of pairwise ext-orthogonal real Schur roots is $p-1$ by \ref{max-rigid-tube}. By Lemma \ref{gen-ext-tube} all Schur roots appearing in different tubes are ext-orthogonal. So a component cluster containing $\delta$ will also contain $p-1$ Schur roots for each exceptional tube of the Auslander-Reiten quiver. As the sum over all ranks minus 1 is equal to $n-2$ by \cite{CrawleyBoevey92}, the component clusters containing $\delta$ are of size $n-1$.
\end{proof}

Note that, as there are only finitely many regular Schur roots, there are only finitely many component clusters of size $n-1$ but infinitely many component clusters of size $n$. 

\begin{lemma} The $\Z$--span of  Schur roots appearing in a component cluster of size $n-1$ form a pure sub lattice of $\Z^n$  of rank $n-1$.  
\end{lemma}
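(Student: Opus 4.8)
The plan is to identify the $\Z$-span of a size-$(n-1)$ component cluster $C$ with the sublattice $N:=\{x\in\Z^n : \langle x,\delta\rangle=0\}$ of defect-zero vectors, and then observe that $N$ is pure of rank $n-1$. Write $C=\{\delta\}\sqcup\bigsqcup_i R_i$, where $i$ ranges over the exceptional tubes $T_i$ (of rank $p_i$) and $R_i$ is the set of the $p_i-1$ dimension vectors of the indecomposable summands of a maximal basic rigid object of $T_i$, so that $\sum_i(p_i-1)=n-2$, exactly as in the proof of Theorem~\ref{affine}. Since $\langle\delta,\delta\rangle=q(\delta)=0$ and every regular root has defect zero, we get $C\subseteq N$, hence $\Z C\subseteq N$. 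The linear form $x\mapsto\langle x,\delta\rangle$ is nonzero, being negative on preprojective roots, so $N$ is the kernel of a nonzero homomorphism $\Z^n\to\Z$; thus $\Z^n/N$ embeds in $\Z$, is torsion-free, and $N$ is a pure sublattice of rank $n-1$. It therefore suffices to prove $\Z C=N$.

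Because $\delta$ is a primitive vector of $\Z^n$ (the null root of the affine type) and $\Z\delta\subseteq N$ with $N$ pure, the quotient $N/\Z\delta$ is free of rank $n-2$; as $\delta\in C$, we are reduced to showing that the images $\overline{R_i}$ generate $N/\Z\delta$. Fix one exceptional tube $T$ of rank $p$, with regular simples $S_1,\dots,S_p$ in cyclic order, whose classes in $\Z^n$ satisfy $\sum_j[S_j]=\delta$. I claim that for \emph{every} maximal basic rigid object $R$ of $T$ one has $\Z\langle R\rangle+\Z\delta=\Lambda_T:=\Z\langle[S_1],\dots,[S_p]\rangle$. Indeed, maximal basic rigid objects of $T$ are connected by mutations (Buan--Krause), and a single mutation replaces a summand $M$ by $M^{*}$ through short exact sequences $0\to M\to B\to M^{*}\to 0$ and $0\to M^{*}\to B'\to M\to 0$ with $B,B'$ in the additive hull of the common complement; hence $\dim M+\dim M^{*}=\dim B$ lies in the lattice spanned by the remaining summands, so $\Z\langle R\rangle+\Z\delta$ is invariant under mutation and is the same for all $R$. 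Evaluating it on the explicit maximal rigid object given by the uniserial modules $N_1,\dots,N_{p-1}$ with common regular top $S_1$ and regular lengths $1,\dots,p-1$ — these are pairwise ext-orthogonal in $\mathcal{C}_Q$ since by the Auslander--Reiten formula this amounts to $\Hom_{kQ}(N_b,\tau N_a)=0=\Hom_{kQ}(N_a,\tau N_b)$, which holds because each $N_a$ has regular top $S_1$ whereas each $\tau N_b$ has its regular subquotient tops among the simples occurring in a window of length $\le p-1$ not containing $S_1$ — one finds that the vectors $\dim N_k$ are the partial sums running over all the $[S_j]$ except a single one, so $\Z\langle R\rangle+\Z\delta=\Lambda_T$ once $\delta$ is adjoined to recover the missing simple class. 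Passing to $N/\Z\delta$, the image of $R_i$ generates the free rank-$(p_i-1)$ submodule $L_i:=\Lambda_{T_i}/\Z\delta$.

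It remains to check that $\sum_i L_i=N/\Z\delta$. Regular modules lying in distinct tubes have vanishing $\Hom$ and $\ext^1$ in both directions, so the $L_i$ are pairwise orthogonal for the symmetrized Euler form, which restricts to a nondegenerate (type $A_{p_i-1}$) form on each $L_i$; hence $\sum_i L_i=\bigoplus_i L_i$ has rank $\sum_i(p_i-1)=n-2$, i.e.\ finite index in $N/\Z\delta$. To see the index is $1$, I invoke the classical description of the root system of an affine quiver (see \cite{CrawleyBoevey92}): the lattice $N$ of defect-zero vectors is generated by $\delta$ together with the real regular roots, so $N/\Z\delta$ is generated by the images of the $R_i$. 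Combining, $\Z C=\Z\delta+\sum_i\Z\langle R_i\rangle$ surjects onto $\bigoplus_i L_i=N/\Z\delta$ and contains $\delta$, whence $\Z C=N$, a pure sublattice of $\Z^n$ of rank $n-1$. I expect this last gluing step to be the main obstacle: matching the lattice generated by the tube-wise maximal rigid objects with the full kernel of the defect form rests on the structure theory of regular representations of tame hereditary algebras rather than on anything specific to component clusters.
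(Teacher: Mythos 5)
Your reduction of the lemma to the equality $\Z C=N$, where $N=\ker\langle -,\delta\rangle$, is sound, and everything up to the finite-index claim checks out: $C\subseteq N$, $N$ is pure of rank $n-1$ as the kernel of a nonzero form to $\Z$, the lattice spanned by a maximal rigid object of a tube together with $\delta$ is mutation-invariant and equals $\Lambda_T$ on the explicit wing modules $N_1,\dots,N_{p-1}$, and the tube contributions are independent by orthogonality for the symmetrized form. (The attribution of mutation-connectedness of maximal rigid objects in a tube to Buan--Krause is loose, but the fact follows from the compatible-interval model the paper itself sets up later, so I do not count it as a gap.) The genuine gap is exactly where you predicted it: the assertion that $N$ is generated over $\Z$ by $\delta$ together with the real regular roots. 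This is not a formal consequence of ``the classical description of the root system'': what is classical is that the real regular roots form a root subsystem of type $\prod_i A_{p_i-1}$ whose rank $\sum_i(p_i-1)=n-2$ equals the rank of $N/\Z\delta$, and a full-rank root subsystem can generate a \emph{proper} finite-index sublattice in general (for instance $A_1^{\oplus 4}$ sits inside the $D_4$ root lattice with index $2$). So your argument delivers only what you already had -- finite index -- while the lemma is precisely the statement that the index is $1$; the decisive step is deferred to a citation, and \cite{CrawleyBoevey92} does not state the integral version. The fact is true (it amounts to the classes of the regular simples, one omitted per exceptional tube, together with $\delta$ and $[P_e]$ for an extending vertex forming a $\Z$-basis of $\Z^n$), but it requires an argument, e.g.\ a unimodularity computation or the tilting-module basis trick.

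For comparison, the paper's proof sidesteps the regular lattice entirely. It completes $\alpha_1,\dots,\alpha_{n-2}$ by a preprojective root $\dim\tau^{-l}P_e$ with $e$ an extending vertex (cf.\ Lemma \ref{extend}), observes that ext-orthogonality is equivalent to $\tau^{l+1}(\alpha_1+\dots+\alpha_{n-2})$ having no support at $e$, so that after applying the invertible Coxeter transformation $\tau^{l+1}$ the roots $\alpha_i$ land in the coordinate sublattice $\{x_e=0\}$, where they span a pure sublattice of rank $n-2$ because $n$ pairwise ext-orthogonal real Schur roots span all of $\Z^n$; adjoining $\delta=\tau^{l+1}\delta$, which has $\delta_e=1$, then gives a pure rank-$(n-1)$ sublattice, and one transports back by $\tau^{-(l+1)}\in GL_n(\Z)$. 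If you wish to keep your (more descriptive) identification of $\Z C$ with the defect-zero lattice, the cleanest repair is to borrow exactly this completion-by-a-preprojective step to manufacture the $\Z$-basis your last paragraph needs.
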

\begin{proof}
Given $n$ pairwise ext-orthogonal real Schur roots, then their $\Z$--span is the entire lattice $\Z^n$. If $\delta, \alpha_1, \cdots, \alpha_{n-2}$ is a component cluster, then there is a 
dimension vector of a representation $\tau^{-l} P_e$ which is ext-orthogonal to $\alpha_1,\cdots,  \alpha_{n-2}$. 
This is equivalent to the fact that $\tau^{l+1}(\alpha_1+ \cdots +\alpha_{n-2})$ does not have support in $e$. Hence the $\Z$--span of $\tau^{l+1} \alpha_1, \cdots, \tau^{l+1} \alpha_{n-2}$ is the lattice ${0} \times U$, where $U$ is a pure sub lattice of rank $n-2$. It follows that $\tau^{l+1} \delta= \delta, \tau^{l+1} \alpha_1, \cdots, \tau^{l+1} \alpha_{n-2}$
span a pure sub lattice of rank $n-1$. As $\tau$ is a bijective integral linear form on $\Z^n$, the $\Z$-span of $\delta, \alpha_1, \cdots, \alpha_{n-2}$  is  a pure sub lattice of rank $n-1$.

\end{proof}

\section{Component cluster for wild quiver}
In this section we obtain an optimal bound for the maximal number of imaginary Schur roots appearing in a component cluster. 

The \emph{fundamental domain} $$\mathcal{F}:=\{ d\in \Z^m| (d, e_i)\le 0 \mbox{ for all }i\in\{1,\cdots,m\} \mbox{ and } \supp(d) \mbox{ is connected} \}$$ is a subset of the positive imaginary roots. We call these roots \emph{fundamental}. The set of positive imaginary roots is given by the image of the Weyl group action on $\mathcal{F}$. 
Note that the symmetrized Euler form is invariant under the Weyl group $W$: that is
 $$( \alpha, \beta ) = ( w \alpha,  w\beta )$$ for all $w \in W$. The set of positive imaginary roots is invariant under the action of $W$, but the set of real roots is not. Indeed, if $\alpha$ is a real Schur root, $w \alpha$ will not be positive in general. Furthermore, the Weyl group action does not map Schur roots to Schur roots and does not preserve ext-orthogonality. 

\begin{lemma}\label{fun}
Let $\alpha$ be a fundamental root. Then either 
$\alpha$  is isotropic and $\alpha= \bigoplus_{i=1}^n \beta$, where $n \in \N$ and $\beta$ is an isotropic fundamental Schur root, or $\alpha$ is a  Schur root.  
\end{lemma}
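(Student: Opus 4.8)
The plan is to analyze a fundamental root $\alpha$ through its generic decomposition (Kac's Theorem) and to rule out all possibilities except the two listed. Write the generic decomposition $\alpha = \alpha_1 \oplus \cdots \oplus \alpha_s$ into pairwise ext-orthogonal Schur roots. If $s=1$, then $\alpha = \alpha_1$ is itself a Schur root and we are done; so assume $s \ge 2$ and aim to show this forces $\alpha$ to be isotropic and each $\alpha_i$ to be an isotropic fundamental Schur root. The key input is the relation $\langle a,b\rangle = hom(a,b) - ext(a,b)$ together with ext-orthogonality $ext(\alpha_i,\alpha_j)=0=ext(\alpha_j,\alpha_i)$ for $i\ne j$, which gives $\langle \alpha_i,\alpha_j\rangle = hom(\alpha_i,\alpha_j)\ge 0$ and symmetrically $\langle\alpha_j,\alpha_i\rangle\ge 0$, hence $(\alpha_i,\alpha_j)\ge 0$ for all $i\ne j$.

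First I would compute $q(\alpha) = \langle\alpha,\alpha\rangle = \sum_i q(\alpha_i) + \sum_{i<j}(\alpha_i,\alpha_j)$. Since $\alpha$ is a positive imaginary root, $q(\alpha)\le 0$; but the cross terms $(\alpha_i,\alpha_j)$ are all $\ge 0$ by the previous paragraph, and each $q(\alpha_i)\ge 0$ unless $\alpha_i$ is imaginary (recall $q\ge 1$ for real roots, $q=0$ for isotropic, $q\le 0$ for imaginary). I want to exploit the defining inequality of the fundamental domain, $(\alpha, e_i)\le 0$ for all $i$, i.e. $(\alpha, d)\le 0$ for every non-negative $d$, in particular $(\alpha,\alpha_j)\le 0$ for each $j$. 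Expanding $(\alpha,\alpha_j) = \sum_i (\alpha_i,\alpha_j) = 2q(\alpha_j) + \sum_{i\ne j}(\alpha_i,\alpha_j)$, and using $(\alpha_i,\alpha_j)\ge 0$, forces $2q(\alpha_j)\le 0$, so every summand $\alpha_j$ is isotropic or imaginary, and moreover $(\alpha_i,\alpha_j)=0$ for all $i\ne j$ and $q(\alpha_j)=0$ for all $j$; thus each $\alpha_j$ is an isotropic Schur root, the sum $\sum_{i<j}(\alpha_i,\alpha_j)$ vanishes, and $q(\alpha)=\sum_j q(\alpha_j)=0$, so $\alpha$ is isotropic. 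It then remains to see each $\alpha_j$ is itself \emph{fundamental}: since $(\alpha,\alpha_j)=0$ and $(\alpha_i,\alpha_j)=0$ for $i\ne j$, we get $(\alpha_j,\alpha_j)=0$ hence $(\alpha_j, e_i)\le 0$ would need to be checked; here one uses that an isotropic Schur root lies in the $W$-orbit of the fundamental domain and, having connected support inherited from $\supp(\alpha)$ (all $\alpha_i$ have support inside $\supp(\alpha)$, and ext-orthogonality of the pieces summing to a connected root forces each piece to be supported on $\delta$-type data), it must in fact be fundamental; more directly, an isotropic Schur root which is a direct summand of a fundamental isotropic root is itself fundamental by the characterization of isotropic roots via the null root $\delta$ of an affine subquiver. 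Finally, all the $\alpha_j$ are equal: two distinct isotropic Schur roots are generically each $\tau$-periodic in disjoint tubes, and their sum fails to be fundamental unless they coincide — so $\alpha = \alpha_1 \oplus \cdots \oplus \alpha_1 = \bigoplus_{i=1}^{n}\beta$ with $\beta:=\alpha_1$.

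The main obstacle I anticipate is the last step: showing that the isotropic Schur summands are all \emph{equal} and each is fundamental, rather than merely isotropic. The inequality arguments above cleanly force each $\alpha_j$ isotropic and pairwise $(\alpha_i,\alpha_j)=0$, but promoting "isotropic Schur root in $\supp(\alpha)$" to "fundamental isotropic Schur root, and all equal to a common $\beta$" requires understanding the structure of isotropic roots: an isotropic Schur root spans the radical of the restricted Tits form on a uniquely determined affine (tame) subquiver, and a fundamental isotropic root is a positive multiple of such a null root with connected support; ext-orthogonality of several isotropic Schur roots whose sum is fundamental pins down that they all share the same supporting affine subquiver, hence are the same null root $\beta$. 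I would carry this out by restricting the symmetrized Euler form to $\supp(\alpha)$, invoking the classification of its radical, and citing the description of isotropic roots in \cite{CrawleyBoevey92}; once the common null root $\beta$ is identified, $\alpha = \bigoplus \beta$ is immediate from the decomposition. If this structural argument is cleaner to phrase via the cluster-category / regular-tube picture of Section 2, I would instead deduce it from the fact (Lemma \ref{gen-ext-tube}) that an isotropic Schur root is ext-orthogonal only to regular roots, and that a fundamental root restricted to an affine subquiver is a multiple of $\delta$.
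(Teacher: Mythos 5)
There is a genuine gap at the decisive step of your inequality argument. From ext-orthogonality you correctly get $(\alpha_i,\alpha_j)\ge 0$ for $i\ne j$, and from fundamentality you correctly get $(\alpha,\alpha_j)=2q(\alpha_j)+\sum_{i\ne j}(\alpha_i,\alpha_j)\le 0$; but this only yields $q(\alpha_j)\le 0$, i.e.\ that every summand of the generic decomposition is imaginary. Your ``moreover $(\alpha_i,\alpha_j)=0$ for all $i\ne j$ and $q(\alpha_j)=0$ for all $j$'' does not follow: a strictly negative $q(\alpha_j)$ can absorb positive cross terms while keeping $(\alpha,\alpha_j)\le 0$. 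So the Euler-form bookkeeping alone does not rule out a fundamental non-isotropic root decomposing into several strictly imaginary Schur roots, which is exactly the nontrivial content of the lemma. The paper closes this by citing Theorem 6.2 of \cite{Schofield}: if $\alpha$ is neither a Schur root nor isotropic, its canonical decomposition contains a \emph{real} Schur root $\beta$ with $(\alpha,\beta)>0$, contradicting $(\alpha,d)\le 0$ for all $d\ge 0$. Some such structural input about canonical decompositions is needed; your purely formal argument cannot replace it.

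The isotropic case is also handled more heavily than necessary. The paper simply observes that a fundamental isotropic root has affine support, hence is a multiple $n\delta$ of the unique non-divisible positive isotropic root of that affine subquiver, which is a fundamental Schur root. Your route---first producing isotropic Schur summands, then arguing via tubes and radicals of restricted Tits forms that they must all coincide and be fundamental---is the obstacle you yourself flag, and as written it is not a proof (two distinct ext-orthogonal isotropic Schur roots on a wild quiver need not live in ``disjoint tubes'' in any sense you have established). I would restructure: treat $q(\alpha)=0$ and $q(\alpha)<0$ as separate cases from the start, dispatch the first by the affine-support fact, and dispatch the second by Schofield's theorem rather than by analyzing the generic decomposition directly.
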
 
\begin{proof}
If $\alpha$ is isotropic and fundamental, then its support is an affine quiver. As every affine quiver has a unique positive isotropic non-divisible positive root $\beta$, we have $\alpha=\bigoplus_{i=1}^n \beta$ for some $n \in \N$. 
Suppose that $\alpha$ is not a Schur root and is not isotropic, then by  Theorem 6.2 of \cite{Schofield}  it contains at least one real Schur root $\beta$ in its decomposition, and $(\alpha, \beta)$ is positive. But this is a contradiction to the fact that $\alpha$ is fundamental.  
\end{proof}

For any dimension vector $\alpha$, its \emph{null-cone} is give by 
$$N_{\alpha}:= \{ i \in Q_0| (e_i, \alpha)=0 \}.$$ 

We say that a dimension vector $\alpha$ is \emph{sincere} if all its entries are positive integers. 
\begin{lemma} Suppose that $\alpha$ lies in the fundamental domain and is sincere. Then either $Q$ is an affine quiver and $\alpha$ is isotropic, or the full sub-quiver on the set of vertices $N_{\alpha}$ is a union of Dynkin quivers. 
\end{lemma}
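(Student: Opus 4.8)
The plan is to reduce the statement to the classical characterization of Dynkin and Euclidean diagrams by subadditive functions: for a \emph{connected} quiver $C$ one has that $C$ is Dynkin if and only if its symmetrized Euler form $(-,-)_C$ is positive definite; that $C$ is affine if and only if $(-,-)_C$ is positive semidefinite with radical spanned by a sincere positive vector $\delta_C$; and that $C$ admits a sincere \emph{subadditive} function --- a sincere $f$ with $(e_i,f)_C\ge 0$ for all $i$ --- if and only if $C$ is Dynkin or affine, in which case $f$ is \emph{additive} (all the $(e_i,f)_C$ vanish) precisely when $C$ is affine. (This is Vinberg's lemma; see also Happel--Preiser--Ringel, or deduce it from the Perron--Frobenius theorem applied to the adjacency matrix of $C$.)

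First I would record the elementary consequences of the hypotheses: since $\alpha$ is sincere, $\supp(\alpha)=Q_0$, so $\alpha\in\mathcal{F}$ forces $Q$ to be connected, $\alpha>0$ with $(e_i,\alpha)\le 0$ for every vertex $i$, and $\alpha$ a fundamental root; in particular $2q(\alpha)=(\alpha,\alpha)=\sum_i\alpha_i(e_i,\alpha)\le 0$. If $q(\alpha)=0$, i.e.\ $\alpha$ is isotropic, then by the proof of Lemma~\ref{fun} the support of $\alpha$, which is all of $Q_0$, is an affine quiver, so $Q$ is affine, and $\alpha$ is isotropic by assumption --- the first alternative. (Equivalently, $q(\alpha)=0$ makes every summand $\alpha_i(e_i,\alpha)$ vanish, so $\alpha$ is a sincere additive function on the connected quiver $Q$, whence $Q$ is affine by the characterization above.) So in what follows I assume $q(\alpha)<0$ and must prove that the full subquiver of $Q$ on $N_\alpha$ is a union of Dynkin quivers.

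The heart of the argument is to fix a connected component $C$ of this full subquiver and show that $C$ is Dynkin. Writing $\alpha|_C$ for the restriction of $\alpha$ to $C$ (which is sincere on $C$), the point is that, since $C$ is a \emph{full} subquiver, passing from $Q$ to $C$ only deletes arrows joining a vertex of $C$ to a vertex outside $C$, and each such arrow contributes non-positively to $(e_i,-)$; thus for $i\in C$ one gets $(e_i,\alpha|_C)_C=(e_i,\alpha)+t_i$, where $t_i\ge 0$ is the sum of the $\alpha_j$ over all arrows joining $i$ to a vertex $j\notin C$. Since $i\in N_\alpha$ we have $(e_i,\alpha)=0$, hence $(e_i,\alpha|_C)_C=t_i\ge 0$ for all $i\in C$: the vector $\alpha|_C$ is a sincere subadditive function on the connected quiver $C$, so $C$ is Dynkin or affine. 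To exclude the affine case I would argue that if $C$ were affine then $\alpha|_C$ is additive, so $t_i=0$ for every $i\in C$; as $\alpha$ is sincere this means there is no arrow between $C$ and $Q_0\setminus C$, so $C=Q_0$ because $Q$ is connected, and then $\alpha=\alpha|_C$ is additive on $Q$, giving $q(\alpha)=\tfrac12\sum_i\alpha_i(e_i,\alpha)=0$ --- contrary to $q(\alpha)<0$. Hence each such $C$ is Dynkin, and the full subquiver on $N_\alpha$ is a disjoint union of Dynkin quivers.

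The only genuinely non-formal ingredient is the quoted characterization of Dynkin and Euclidean diagrams via subadditive functions; the rest is bookkeeping. I expect the delicate points to be: (i) establishing the identity $(e_i,\alpha|_C)_C=(e_i,\alpha)+t_i$ with $t_i\ge 0$, which uses that the subquiver on $N_\alpha$ is taken full (so that no arrow internal to $C$ is lost) and a correct count of arrow multiplicities; and (ii) invoking exactly the half of the characterization which says that a subadditive function on an affine quiver must be additive, in order to rule out an affine component. The isotropic case and the reduction to connected components are routine.
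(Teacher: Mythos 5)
The paper states this lemma without giving any proof, so there is nothing to compare your argument against; judged on its own, your proof is correct and complete. The decomposition into cases via the sign of $q(\alpha)$, the identity $(e_i,\alpha|_C)_C=(e_i,\alpha)+t_i$ with $t_i=\sum_{j\notin C}n_{ij}\alpha_j\ge 0$ for a \emph{full} connected component $C$ of the subquiver on $N_\alpha$, and the exclusion of an affine component (additivity forces $t_i=0$, hence $C=Q_0$ by connectedness of $Q$, hence $q(\alpha)=0$, contradicting $q(\alpha)<0$) are all sound; the only external input is the Vinberg/Happel--Preiser--Ringel characterization of Dynkin and affine diagrams by positive subadditive functions, which is standard, valid for multigraphs without loops (as here, since $Q$ is acyclic), and correctly invoked in both directions you need. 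One small remark: your treatment of the isotropic case via additivity of $\alpha$ on the connected quiver $Q$ is actually self-contained and cleaner than deferring to the (also unproved) claim inside Lemma~\ref{fun} that the support of a fundamental isotropic root is affine --- in fact your argument supplies a proof of that claim as well.
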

We have determined the component clusters of affine quivers in the previous section.

\begin{lemma}\label{hom}
Let us assume that either 
\begin{itemize}
\item  $\alpha$ and $\beta$ are two positive imaginary ext-orthogonal roots or 
\item  $\alpha$ is an imaginary fundamental root and $\beta$ is real such that $\alpha$ and $\beta$ are ext-orthogonal.  
\end{itemize}
Then $hom(\alpha, \beta)$, $hom(\beta, \alpha)$ and $(\alpha, \beta)$ vanish. 
\end{lemma}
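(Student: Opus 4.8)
The plan is to reduce the statement to the single inequality $(\alpha,\beta)\le 0$, and then to establish this inequality by moving $\alpha$ into the fundamental domain $\F$.

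For the reduction, I would start from the identity $\langle a,b\rangle = hom(a,b)-ext(a,b)$, which holds for all dimension vectors. Since in both cases $\alpha$ and $\beta$ are ext-orthogonal, the extension terms vanish, so $hom(\alpha,\beta)=\langle\alpha,\beta\rangle$ and $hom(\beta,\alpha)=\langle\beta,\alpha\rangle$; both are non-negative, being dimensions of Hom-spaces. Adding them gives $(\alpha,\beta)=\langle\alpha,\beta\rangle+\langle\beta,\alpha\rangle=hom(\alpha,\beta)+hom(\beta,\alpha)\ge 0$. Hence once we know $(\alpha,\beta)\le 0$ it follows that all three of $hom(\alpha,\beta)$, $hom(\beta,\alpha)$ and $(\alpha,\beta)$ vanish, which is the assertion. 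After this step ext-orthogonality plays no further role: the remaining work is purely about the symmetrized Euler form.

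To prove $(\alpha,\beta)\le 0$ I would treat the two cases separately. In the second case $\alpha$ already lies in $\F$, so $(\alpha,e_i)\le 0$ for every vertex $i\in Q_0$; writing the positive real root $\beta=\sum_i\beta_i e_i$ with all $\beta_i\ge 0$ and expanding bilinearly gives $(\alpha,\beta)=\sum_i\beta_i(\alpha,e_i)\le 0$. In the first case $\alpha$ is only assumed to be a positive imaginary root, so I would first transport it into $\F$: as the positive imaginary roots form the $W$-orbit of $\F$, choose $w\in W$ with $w^{-1}\alpha\in\F$, and apply $w^{-1}$ to $\beta$ as well. Because $W$ preserves the set of positive imaginary roots, $w^{-1}\beta$ is again a positive imaginary root, in particular a non-negative integer vector, so the computation of the second case applies: using the $W$-invariance of the symmetrized Euler form, $(\alpha,\beta)=(w^{-1}\alpha,w^{-1}\beta)=\sum_i(w^{-1}\beta)_i\,(w^{-1}\alpha,e_i)\le 0$.

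The main obstacle is this last transport step. One has to apply $w^{-1}$ to $\alpha$ and $\beta$ at the same time, and it is essential that $W$ sends positive imaginary roots to positive imaginary roots — a property that fails for real roots, as recalled in the paragraph before the lemma. This is precisely why the first case requires $\beta$ to be imaginary, whereas the second case can allow $\beta$ to be real but in return insists that $\alpha$ itself be fundamental. Beyond this, the argument is only bookkeeping with the Euler form.
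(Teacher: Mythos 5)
Your proposal is correct and follows essentially the same route as the paper: reduce to $(\alpha,\beta)\le 0$ via the identity $(\alpha,\beta)=hom(\alpha,\beta)+hom(\beta,\alpha)\ge 0$ under ext-orthogonality, then obtain the inequality directly from the definition of $\mathcal{F}$ in the fundamental case and by transporting $\alpha$ into $\mathcal{F}$ with a Weyl group element (using that $W$ preserves positive imaginary roots) in the first case. You merely spell out the bilinear expansion $(\alpha,\beta)=\sum_i\beta_i(\alpha,e_i)$ that the paper leaves implicit.
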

\begin{proof}
In the first case, we can consider a Weyl group element $w$ 
such that $w\alpha$ lies in the fundamental domain. Then $w\beta$ is a positive root and we have that $(\alpha, \beta)=(w \alpha, w \beta)\le 0$. 
In the second case, since $\alpha$ is fundamental, $(\alpha, \beta) \le 0$. As $\alpha $ and $\beta$ are ext-orthogonal, we also have
$0=(\alpha, \beta)= hom(\alpha, \beta) + hom(\beta, \alpha)$. 
\end{proof}
Note that the previous lemma does not hold if, in the second part, we replace the assumption that $\alpha$ is fundamental with the assumption that $\alpha$ is imaginary. 
\begin{lemma}\label{sup}
Let $\alpha$ and $\beta$ be two positive imaginary roots which are ext-orthogonal. Suppose that $\alpha$ lies in the fundamental domain. Then the support of $\beta$ is totally disconnected from the support of $\alpha$. 
\end{lemma}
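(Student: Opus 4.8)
The plan is to turn the hypotheses into a statement about the symmetrized Euler form and then feed the resulting support condition into the structure lemma for sincere fundamental roots proved above. First I would observe that $\alpha\in\mathcal{F}$ gives $(e_i,\alpha)\le 0$ for every $i\in Q_0$, hence $(\alpha,\beta)=\sum_{i\in Q_0}\beta_i\,(e_i,\alpha)\le 0$; since $\alpha$ and $\beta$ are ext-orthogonal we also have $\langle e_i$-free$\rangle$ — more precisely $\langle\alpha,\beta\rangle=hom(\alpha,\beta)\ge 0$ and $\langle\beta,\alpha\rangle=hom(\beta,\alpha)\ge 0$, so $(\alpha,\beta)\ge 0$. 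Therefore $(\alpha,\beta)=0$ and, the displayed sum being a sum of nonpositive terms, $\beta_i\,(e_i,\alpha)=0$ for all $i$; since $\beta$ is positive this yields $\supp(\beta)\subseteq N_\alpha$. (This is exactly Lemma \ref{hom} read in the present case, with the Weyl element taken to be the identity.)

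The second step is a short local computation. Because $Q$ is acyclic, expanding $\langle e_i,\alpha\rangle+\langle\alpha,e_i\rangle$ gives $(e_i,\alpha)=2\alpha_i-\sum_{j\in Q_0}m_{ij}\alpha_j$, where $m_{ij}$ denotes the number of arrows between $i$ and $j$. If $i\in N_\alpha$ but $i\notin\supp(\alpha)$, then $\alpha_i=0$ forces $\sum_j m_{ij}\alpha_j=0$, and as all summands are nonnegative no neighbour of $i$ lies in $\supp(\alpha)$. Combined with $\supp(\beta)\subseteq N_\alpha$, this already shows that if $\supp(\beta)$ is not totally disconnected from $\supp(\alpha)$ then $\supp(\beta)\cap\supp(\alpha)\neq\emptyset$: any vertex of $\supp(\beta)$ adjacent to $\supp(\alpha)$ must itself belong to $\supp(\alpha)$.

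Assume for contradiction that $S:=\supp(\beta)\cap\supp(\alpha)\neq\emptyset$. Since $\beta$ is a root its support is connected, and by the previous step there is no arrow between $S\subseteq\supp(\alpha)$ and $\supp(\beta)\setminus\supp(\alpha)\subseteq N_\alpha\setminus\supp(\alpha)$; connectedness therefore forces $\supp(\beta)\setminus\supp(\alpha)=\emptyset$, so $\supp(\beta)\subseteq N_\alpha\cap\supp(\alpha)$. Now let $Q'$ be the full subquiver on $\supp(\alpha)$ and $\alpha'$ the restriction of $\alpha$. Since $\alpha$ vanishes off $\supp(\alpha)$, $\alpha'$ is sincere, $(e_i,\alpha')_{Q'}=(e_i,\alpha)_Q$ for $i\in\supp(\alpha)$, so $\alpha'$ still lies in the fundamental domain of $Q'$ and $N_{\alpha'}=N_\alpha\cap\supp(\alpha)$. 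Applying the structure lemma for sincere fundamental roots to $\alpha'$ gives two cases: either $Q'$ is affine and $\alpha'$ is isotropic, or the full subquiver on $N_{\alpha'}$ is a union of Dynkin quivers. In the second case $\supp(\beta)$ is a connected subquiver of a union of Dynkin quivers, hence of a single Dynkin quiver, so $\beta$ is a real root, contradicting that $\beta$ is imaginary. Hence $\supp(\beta)\cap\supp(\alpha)=\emptyset$, and then by the second step there are no arrows between the two supports either, which is the claim.

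The step I expect to be the real obstacle is the remaining case, in which $Q'$ is affine and $\alpha'$ is isotropic: there $N_{\alpha'}$ is all of $\supp(\alpha)$ and the Dynkin argument no longer applies. Here I would invoke Lemma \ref{fun}, which forces $\alpha'$ to be a multiple of the isotropic Schur root $\delta'$ of $Q'$; if $\alpha$ is a Schur root this gives $\alpha=\delta'$, and one then has to rule out $\supp(\beta)\subseteq\supp(\alpha)$ by using that an affine quiver has no imaginary Schur root other than $\delta'$, so that $\beta=\delta'=\alpha$, which does not occur once $\alpha$ and $\beta$ are taken to be distinct (as they are within a component cluster). Making this degenerate affine situation airtight, and pinning down exactly the hypotheses under which it is vacuous, is the delicate point; everything else is bookkeeping with the Euler form and the two lemmas above.
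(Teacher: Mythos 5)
Your argument follows the same route as the paper's proof: ext-orthogonality together with $\alpha\in\mathcal{F}$ forces $(\alpha,\beta)=0$, hence $\supp(\beta)$ is either totally disconnected from $\supp(\alpha)$ or contained in $N_{\alpha}\cap\supp(\alpha)$, and the latter is excluded because that subquiver is a union of Dynkin quivers, whose positive definite Tits form cannot support an imaginary root. You are in fact more careful than the paper on the two steps it compresses: the verification that a vertex of $N_{\alpha}\setminus\supp(\alpha)$ has no neighbour in $\supp(\alpha)$ (which is what makes the dichotomy ``totally disconnected or inside $N_{\alpha}\cap\supp(\alpha)$'' correct, using connectedness of $\supp(\beta)$), and the explicit restriction to the full subquiver on $\supp(\alpha)$, where the structure lemma for sincere fundamental roots actually applies.

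The ``delicate point'' you flag is a genuine gap, but it is a gap in the paper's proof as well, not only in yours: the paper simply asserts that the subquiver on $N_{\alpha}\cap\supp(\alpha)$ is Dynkin, which fails precisely when the restriction of $\alpha$ to its support is isotropic, so that $\supp(\alpha)$ is affine and $N_{\alpha}\cap\supp(\alpha)=\supp(\alpha)$. In that case the statement as literally written is false: if $\delta'$ is the isotropic root of an affine full subquiver, then $\alpha=\delta'$ and $\beta=2\delta'$ are ext-orthogonal positive imaginary roots with $\alpha$ fundamental and equal supports. Your proposed repair --- observe via Lemma \ref{fun} that the only imaginary roots supported on the affine piece are multiples of $\delta'$, and that the only Schur root among these is $\delta'$ itself, so the case is vacuous once $\alpha$ and $\beta$ are distinct imaginary Schur roots --- is exactly what is needed in the one place the lemma is used (Lemma \ref{imaginary}, where the roots are distinct members of a component cluster). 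So your proof is at least as complete as the paper's; the clean fix is to add the hypothesis that $\alpha$ and $\beta$ are distinct Schur roots, or to exclude the isotropic case from the statement.
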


\begin{proof}
If $\beta$ and $\alpha$ are ext-orthogonal, then $(\alpha, \beta)=0$. Therefore the support of $\beta$ is totally disconnected from the support of $\alpha$ or it is contained in $N_{\alpha}\cap \supp \alpha$. Note that the quiver generated by the vertices of $N_{\alpha}\cap \supp \alpha$ is a Dynkin quiver.  Thus, since $\beta$ is an imaginary root, its support cannot be contained in $N_{\alpha} \cap \supp \alpha$ and this concludes the proof. 
\end{proof}
Let $\alpha$ be an imaginary Schur root which is fundamental and not sincere.  Then a component cluster contains $\alpha$ if and only if it contains all the negative Schur roots corresponding to the vertices connected to the support of $\alpha$.

\begin{lemma}\label{imaginary}
Let $\alpha_1, \ldots, \alpha_n$ be imaginary Schur roots appearing in the same component cluster. Then there exists a Weyl group element $w$ such that the $w\alpha_i$ are all fundamental and the support of $w\alpha_i$ and $w\alpha_j$ is totally disconnected for all $i \not =j$. Also,  there is a component cluster containing $w \alpha_1 , \ldots , w \alpha_n$. 
\end{lemma}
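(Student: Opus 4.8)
The plan is to deduce the statement from a more flexible claim proved by induction on $|Q_0|$: \emph{given distinct positive imaginary roots $\beta_1,\dots,\beta_r$ of an acyclic quiver $Q$ that are pairwise orthogonal for the symmetrized Euler form and such that every isotropic $\beta_i$ is non-divisible, there is $w\in W$ with all $w\beta_i$ fundamental and with the supports $\supp(w\beta_i)$ pairwise totally disconnected.} The reason for trading ext-orthogonality for symmetrized-form orthogonality is that the latter is $W$-invariant, whereas ext-orthogonality is not (as the excerpt warns); and by Lemma \ref{hom} two imaginary ext-orthogonal roots $\alpha,\beta$ always satisfy $(\alpha,\beta)=0$. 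All the hypotheses of the auxiliary claim are $W$-invariant and are inherited by full subquivers: for a dimension vector supported on a subset $S$, being a root, being imaginary, being divisible, and the value of $(-,-)$ all agree whether computed in $Q$ or in the full subquiver $Q|_S$ on $S$. Granting the claim, Lemma \ref{imaginary} follows at once for the $\alpha_i$: they are distinct (a component cluster is a set), pairwise ext-orthogonal hence pairwise $(-,-)$-orthogonal, and any isotropic one among them is non-divisible since an isotropic Schur root is the minimal positive imaginary root of an affine subquiver \cite{Kac}.

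For the inductive step, pick $w_1\in W$ bringing $\beta_1$ into $\mathcal{F}$ (possible since every positive imaginary root lies in $W\cdot\mathcal{F}$; if $\beta_1$ is isotropic it is already fundamental, being the $\delta$-vector of the affine quiver $\supp\beta_1$). Since $(w_1\beta_1,w_1\beta_j)=(\beta_1,\beta_j)=0$, the argument proving Lemma \ref{sup} applies — it uses the ext-orthogonality hypothesis only through the vanishing of the symmetrized form — and shows that $\supp(w_1\beta_j)$ is totally disconnected from $\supp(w_1\beta_1)$ for all $j\ge 2$. When $w_1\beta_1$ is isotropic, so that $N_{w_1\beta_1}\cap\supp(w_1\beta_1)=\supp(w_1\beta_1)$ is affine rather than a union of Dynkin quivers, the same conclusion still holds, because an imaginary root supported inside a connected affine quiver is a multiple of its $\delta$, and non-divisibility together with distinctness then forbid $w_1\beta_j$ from being supported there. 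Let $S$ be the set of vertices of $Q$ lying neither in $\supp(w_1\beta_1)$ nor adjacent to it; then $\supp(w_1\beta_j)\subseteq S$ for $j\ge 2$, and $Q|_S$ has fewer vertices than $Q$ since $\supp(w_1\beta_1)\ne\emptyset$. Every $s_i$ with $i\in S$ fixes $w_1\beta_1$ (as $i\in N_{w_1\beta_1}$), so the Weyl group of $Q|_S$ is a subgroup of $W$ fixing $w_1\beta_1$. Applying the induction hypothesis inside $Q|_S$ to $w_1\beta_2,\dots,w_1\beta_r$ yields $w'$ in that subgroup with all $w'w_1\beta_j$ fundamental (fundamentality in $Q|_S$ and in $Q$ coincide for vectors supported in $S$, the conditions $(d,e_k)\le 0$ for $k\notin S$ being automatic since $d_k=0$) and with pairwise totally disconnected supports. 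Then $w=w'w_1$ works: $w\beta_1=w_1\beta_1$ is fundamental, and the supports are pairwise totally disconnected — for pairs involving index $1$ because $S$ is disjoint from and non-adjacent to $\supp(w_1\beta_1)$, and for the rest because the property already holds in the full subquiver $Q|_S$.

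It remains to recover the last assertion of the lemma from the $w$ produced for the $\alpha_i$. If $\supp(w\alpha_i)$ and $\supp(w\alpha_j)$ are disjoint with no arrow between them, then every morphism between representations of dimensions $w\alpha_i$ and $w\alpha_j$ vanishes, so $hom(w\alpha_i,w\alpha_j)=0$, and since $\langle w\alpha_i,w\alpha_j\rangle=0$ this forces $ext(w\alpha_i,w\alpha_j)=0$; symmetrically in the other order, so the $w\alpha_i$ are pairwise ext-orthogonal. Each $w\alpha_i$ is a root (the Weyl group permutes roots) and is fundamental, hence a Schur root by Lemma \ref{fun} — unless it is isotropic and a proper multiple of a fundamental isotropic Schur root, which cannot happen here since that would exhibit $\alpha_i$ as a proper multiple of a root while the isotropic Schur root $\alpha_i$ is non-divisible. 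Thus $w\alpha_1,\dots,w\alpha_n$ form a complete subgraph of the component graph of $Q$, and by Zorn's lemma (a union of a chain of complete subgraphs being complete) this is contained in a maximal one, that is, in a component cluster.

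The main obstacle is conceptual rather than computational: the Weyl group does not respect ext-orthogonality, nor even one-sided vanishing of $\Hom$ or $\Ext^1$. This is why the induction must be carried out with the $W$-invariant symmetrized form, and why the statement is arranged to reconstruct a component cluster only after the supports have been separated — once the supports are totally disconnected, ext-orthogonality and membership in a component cluster come for free. A secondary technical point is the (routine but necessary) verification that fundamentality and the other relevant notions restrict correctly to $Q|_S$, together with the special treatment of isotropic Schur roots via their description as $\delta$-vectors of affine subquivers.
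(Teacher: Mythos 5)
Your proof is correct and follows essentially the same strategy as the paper's: bring $\alpha_1$ into the fundamental domain, use the $W$--invariance of the symmetrized Euler form together with the disconnected-support argument of Lemma \ref{sup}, recurse on the remaining roots via reflections supported away from $\supp(w_1\alpha_1)$, and finally observe that totally disconnected supports give ext-orthogonality while fundamental roots are Schur by Lemma \ref{fun}. Your write-up is in fact somewhat more careful than the paper's (induction on $|Q_0|$ rather than on the number of roots, an explicit patch for the isotropic case of Lemma \ref{sup}, which as stated fails for divisible isotropic roots, and the verification that fundamentality restricts correctly to full subquivers), but these are refinements of the same argument rather than a different route.
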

\begin{proof}
There is a Weyl group element $w_1$ such that $w_1\alpha_1$ is fundamental. Then all $w_1 \alpha_i$ are positive imaginary roots satisfying $(w_1 \alpha_i, w_1 \alpha_1)=0$. Hence the support of $w_1\alpha_i$ is totally disconnected from the support of $w_1\alpha_1$ for all $i \not =1$.

If we restrict $w_1\alpha_2$ to the quiver $Q_2$ generated by its support, then $w_1 \alpha_2$ is a positive imaginary root for that quiver. Hence there is  a Weyl group element $w_2$ which is a product of simple reflections on vertices of $Q_2$ such that $w_2 w_1\alpha_2$ is fundamental in $Q_2$. Then $w_1w_2 \alpha_2$ is also fundamental in $Q$ with support contained in $Q_2$ and $w_2w_1 \alpha_1=w_1\alpha_1$. It follows that the support of $w_2w_1\alpha_i$ is totally disconneted from the support of $w_2w_1\alpha_1$ and $w_2w_1 \alpha_2$ for all $i\not= 1,\ 2$. By induction on $n$, there is an element $w:= w_n\cdots w_1$ such that $w \alpha_i$ are all fundamental roots with pairwise totally disconnected support.
 
Roots with totally disconnected support are always ext-orthogonal and fundamental roots are always Schur by \ref{fun}. Hence there is a component cluster containing $w\alpha_1,\ldots , w\alpha_n$. 
\end{proof}
\begin{cor}\label{maximal imaginary}
The maximal number of imaginary Schur roots that can appear in a component cluster is given by the maximal number of totally disconnected subgraphs of wild or tame type. 
\end{cor}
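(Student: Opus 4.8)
The plan is to establish both inequalities, writing $m$ for the maximal number of pairwise totally disconnected connected subquivers of $Q$ of tame or wild type.

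\emph{Upper bound.} I would take imaginary Schur roots $\alpha_1,\dots,\alpha_k$ lying in a common component cluster and apply Lemma~\ref{imaginary} to obtain $w\in W$ such that all the $w\alpha_i$ are fundamental with pairwise totally disconnected supports. Since a fundamental root has connected support, the full subquivers on the vertex sets $\supp(w\alpha_i)$ are connected, pairwise totally disconnected, and pairwise distinct, and each of them carries the imaginary root $w\alpha_i$, so none of them is Dynkin; that is, each is of tame or wild type. Hence $k\le m$.

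\emph{Lower bound.} Starting from pairwise totally disconnected connected subquivers $Q^{(1)},\dots,Q^{(m)}$ of $Q$ of tame or wild type, I would attach to each $Q^{(i)}$ an imaginary Schur root $\gamma_i$ of $Q$ with $\supp\gamma_i\subseteq Q^{(i)}_0$. As $Q^{(i)}$ is not Dynkin it has imaginary roots, so its fundamental domain is non-empty; I pick a fundamental root of $Q^{(i)}$ there and, using Lemma~\ref{fun}, replace it by the underlying isotropic fundamental Schur root if it happens to be isotropic, so that $\gamma_i$ is a fundamental Schur root of $Q^{(i)}$. Extending $\gamma_i$ by zero to $Q$, I check $(\gamma_i,e_j)\le 0$ for all $j\in Q_0$ by splitting into $j\in Q^{(i)}_0$, where fullness of $Q^{(i)}$ makes this form coincide with the symmetrized Euler form of $Q^{(i)}$, and $j\notin Q^{(i)}_0$, where it is visibly non-positive; thus $\gamma_i$ lies in the fundamental domain of $Q$. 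Moreover a Schurian representation of $Q^{(i)}$ of dimension vector $\gamma_i$ remains Schurian when viewed over $Q$, so $\gamma_i$ is an imaginary Schur root of $Q$. The supports $\supp\gamma_i\subseteq Q^{(i)}_0$ are pairwise totally disconnected, hence, by the fact recalled in the proof of Lemma~\ref{imaginary}, the roots $\gamma_i$ are pairwise ext-orthogonal and in particular pairwise distinct. Since a finite set of pairwise ext-orthogonal distinct Schur roots extends to a maximal one, it is contained in a component cluster, which therefore contains at least $m$ imaginary Schur roots.

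The two inequalities together give the claim. The step requiring the most care is the lower bound: one must confirm that a fundamental root of a tame or wild full subquiver, extended by zero, is again fundamental for $Q$ and can be chosen to be a Schur root --- both routine given Lemma~\ref{fun} and the fullness of the $Q^{(i)}$, but this is where the content lies --- and one uses in addition that cliques of the component graph are finite, so that the constructed clique $\{\gamma_1,\dots,\gamma_m\}$ does extend to a component cluster.
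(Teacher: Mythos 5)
Your proof is correct, and for the upper bound it follows exactly the paper's (very terse) argument: apply Lemma~\ref{imaginary} to replace the imaginary Schur roots of a component cluster by fundamental roots with pairwise totally disconnected connected supports, and observe that each such support carries an imaginary root, hence is a tame or wild subquiver. The paper's proof stops there; it does not address the reverse inequality at all. Your lower-bound construction --- picking a fundamental Schur root (via Lemma~\ref{fun}) on each of the $m$ pairwise totally disconnected tame or wild full subquivers, checking that its extension by zero is still fundamental and Schurian for $Q$, using total disconnectedness of supports for ext-orthogonality, and extending the resulting finite clique to a maximal one --- is a genuine addition that the corollary as stated actually needs, since it asserts an equality rather than a bound. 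The details you flag (fundamentality of the extended root, finiteness of cliques so that extension to a component cluster is possible) all check out, the latter because the number of real Schur roots in a clique is bounded by $|Q_0|$ and the number of imaginary ones by your upper bound.
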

\begin{proof}
By Theorem \ref{imaginary}, we can assume without loss of generality that the imaginary Schur roots in a cluster are fundamental and have totally disjoint support. The support of the roots are quivers of tame or wild type. 
\end{proof}
Note that by Corollary 21 of  \cite{DerksenWeyman} the number of real Schur roots in a component cluster is bounded by the number of vertices of $Q$ minus twice the number of imaginary Schur roots appearing in 
the component cluster.

\begin{lemma}
Let $\alpha_1, \ldots , \alpha_k, \beta_1, \ldots \beta_s$ be a component cluster such that $\alpha_1, \ldots , \alpha_k$ are imaginary non-isotropic Schur roots. Then for all $n \in \N$ $n\alpha_1, \ldots , n\alpha_k, \beta_1, \ldots \beta_s$ is also a component cluster.  
\end{lemma}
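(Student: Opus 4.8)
The plan is to show that the operation of rescaling imaginary non-isotropic Schur roots by a positive integer $n$ preserves both the Schur-root property and pairwise ext-orthogonality, so that the rescaled collection $n\alpha_1,\ldots,n\alpha_k,\beta_1,\ldots,\beta_s$ is again a maximal complete subgraph of the component graph. The first step is to reduce to the fundamental situation: by Lemma \ref{imaginary}, after applying a suitable Weyl group element $w$ we may assume the $\alpha_i$ are fundamental roots with pairwise totally disconnected supports, and there is a component cluster containing $w\alpha_1,\ldots,w\alpha_k$ together with (suitably transformed) real roots. Since rescaling commutes with the linear $W$-action, $w(n\alpha_i)=n(w\alpha_i)$, and a fundamental root stays fundamental under multiplication by $n\in\N$ because the defining inequalities $(d,e_i)\le 0$ are homogeneous and the support is unchanged; hence $n(w\alpha_i)$ is fundamental, and by Lemma \ref{fun} — once one notes that multiplying a fundamental isotropic root by $n$ again gives a fundamental root of the prescribed isotropic form — it is a Schur root. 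So each $n\alpha_i$ is a Schur root.

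The second step is to verify ext-orthogonality among the rescaled imaginary roots. Within the fundamental reduction, the supports of $w\alpha_i$ and $w\alpha_j$ are totally disconnected for $i\ne j$; rescaling does not change supports, so $n(w\alpha_i)$ and $n(w\alpha_j)$ still have totally disconnected support and are therefore ext-orthogonal (as used in the proof of Lemma \ref{imaginary}, roots with totally disconnected support are ext-orthogonal). The delicate point is ext-orthogonality between $n\alpha_i$ and each real root $\beta_\ell$. Here I would argue directly, without the Weyl reduction, using the characterization $\langle a,b\rangle=hom(a,b)-ext(a,b)$ together with Lemma \ref{hom}: since $\alpha_i$ is imaginary and $\beta_\ell$ is real and they are ext-orthogonal, after moving to a frame where $\alpha_i$ is fundamental Lemma \ref{hom} gives $hom(\alpha_i,\beta_\ell)=hom(\beta_\ell,\alpha_i)=0$ and $(\alpha_i,\beta_\ell)=0$. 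Vanishing of both $hom$'s forces the generic representations to have disjoint support in the relevant sense, and this persists after rescaling $\alpha_i$ by $n$: a generic representation of dimension $n\alpha_i$ can be taken to be the direct sum of $n$ generic representations of dimension $\alpha_i$ (this is exactly Kac's decomposition, since $\alpha_i$ is a Schur root with $ext(\alpha_i,\alpha_i)=0$ as $hom(\alpha_i,\alpha_i)$ is small — or more robustly, $hom$ and $ext$ with $\beta_\ell$ are additive in the first argument along a generic family), so $hom(n\alpha_i,\beta_\ell)=n\cdot hom(\alpha_i,\beta_\ell)=0$ and likewise $ext(n\alpha_i,\beta_\ell)=\langle n\alpha_i,\beta_\ell\rangle^{-}$ computations give $ext=0$ on both sides. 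Thus $n\alpha_i$ and $\beta_\ell$ remain ext-orthogonal, and the $\beta_\ell$ among themselves are untouched.

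The third step is maximality. The collection $n\alpha_1,\ldots,n\alpha_k,\beta_1,\ldots,\beta_s$ is a complete subgraph of the component graph by the above; to see it is maximal, observe that it has the same size $k+s$ as the original component cluster, and that the bound of Corollary \ref{maximal imaginary} together with the remark after it (the number of real Schur roots is bounded by $n-2k$, coming from Corollary 21 of \cite{DerksenWeyman}) shows that $k+s$ already attains the maximal possible size for a complete subgraph containing $k$ imaginary Schur roots whose supports realize the maximal packing of tame-or-wild subquivers. Since rescaling preserves $k$ and the supports, the rescaled collection sits inside a component cluster of the same type, and by the size count it must be the whole component cluster. The main obstacle I anticipate is the bookkeeping in the second step: carefully justifying that $hom(n\alpha_i,\beta_\ell)=n\,hom(\alpha_i,\beta_\ell)$ and the corresponding $ext$ identity generically — this needs either the explicit generic direct-sum decomposition of $\rep_{n\alpha_i}Q$ or a semicontinuity argument comparing the generic locus of $\rep_{n\alpha_i}Q$ with the image of $\rep_{\alpha_i}Q^{\times n}$, and one must make sure the relevant open sets meet.
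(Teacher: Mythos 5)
Your overall shape (rescaling preserves the Schur property and ext-orthogonality) is right, but several of the steps you use to get there do not work. First, the Weyl-group reduction is not available for this lemma: as noted in the paper, the Weyl action preserves neither Schur roots nor ext-orthogonality, so from ``$w(n\alpha_i)$ is fundamental, hence Schur'' you cannot conclude that $n\alpha_i$ is Schur, and you certainly cannot transport the real roots $\beta_\ell$ (which may not even stay positive under $w$) into a frame where Lemma \ref{hom} applies; the paper explicitly warns that the second part of Lemma \ref{hom} fails when ``fundamental'' is weakened to ``imaginary''. The correct and direct reference for the Schur property of $n\alpha_i$ is Theorem 3.7 of \cite{Schofield}. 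Second, your key computational claim is false: the generic representation of dimension $n\alpha_i$ is \emph{not} a direct sum of $n$ generic representations of dimension $\alpha_i$, because $ext(\alpha_i,\alpha_i)=hom(\alpha_i,\alpha_i)-q(\alpha_i)\ge 1$ for any imaginary root (your assertion $ext(\alpha_i,\alpha_i)=0$ would force $\alpha_i$ to be real); indeed $n\alpha_i$ is itself a Schur root, so its generic decomposition is trivial. What saves the conclusion is much simpler: $ext$ is the \emph{minimal} value of $\dim\Ext^1$, so to prove $ext(n\alpha_i,\beta_\ell)=0$ it suffices to exhibit a single (non-generic) pair with vanishing $\Ext^1$, namely $(A^{\oplus n},B)$ for $A,B$ witnessing $ext(\alpha_i,\beta_\ell)=ext(\beta_\ell,\alpha_i)=0$. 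No additivity of generic $hom$, no support argument, and no Weyl reduction is needed; this is exactly the argument in the paper.

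The maximality step is the most serious gap. Component clusters are \emph{maximal} complete subgraphs of the component graph, not complete subgraphs of \emph{maximum} size: the paper exhibits, for the same wild quiver, component clusters of size one alongside component clusters of size $n$. Hence ``the rescaled collection has the same cardinality $k+s$ as the original'' proves nothing, and Corollary \ref{maximal imaginary} only bounds the number of imaginary roots, not the total size. The maximality you need follows instead from the converse implication: if a Schur root $\gamma$ were ext-orthogonal to all of $n\alpha_1,\ldots,n\alpha_k,\beta_1,\ldots,\beta_s$, then $\gamma$ would be ext-orthogonal to each $\alpha_i$ as well (ext-orthogonality of $n\alpha$ and $\gamma$ is equivalent to that of $\alpha$ and $\gamma$), contradicting the maximality of the original component cluster. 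You should also note that $\gamma$ cannot equal any $\alpha_i$, since $ext(n\alpha_i,\alpha_i)\ge -\langle n\alpha_i,\alpha_i\rangle=-n\,q(\alpha_i)>0$ for $\alpha_i$ imaginary non-isotropic.
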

\begin{proof}
By Theorem 3.7 of \cite{Schofield} the $\N$-multiple of an imaginary non-isotropic Schur root $\alpha_i$ is also a Schur root. 
Let $\alpha$ and $\beta$ be two positive roots, and let $n \in \N$. We show that they are ext-orthogonal if and only if $n \alpha$ and $\beta$ are ext-ortogonal.  
If $\alpha$ and $\beta$ are ext-orthogonal there are representations $A$ and $B$ of dimension vector $\alpha$ and $\beta$ such that $\Ext^1(A, B)$ and $\Ext^1(B,A)$ vanish. Note that 
this holds if and only if 
$\Ext^1(\bigoplus_{i=1}^nA, B)$ and $\Ext^1(B,\bigoplus_{i=1}^nA)$ vanish. Thus $\alpha$ and $\beta$ are ext-orthogonal if and only if $n\alpha$ and $\beta$ are ext-orthogonal, and this concludes the proof. 
\end{proof}

\begin{rem}\label{wild}
For any wild quiver there is a sincere fundamental imaginary root $\alpha$ such that $N_{\alpha}$ is empty. 
Clearly, this Schur root appears as the only element of a component cluster.  As the null-cone of a root and the null-cone of its positive multiples agree, we conclude that wild quivers always have infinitely many component clusters of size one. 
\end{rem}

The next example shows that the size of component clusters depends also on the orientation of the quiver: suppose $\alpha$ is a Schur roots for two quivers  $Q$ and $Q'$ 
with isomorphic underlying (non-oriented) graph. Then the maximal size of component clusters containing $\alpha$ may be different for $Q$ and 
$Q'$.

\begin{example}
Let $\alpha$ be given by 
\[ \xymatrix{&1\ar[d]  \ar[rd]&&\\1& 2\ar[l]& 2\ar[l] & 1 \ar[l] .}\]
It is a fundamental root, and hence Schur. 

We change the orientation of one arrow and consider the fundamental Schur root $\beta$ 

\[ \xymatrix{&1\ar[d] &&\\1& 2\ar[l]& 2\ar[l] \ar[lu]  & 1. \ar[l] }\] 

Direct computation shows that the component cluster containing $\alpha$ has exactly 2 elements, $\alpha$ and $\alpha'$ where $\alpha'$ is given by 
\[  \xymatrix{&0\ar[d]  \ar[rd]&&\\ 0& 1\ar[l]& 1\ar[l] & 1 \ar[l] .}\] 

On the other hand $\beta$ appears alone in a component cluster. 
Note also that by \ref{maximal imaginary} the maximal number of imaginary roots appearing in the same component cluster is at most one. 
\end{example} 

\begin{rem}
As the orientation of the quiver affects  the size of component clusters but the Tits form is independent of it, we cannot hope for an exact upper bound involving the Tits form of a root. Another way to see this is as follows. Start with a fundamental sincere root  $\alpha$ of a quiver $Q$ and add a vertex $x$ to $Q$ and $n>2$ arrows from $x$ to $y$, where $y$ is a vertex of $Q$ which is totally disconnected from $N_{\alpha}$. Let us denote the new quiver by $Q'$ and let $\alpha'$ be a new root with $\alpha'(z)=\alpha(z)$ for all $z \in Q_0$ and $\alpha'(x)=1$. Then the root $\alpha'$ is fundamental and sincere and there is a canonical bijection between the component clusters containing $\alpha$ and the component clusters containing $\alpha'$  but $q(\alpha') $ can be made arbitrarily small by increasing $n$. 
\end{rem}

\begin{lemma}
Let $\alpha$ be a fundamental non-divisible isotropic root. Then $\alpha$ appears in a component cluster of size $|Q_0|-1$.  
\end{lemma}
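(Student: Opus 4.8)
The plan is to exhibit an explicit component cluster of size $|Q_0|-1$ containing $\alpha$. First I would apply Lemma~\ref{fun}: since $\alpha$ is fundamental and isotropic, the full subquiver $Q'$ of $Q$ on the vertex set $\supp(\alpha)$ is an affine quiver, and $\alpha=\bigoplus_{i=1}^{t}\beta$ where $\beta$ is the non-divisible positive isotropic Schur root of $Q'$; as $\alpha$ is non-divisible we must have $t=1$, so $\alpha=\beta=\delta_{Q'}$, the minimal positive isotropic root of $Q'$. Set $r=|\supp(\alpha)|$ and $m=|Q_0|$. The next point to record is that, because $Q'$ is a \emph{full} subquiver, any extension of two representations supported on $\supp(\alpha)$ is again supported on $\supp(\alpha)$; consequently, for representations supported on $\supp(\alpha)$ the functors $\Hom_Q$, $\Ext^1_Q$ and $\End_Q$ coincide with those computed over $kQ'$. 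Hence the Schur roots of $Q$ supported on $\supp(\alpha)$ are exactly the Schur roots of $Q'$, and two of them are ext-orthogonal over $Q$ if and only if they are ext-orthogonal over $Q'$.

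Next I would assemble the cluster. Applying Theorem~\ref{affine} to the affine quiver $Q'$ (concretely through its proof, i.e. taking $\delta_{Q'}$ together with a maximal rigid object in each exceptional tube, which exists by Theorem~\ref{max-rigid-tube}) yields a component cluster $\{\alpha,\gamma_1,\dots,\gamma_{r-2}\}$ of $Q'$ of size $r-1$; by the previous paragraph these are pairwise ext-orthogonal Schur roots of $Q$ as well. I would then adjoin the $m-r$ negative Schur roots $-e_j$ for $j\in Q_0\setminus\supp(\alpha)$: each $-e_j$ is ext-orthogonal to $\alpha$ and to every $\gamma_i$ because the $j$-th coordinate of these positive roots vanishes, and any two negative Schur roots are ext-orthogonal. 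This produces a family
\[
\mathcal{K}:=\{\alpha,\gamma_1,\dots,\gamma_{r-2}\}\cup\{-e_j\mid j\in Q_0\setminus\supp(\alpha)\}
\]
of $1+(r-2)+(m-r)=m-1$ pairwise ext-orthogonal generalized Schur roots of $Q$, containing $\alpha$.

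It then remains to verify that $\mathcal{K}$ is maximal, i.e. a component cluster. I would argue: if $\eta$ is a generalized Schur root ext-orthogonal to every element of $\mathcal{K}$ and $\eta=-e_j$, then ext-orthogonality with $\alpha$ forces $\alpha_j=0$, so $j\in Q_0\setminus\supp(\alpha)$ and $\eta\in\mathcal{K}$; if instead $\eta$ is a positive Schur root, then ext-orthogonality with each $-e_j$ ($j\notin\supp(\alpha)$) forces $\supp(\eta)\subseteq\supp(\alpha)$, so by the full-subquiver remark $\eta$ is a Schur root of $Q'$ that is ext-orthogonal over $Q'$ to the component cluster $\{\alpha,\gamma_1,\dots,\gamma_{r-2}\}$, hence already belongs to it. This gives the desired component cluster of size $m-1$ containing $\alpha$; note that if $Q$ is itself affine (so $\supp(\alpha)=Q_0$) the statement reduces to Theorem~\ref{affine}. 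I expect the only delicate step to be this reduction to the affine full subquiver $Q'$, namely the identification $\alpha=\delta_{Q'}$ via Lemma~\ref{fun} and the transfer of $\Hom$ and $\Ext^1$ across the inclusion $\mod kQ'\hookrightarrow\mod kQ$; once these are settled, everything else is routine, so I do not anticipate a genuine obstacle.
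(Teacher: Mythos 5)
Your proof is correct and follows essentially the same strategy as the paper's: identify $\alpha$ with the minimal isotropic root $\delta_{Q'}$ of the affine full subquiver $Q'$ on $\supp(\alpha)$, apply Theorem \ref{affine} there to get $|\supp(\alpha)|-1$ pairwise ext-orthogonal Schur roots, and complete by roots supported off $\supp(\alpha)$. The only (harmless) difference is in the completion step --- the paper uses negative Schur roots $-e_j$ only at vertices adjacent to $\supp(\alpha)$ and positive real Schur roots on the totally disconnected part, whereas you use $-e_j$ at every vertex outside $\supp(\alpha)$; both yield a maximal collection of size $|Q_0|-1$, and your explicit maximality check and justification of the transfer of $\Hom$ and $\Ext^1$ across the full subquiver are welcome details the paper leaves implicit.
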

\begin{proof}
The support of $\alpha$ is an affine quiver and we know by Theorem \ref{affine} that $\alpha$ is ext-orthogonal to $|\supp(\alpha)|-1$ real Schur roots that 
have support contained in $\supp ( \alpha)$. Now for every vertex connected to $\supp( \alpha)$ we add the negative Schur root $-e_j$ to this collection. We can now complete the $ext$-orthogonal collection by real Schur roots with support in the vertices totally disconnected to $\supp(\alpha)$. 
\end{proof}
Note that the previous Lemma is false in general if we drop the hypothesis that $\alpha$ is fundamental. Also, non-divisible isotropic roots are not necessarily Schur ( see Example 27 of  \cite{DerksenWeyman}).

It is clear by the uniqueness of the generic decomposition that $n$ Schur roots appearing in the same component cluster are linearly independent.

\section{Mutation of component clusters}
Motivated by the cluster mutations and exchange relations which appear in the definition of cluster algebras, we propose a definition of mutations and exchange relations of component clusters. 
\begin{defi}
Two component clusters $ C_1$ and $C_2$ are connected by a \emph{ mutation} if their intersection has cardinality min$(|C_1|, |C_2|)-1$. 
\end{defi}
If the components  cluster consists of real Schur roots, then they corresponds uniquely to clusters of the cluster algebra $\mathcal{A_Q}$, and the above definition recovers the ordinary definition of cluster mutation.  
\begin{prop}
The mutation graph is connected.
\end{prop}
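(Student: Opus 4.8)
The plan is to split into the three types of connected acyclic quiver --- Dynkin, affine and wild --- and to exploit in all three cases the single general fact that the component clusters of maximal size $n$ are exactly the classical clusters of $\mathcal{A}_Q$ (Theorem~\ref{acyclic-main-thm} and \cite{HappelUnger05a}), that any ordinary cluster mutation is a mutation in the sense of the definition above (two classical clusters related by a cluster mutation share exactly $n-1$ Schur roots, and $\min(n,n)-1=n-1$), and that the exchange graph of $\mathcal{A}_Q$ is connected, every seed arising from a fixed one by iterated mutation (see also \cite{CalderoKeller08}). Hence the component clusters of size $n$ always span a connected subgraph of the mutation graph. If $Q$ is Dynkin, all roots are real, so by \cite{HappelUnger05a} every component cluster has size $n$ and we are done.

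For $Q$ affine I would use Theorem~\ref{affine}: the component clusters not of size $n$ are precisely those of size $n-1$, and each of these is of the form $C=\{\delta\}\cup\{\alpha_1,\dots,\alpha_{n-2}\}$ with the $\alpha_i$ pairwise ext-orthogonal real Schur roots (the regular roots singled out in the proof of Theorem~\ref{affine}). The associated indecomposable rigid objects of $\mathcal{C}_Q$ then have rigid direct sum, so $\{\alpha_1,\dots,\alpha_{n-2}\}$ is a partial cluster-tilting object, which completes to a cluster-tilting object, hence to a classical cluster $C'$ with $\{\alpha_1,\dots,\alpha_{n-2}\}\subseteq C'$ (\cite{HappelUnger05a},\cite{BuanMarshReinekeReitenTodorov06}). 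Since $\delta$, whose generic representation is not rigid, cannot be a summand of a cluster-tilting object, $\delta\notin C'$, so $|C\cap C'|=n-2=\min(n-1,n)-1$: the cluster $C$ is a single mutation away from the size-$n$ cluster $C'$. Together with the first paragraph this settles the affine case.

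For $Q$ wild the argument is shorter still: by Remark~\ref{wild} there are infinitely many component clusters of size one, say $\{\gamma_1\},\{\gamma_2\},\dots$ with the $\gamma_j$ pairwise distinct. For any component cluster $C$ with $\gamma_j\notin C$ we have $|\{\gamma_j\}\cap C|=0=\min(1,|C|)-1$ (component clusters being non-empty), so $\{\gamma_j\}$ and $C$ are joined by a mutation. Given two arbitrary component clusters $C_1,C_2$, choose $j$ with $\gamma_j\notin C_1\cup C_2$ --- possible since $C_1\cup C_2$ is finite --- to obtain the path $C_1\;\text{---}\;\{\gamma_j\}\;\text{---}\;C_2$ in the mutation graph. (A disconnected $Q$ is handled componentwise.)

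The approach essentially reassembles results already established in the excerpt, so I do not anticipate a serious obstacle. The point demanding the most care is the affine case: one must check that completing the partial cluster-tilting object $\{\alpha_1,\dots,\alpha_{n-2}\}$ yields a cluster meeting $C$ in \emph{exactly} $n-2$ roots, which rests on $\delta$ being imaginary and hence excluded from every cluster-tilting object. Perhaps counter-intuitively, the wild case --- where the combinatorics of component clusters is far less transparent --- turns out to be the easiest, thanks to the abundance of singleton component clusters supplied by Remark~\ref{wild}.
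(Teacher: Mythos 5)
Your proof is correct, but it does not follow the paper's route. The paper's argument is three lines long and uniform across types: by Bongartz completion there is a path of length at most $n$ from any component cluster to a classical cluster, and by \cite{HappelUnger05a} the classical clusters span a connected subgraph. You use the same two external inputs (completion of a rigid object to a cluster-tilting object, connectedness of the classical exchange graph) but deploy them differently. In the affine case you turn the completion idea into a genuine single mutation step, which works because Theorem \ref{affine} guarantees that a non-classical component cluster is exactly $\{\delta,\alpha_1,\dots,\alpha_{n-2}\}$ with the $\alpha_i$ real, so the completed cluster-tilting object meets it in precisely $n-2$ roots. In the wild case you do something the paper does not: you use the infinitely many singleton component clusters of Remark \ref{wild} as hubs, observing that under the stated definition a singleton $\{\gamma\}$ is adjacent to \emph{every} component cluster not containing $\gamma$ (since $\min(1,|C|)-1=0$), and that $C_1\cup C_2$ is finite because component clusters consist of linearly independent roots. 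This is a legitimate, if somewhat degenerate, exploitation of the definition, and it buys you something real: the paper's phrase ``path of length at most $n$'' is never unpacked, and when a wild component cluster contains two or more imaginary Schur roots it is not obvious what the intermediate component clusters on that path are supposed to be; your hub argument sidesteps this entirely (and in the wild case does not even need \cite{HappelUnger05a}). The only point I would tighten is the parenthetical on disconnected $Q$: one should check that mutating one connected component at a time really satisfies the cardinality condition $|C_1\cap C_2|=\min(|C_1|,|C_2|)-1$ for the glued clusters — it does, but it deserves a line.
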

\begin{proof}
Clearly by the Bongartz completion, there is a path of length at most $n$ from a component cluster to a classical cluster. By \cite{HappelUnger05a} the full subgraph consisting of classical clusters 
is connected by mutation. Hence all component clusters are connected by mutation. 
\end{proof}
In order to define exchange relations, we recall a few preliminary results. 
\begin{lemma}\label{ext}
Let $N$ and $M $ be two $kQ$-modules. Then we have a canonical
isomorphism $\ext^1_{\mathcal{C}_Q} (M,N)\cong \ext_{kQ}^1(M, N)
\oplus D \ext_{kQ}^1(N,M)$.
\end{lemma}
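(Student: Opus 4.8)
The statement is the standard "doubling" formula that relates extensions in the cluster category $\mathcal{C}_Q$ to extensions in $\mod kQ$. The plan is to compute $\ext^1_{\mathcal{C}_Q}(M,N)$ directly from the definition of $\mathcal{C}_Q$ as the orbit category $\mathcal{D}_Q/(\tau^{-1}\Sigma)^{\Z}$, using that morphisms in an orbit category are given by $\Hom_{\mathcal{C}_Q}(X,Y)=\bigoplus_{i\in\Z}\Hom_{\mathcal{D}_Q}(X,(\tau^{-1}\Sigma)^i Y)$, together with the standard facts that $\ext^1_{\mathcal{C}_Q}(M,N)=\Hom_{\mathcal{C}_Q}(M,\Sigma N)$, that $kQ$ is hereditary (so $\Hom_{\mathcal{D}_Q}(M,\Sigma^j N)=0$ for $j\neq 0,1$ when $M,N$ are modules), and the Auslander--Reiten/Serre duality formula $\Hom_{\mathcal{D}_Q}(M,\Sigma N)\cong D\Hom_{\mathcal{D}_Q}(N,\Sigma^{-1}\tau M)\cong D\Hom_{kQ}(N,\tau M)$.

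**Key steps.** First I would expand
\[
\ext^1_{\mathcal{C}_Q}(M,N)=\Hom_{\mathcal{C}_Q}(M,\Sigma N)=\bigoplus_{i\in\Z}\Hom_{\mathcal{D}_Q}\!\big(M,(\tau^{-1}\Sigma)^i\Sigma N\big).
\]
Next I would argue that only the terms $i=0$ and $i=-1$ survive. For $i=0$ this is $\Hom_{\mathcal{D}_Q}(M,\Sigma N)=\ext^1_{kQ}(M,N)$, since $M$ and $N$ are modules (concentrated in degree $0$) and $kQ$ is hereditary. For $i=-1$ we get $\Hom_{\mathcal{D}_Q}(M,(\tau^{-1}\Sigma)^{-1}\Sigma N)=\Hom_{\mathcal{D}_Q}(M,\tau N)$; by Serre duality in $\mathcal{D}_Q$ (the Serre functor being $\tau\Sigma$) this is $D\Hom_{\mathcal{D}_Q}(\tau N,\tau\Sigma M)\cong D\Hom_{\mathcal{D}_Q}(N,\Sigma M)=D\ext^1_{kQ}(N,M)$, again using heredity and that $\tau$ is an autoequivalence of $\mathcal{D}_Q$. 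Finally I would show the remaining summands vanish: for $i\geq 1$ or $i\leq -2$, $(\tau^{-1}\Sigma)^i\Sigma N$ is a complex concentrated in a single degree $\geq 2$ or $\leq -1$ after identifying $\tau$ with a module-level operation, so $\Hom$ from the module $M$ vanishes by heredity; here one uses that for a hereditary algebra $\tau^{-1}\Sigma$ has the effect, on the level of the derived category, of shifting "one AR-step", and iterating it moves complexes out of the range $[0,1]$ where $\Hom(M,-)$ can be nonzero. Assembling the two surviving terms gives the asserted direct sum decomposition, and I would remark that the splitting as a direct sum (rather than merely a filtration) is automatic because the two summands come from distinct graded pieces $i=0$ and $i=-1$ of the orbit-category Hom.

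**Main obstacle.** The delicate point is the vanishing of all but two summands, i.e. carefully controlling where $(\tau^{-1}\Sigma)^i N$ sits in $\mathcal{D}_Q$ for all $i\in\Z$ and checking that $\Hom_{\mathcal{D}_Q}(M,-)$ annihilates it unless $i\in\{0,-1\}$. One must be careful that $\tau^{-1}$ is only the AR translate on non-projective modules, so for projective summands of $N$ the object $\tau^{-1}N$ genuinely involves a shift; the cleanest route is to use Happel's description of $\mathcal{D}_Q$ for hereditary $kQ$ (indecomposables are shifts $\Sigma^j X$ of indecomposable modules $X$, with $\Hom$ between two such vanishing unless the shift degrees agree or differ by one) and to track the action of $\tau^{-1}\Sigma$ on this list. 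Everything else — the identification of the $i=0$ term with $\ext^1_{kQ}(M,N)$, and of the $i=-1$ term with $D\ext^1_{kQ}(N,M)$ via Serre duality — is routine once the range of nonvanishing is pinned down. An alternative, essentially equivalent, strategy is to cite the general formula for $\Hom$ in cluster categories from Keller's construction (or Buan--Marsh--Reineke--Reiten--Todorov) and just specialize it; I would mention this as a shortcut but prefer the self-contained computation above.
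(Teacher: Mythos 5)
Your argument is correct, but it is worth noting that the paper does not actually prove this lemma: its ``proof'' consists of the single line that this is Proposition 1.7~c) of Buan--Marsh--Reineke--Reiten--Todorov, which is exactly the shortcut you mention at the end. Your self-contained computation is essentially the standard proof of that cited proposition: expand $\Hom_{\mathcal{C}_Q}(M,\Sigma N)=\bigoplus_{i\in\Z}\Hom_{\mathcal{D}_Q}(M,(\tau^{-1}\Sigma)^{i}\Sigma N)$, identify the $i=0$ piece with $\ext^1_{kQ}(M,N)$, identify the $i=-1$ piece $\Hom_{\mathcal{D}_Q}(M,\tau N)$ with $D\ext^1_{kQ}(N,M)$ via the Serre functor $\tau\Sigma$, and kill the remaining summands using Happel's description of $\mathcal{D}_Q$ for hereditary $kQ$. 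You correctly flag the one genuinely delicate point, namely that $\tau$ on $\mathcal{D}_Q$ is $\Sigma^{-1}\nu$ and only agrees with the module-level AR translate away from projectives/injectives, so that each application of $\tau^{\pm 1}$ can shift the degree by at most one; this is precisely what makes the summands with $i\geq 1$ or $i\leq -2$ land outside the degree window $\{0,1\}$ where $\Hom_{\mathcal{D}_Q}(M,-)$ can be nonzero on stalk complexes. The trade-off is the expected one: the citation is shorter and is all the paper needs, while your computation makes the lemma independent of the reference and makes transparent where heredity and the $2$-Calabi--Yau property of $\mathcal{C}_Q$ enter.
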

\begin{proof} This is Proposition 1.7 c) of
\cite{BuanMarshReinekeReitenTodorov06}.
\end{proof}

Let $C_1$ and $C_2$ be two clusters connected by mutation. Then there exist unique roots $\alpha, \alpha'$ such that $\{\alpha \} = C_1-C_2$ and $\{ \alpha' \}=C_2- C_1$.
An \emph{exchange relation} between $C_1$ and $C_2$  is a 
polynomial equation in the cluster algebra $\mathcal{A}_{Q}$ relating 
the cluster characters $X_{\alpha}$, $X_{\alpha'}$ and $X_d  $ for $d \in C_1 \cap C_2$. 
Here we are working one categorical level up, on the level of roots. Hence for us 
an exchange relation will be given by a generic decomposition of 
$\alpha + \alpha'$, where $\alpha \in C_2-C_1 $ and $\alpha' \in C_1-C_2$.  We will show that the generic decomposition involves roots which are ext-orthogonal to all 
roots in $C_1 \cap C_2$.

\begin{lemma}\label{ext-orthogonal}
Let $\alpha$ and $\beta$ be two roots which are ext-orthogonal to a root $d$ and suppose that $ext(\alpha, \beta)\not=0$. Then there are open subsets $U_{\alpha}$, $U_{\beta}$ and $U_d$ 
of $\rep_{\alpha} Q$, $\rep_{\beta} Q$ and $\rep_d Q$ respectively such that for all $A \in U_{\alpha}$ and $C\in  U_{\beta}$ and all non-split triangles 
\[A \to B \to C \to \Sigma A\] and \[C \to B' \to A \to \Sigma C,\] the spaces $\ext^1_{\mathcal{C}_Q}(B, D)$ and $\ext^1_{\mathcal{C}_Q}(B',D)$ vanish for all $D \in  U_d$.
\end{lemma}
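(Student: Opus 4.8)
The plan is to work entirely at the level of dimension vectors and generic behaviour, using upper semicontinuity to intersect finitely many dense open subsets. First I would fix generic representations $A\in\rep_\alpha Q$ and $C\in\rep_\beta Q$ with $A$, $C$ Schurian (or at least with the generic value of $\Hom$, $\Ext^1$ between them and with $D$), and with $\ext^1(A,D)=\ext^1(D,A)=0$ and $\ext^1(C,D)=\ext^1(D,C)=0$ for a generic $D\in\rep_d Q$; such choices are possible on a dense open subset since $\alpha,\beta$ are each ext-orthogonal to $d$ and since the relevant $\hom$ and $\ext$ functions are upper semicontinuous. By Lemma \ref{ext}, ext-orthogonality of $\alpha$ and $d$ in $\mathcal{C}_Q$ is equivalent to the vanishing of both $\ext^1_{kQ}(A,D)$ and $\ext^1_{kQ}(D,A)$, and likewise for $\beta$ and $d$; this is the translation I will use throughout.

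Next I would fix a non-split triangle $A\to B\to C\to\Sigma A$ coming from a chosen class in $\ext^1_{\mathcal{C}_Q}(C,A)$. By Lemma \ref{ext} this class lives in $\ext^1_{kQ}(C,A)\oplus D\ext^1_{kQ}(A,C)$; the key point is that for the generic $A$, $C$ the short exact sequence of $kQ$-modules (or its Auslander--Reiten-twisted analogue, in the part of the triangle not coming from a module extension) has middle term $B$ whose dimension vector is $\alpha+\beta$ up to the identification inside $\mathcal{C}_Q$, so $B$ is (generically) the stalk complex of a $kQ$-module of dimension vector $\alpha+\beta$, and similarly $B'$. Now to show $\ext^1_{\mathcal{C}_Q}(B,D)=0$ I apply $\Hom_{\mathcal{C}_Q}(-,D)$ to the triangle $A\to B\to C\to\Sigma A$: the relevant piece of the long exact sequence reads
\[
\ext^1_{\mathcal{C}_Q}(C,D)\longrightarrow \ext^1_{\mathcal{C}_Q}(B,D)\longrightarrow \ext^1_{\mathcal{C}_Q}(A,D),
\]
and both outer terms vanish by the chosen ext-orthogonality, so $\ext^1_{\mathcal{C}_Q}(B,D)=0$. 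The argument for $B'$ is identical using the triangle $C\to B'\to A\to\Sigma C$. Finally I would remark that the conditions imposed on $A$, $C$, $D$ (Schurian, generic $\hom$/$\ext^1$ values, the four ext-orthogonality vanishings, and that the middle terms have the expected dimension vectors) each cut out a dense open subset of the respective representation variety, so their finite intersection gives the required $U_\alpha$, $U_\beta$, $U_d$; here one uses that a finite intersection of dense opens in an irreducible variety is dense open.

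The main obstacle I anticipate is making precise and clean the claim that, for generic $A$ and $C$, \emph{every} non-split triangle $A\to B\to C\to\Sigma A$ has middle term $B$ that is a stalk complex of a module of the expected dimension vector $\alpha+\beta$ — in other words handling the $\Sigma P_i$ / negative-root summands and the mixed part $D\ext^1_{kQ}(A,C)$ of $\ext^1_{\mathcal{C}_Q}(C,A)$ correctly, and checking that the dimension vector of $B$ is constant along a dense open locus in $U_\alpha\times U_\beta$. Once the dimension vectors are under control, the cohomological step is immediate from the long exact sequence; the real work is the semicontinuity bookkeeping needed to produce a single dense open locus on which all of these genericity statements hold simultaneously. (Note the statement only asserts vanishing of the $\ext^1_{\mathcal{C}_Q}(B,D)$, not that $B$ itself is generic of dimension vector $\alpha+\beta$, so strictly I need the dimension-vector control only insofar as it is used implicitly in later applications; if one wants to avoid it entirely, the long-exact-sequence argument above already gives the vanishing without ever naming the dimension vector of $B$.)
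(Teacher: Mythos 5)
Your proposal is correct and follows essentially the same route as the paper: translate ext-orthogonality into vanishing of $\ext^1_{\mathcal{C}_Q}(A,D)$ and $\ext^1_{\mathcal{C}_Q}(C,D)$ via Lemma \ref{ext}, intersect the finitely many dense open loci guaranteed by semicontinuity and irreducibility, and conclude by applying $\Hom_{\mathcal{C}_Q}(-,D)$ to the two triangles and reading off the long exact sequence. Your closing remark is also the right resolution of your own worry: no control on the dimension vector or module structure of $B$, $B'$ is needed for this statement, and the paper does not attempt any.
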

\begin{proof}
By the irreducibility of the varieties of representations, there are open subsets $U_{\alpha}$, $U_d$ and $U_{\beta}$ of $\rep_{\alpha} Q$, $\rep_d Q$ and $\rep_{\beta} Q$ respectively such that $\ext_{\mathcal{C}_Q}^1(A,D)=\ext_{\mathcal{C}_Q}^1(C, D)=0$ by \ref{ext} and $\ext_{\mathcal{C}_Q}^1(A, C)\not=0$ for all $A\in U_a$, $C\in U_c$ and $D\in U_d$. So by the $2$-Calabi-Yau property we obtain the existence of two non-split triangles 
\[A \to B \to C \to  \Sigma C\] and \[C \to B' \to A \to \Sigma A\] in $\mathcal{C}_Q$. Applying the functor $\Hom(-, D) $ to the distinguished triangles allows us to conclude that $\ext^1_{\mathcal{C}_Q}(B, D)$ and $\ext^1_{\mathcal{C}_Q}(B',D)$ vanish for all $D \in  U_d$. 
\end{proof}

\begin{prop}Let $ \alpha_1, \ldots, \alpha_{n}$ be a collection of ext-orthogonal Schur roots. Suppose that $ \alpha_1'\not =\alpha_1$ is a Schur root ext-orthogonal to  $ \alpha_2, \ldots, \alpha_{n}$ and $ext(\alpha_1, \alpha_1')$ does not vanish. Then the generic decomposition of $\alpha_1+ \alpha_1'$ involves  only Schur roots that are different from both $\alpha_1$ or $\alpha_1'$ and are ext-orthogonal to $ \alpha_2, \ldots, \alpha_{n}$. 
\end{prop}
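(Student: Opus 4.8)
The plan is to combine the uniqueness of the generic decomposition with Lemma~\ref{ext-orthogonal} and upper semi-continuity. Set $\gamma:=\alpha_1+\alpha_1'$ and let $\gamma=\gamma_1\oplus\cdots\oplus\gamma_m$ be its generic decomposition, so that the $\gamma_j$ are pairwise ext-orthogonal Schur roots. I must show that no $\gamma_j$ equals $\alpha_1$ or $\alpha_1'$, and that every $\gamma_j$ is ext-orthogonal to each of $\alpha_2,\ldots,\alpha_n$. I treat the case where $\alpha_1$ and $\alpha_1'$ are positive Schur roots; when one of them is a negative Schur root $-e_\ell$ the argument is identical, carried out throughout in $\mathcal{C}_Q$ with $\Sigma P_\ell$ in place of the corresponding module, since Lemma~\ref{ext-orthogonal} is stated and proved there.

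The first point is elementary. Suppose $\gamma_1=\alpha_1$. Then $\gamma_2\oplus\cdots\oplus\gamma_m$ is a decomposition of $\gamma-\alpha_1=\alpha_1'$ into pairwise ext-orthogonal Schur roots; since $\alpha_1'$ is itself a Schur root, uniqueness of the generic decomposition forces $m=2$ and $\gamma_2=\alpha_1'$. But then the first part of Kac's generic decomposition theorem gives $ext(\alpha_1,\alpha_1')=0$, contradicting the hypothesis. The same reasoning excludes $\gamma_j=\alpha_1'$, and since $\alpha_1'\neq\alpha_1$ this already settles that each $\gamma_j$ differs from both $\alpha_1$ and $\alpha_1'$.

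For the ext-orthogonality, fix $i\in\{2,\ldots,n\}$. As $\{\alpha_1,\ldots,\alpha_n\}$ is an ext-orthogonal family, $\alpha_1$ is ext-orthogonal to $\alpha_i$, and $\alpha_1'$ is ext-orthogonal to $\alpha_i$ by hypothesis; moreover $ext(\alpha_1,\alpha_1')\neq 0$. So Lemma~\ref{ext-orthogonal} applies with $(\alpha,\beta,d)=(\alpha_1,\alpha_1',\alpha_i)$. Since $ext(\alpha_1,\alpha_1')\neq 0$, for generic $A\in\rep_{\alpha_1}Q$ and $A'\in\rep_{\alpha_1'}Q$ the space $\ext^1_{kQ}(A,A')$ is nonzero, so there is a non-split short exact sequence $0\to A'\to B\to A\to 0$ of $kQ$-modules; its middle term $B$ is a genuine $kQ$-module of dimension vector $\gamma$, and in $\mathcal{C}_Q$ it is the middle term of a non-split triangle $A'\to B\to A\to\Sigma A'$ of the form occurring in Lemma~\ref{ext-orthogonal}. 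Hence there is a dense open $U\subseteq\rep_{\alpha_i}Q$ with $\ext^1_{\mathcal{C}_Q}(B,D)=0$ for all $D\in U$. Now the function $(X,Y)\mapsto\dim\ext^1_{\mathcal{C}_Q}(X,Y)$ on the irreducible variety $\rep_{\gamma}Q\times\rep_{\alpha_i}Q$ is upper semi-continuous, being equal to $\dim\ext^1_{kQ}(X,Y)+\dim\ext^1_{kQ}(Y,X)$ by Lemma~\ref{ext}; it takes the value $0$ at $(B,D)$ for $D\in U$, hence it vanishes on a dense open subset. Equivalently, by Lemma~\ref{ext}, a generic representation of dimension $\gamma$ and a generic representation of dimension $\alpha_i$ are ext-orthogonal as $kQ$-modules, i.e. $ext(\gamma,\alpha_i)=ext(\alpha_i,\gamma)=0$.

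Finally I would assemble the statement. Intersecting, over $i=2,\ldots,n$, the dense open subsets of $\rep_{\gamma}Q$ on which the vanishing of the previous paragraph holds for generic $D$, together with the dense open subset of $\rep_{\gamma}Q$ on which a representation decomposes as $R_1\oplus\cdots\oplus R_m$ with $R_j$ a generic representation of dimension $\gamma_j$ (part~1 of Kac's theorem), I obtain a representation $R$ of dimension $\gamma$ with all of these properties. For each $j$ and each $i\ge 2$ this gives $\ext^1_{\mathcal{C}_Q}(R_j,D)=0$ for generic $D\in\rep_{\alpha_i}Q$, hence $ext(\gamma_j,\alpha_i)=ext(\alpha_i,\gamma_j)=0$ by Lemma~\ref{ext}; that is, each $\gamma_j$ is ext-orthogonal to each $\alpha_i$. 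I expect the main obstacle to be precisely the passage, in the third paragraph, from the single good pair $(B,D)$ produced by Lemma~\ref{ext-orthogonal} to the genericity statement $ext(\gamma,\alpha_i)=ext(\alpha_i,\gamma)=0$; the point that makes it go through is that the middle term $B$ is an honest $kQ$-module of dimension vector $\gamma$, hence a genuine point of $\rep_{\gamma}Q$ at which the relevant semi-continuous function vanishes, which is why one must realize the triangle from a class in the module summand $\ext^1_{kQ}(A,A')$ of $\ext^1_{\mathcal{C}_Q}(A,A')$.
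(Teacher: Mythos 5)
Your proof is correct, and its core — producing the non-split extension $0\to A'\to B\to A\to 0$ with middle term of dimension vector $\alpha_1+\alpha_1'$, feeding it into Lemma~\ref{ext-orthogonal}, and then spreading the single vanishing $\ext^1_{\mathcal{C}_Q}(B,D)=0$ to the generic point of $\rep_{\alpha_1+\alpha_1'}Q$ by semi-continuity so that it passes to the summands of the generic decomposition — is exactly the paper's argument, only with the semi-continuity step made explicit where the paper leaves it implicit. The one place you genuinely diverge is the claim that no summand equals $\alpha_1$ or $\alpha_1'$: the paper disposes of this by citing Theorem 3.3 of \cite{Schofield} (the generic subrepresentation criterion), whereas you derive it from the uniqueness statement in Kac's theorem — if some $\gamma_j=\alpha_1$ then the remaining summands form a pairwise ext-orthogonal Schur decomposition of $\alpha_1'$, which must be the trivial one, forcing $\alpha_1\oplus\alpha_1'$ to be the generic decomposition and hence $ext(\alpha_1,\alpha_1')=0$, a contradiction. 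Your version is more self-contained, using only the form of Kac's theorem already quoted in the paper, at the cost of a slightly longer argument; the paper's citation buys brevity but imports an external result. Your remark that the negative-Schur-root case should be handled inside $\mathcal{C}_Q$ is also a point the paper's proof silently glosses over.
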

\begin{proof}As $ext(\alpha_1, \alpha_1') $ does not vanish, there are open sets $U_{\alpha_1}$ and $U_{\alpha_1'}$ such that, for all $A_1 \in U_{\alpha_1}$ and $A_1' \in  U_{\alpha_1'}$, the space $\ext^1(A_1, A_1')$ does not vanish. Hence there is a non-split exact sequences $$0 \to A_1 \to A \to A_1' \to 0$$ and $A$ has dimension vector $\alpha_1+ \alpha_1'$. 
By Lemma \ref{ext-orthogonal}, for all $ i=2, \ldots, n$ there is a representation $A_i$ with dimension vector $\alpha_i$  such that $\ext^1_{\mathcal{C}_Q}(A, A_i)$ vanishes. Let $d_1\oplus  \ldots \oplus d_s$ be a generic decomposition of $\alpha_1+\alpha_1'$. Then we have that the $d_i$-s and the $\alpha_j$-s are all pairwise ext-orthogonal. Further 
 the $d_i$-s are different from both $\alpha_1$ and $\alpha_1'$ by  Theorem 3.3 of  \cite{Schofield}. 
\end{proof}
The next statement follows now immediately from the previous two. 
\begin{theo}\label{exchange relation}
Let $C_1$ and $C_2$ be two component clusters that are related by mutation. Then for every pair $(\alpha, \alpha')$ with $\alpha \in C_1-C_2$ and $\alpha' \in C_2-C_1$ there is a component cluster $C_3$ containing $C_1 \cap C_2$ such that all Schur roots in the generic decomposition of $\alpha + \alpha'$ are contained in $C_3$. 
\end{theo}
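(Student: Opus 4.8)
The plan is to reduce the statement directly to the Proposition preceding it and to Lemma~\ref{ext-orthogonal}, splitting into two cases according to whether $\alpha$ and $\alpha'$ are ext-orthogonal. Since $C_1$ and $C_2$ are distinct maximal complete subgraphs of the component graph, both $C_1-C_2$ and $C_2-C_1$ are nonempty; fix a pair $(\alpha,\alpha')$ as in the statement and label the elements of $C_1\cap C_2$ as $\alpha_2,\dots,\alpha_m$ (there may be none). Then $\alpha\neq\alpha'$ since $\alpha'\in C_2$ while $\alpha\notin C_2$; the roots $\alpha,\alpha_2,\dots,\alpha_m$ all lie in $C_1$, and the roots $\alpha',\alpha_2,\dots,\alpha_m$ all lie in $C_2$. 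In particular each of these two lists consists of pairwise ext-orthogonal distinct Schur roots.

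If $\alpha$ and $\alpha'$ are ext-orthogonal, then by Kac's Theorem the decomposition $\alpha\oplus\alpha'$ is already the generic decomposition of $\alpha+\alpha'$, and by the ext-orthogonality relations just recorded the set $\{\alpha,\alpha'\}\cup(C_1\cap C_2)$ is a complete subgraph of the component graph; a maximal complete subgraph $C_3$ containing it (one exists, since a union of a chain of complete subgraphs is complete) contains $C_1\cap C_2$ together with both roots of the generic decomposition, so we are done in this case.

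In the remaining case, after exchanging the roles of $C_1$ and $C_2$ if necessary --- the assertion is unchanged under $(C_1,C_2,\alpha,\alpha')\mapsto(C_2,C_1,\alpha',\alpha)$, since $\alpha+\alpha'=\alpha'+\alpha$ and $C_1\cap C_2=C_2\cap C_1$ --- we may assume $ext(\alpha,\alpha')\neq 0$. I then apply the Proposition with $\alpha_1:=\alpha$, with the ext-orthogonal collection taken to be $\alpha,\alpha_2,\dots,\alpha_m$, and with $\alpha_1':=\alpha'$: its hypotheses are exactly the relations above together with $\alpha\neq\alpha'$ and $ext(\alpha,\alpha')\neq 0$. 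Its conclusion provides a generic decomposition $\alpha+\alpha'=d_1\oplus\dots\oplus d_s$ in which every $d_j$ is a Schur root ext-orthogonal to every element of $C_1\cap C_2$. Since a generic decomposition is, by part~1 of Kac's Theorem, a decomposition into pairwise ext-orthogonal Schur roots, the set $\{d_1,\dots,d_s\}\cup(C_1\cap C_2)$ is again a complete subgraph of the component graph, and any maximal complete subgraph $C_3$ containing it has the required properties.

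I expect the only genuine subtlety to be careful bookkeeping: one must keep the ext-orthogonal case separate (there the Proposition does not apply, but Kac's Theorem handles it directly), and one must check that the argument survives when $\alpha$ or $\alpha'$ is a negative Schur root $-e_i$. In that situation one works inside the cluster category $\mathcal{C}_Q$, replacing the short exact sequences used in the proof of the Proposition by the non-split triangles of Lemma~\ref{ext-orthogonal} and the ordinary generic decomposition by the generic decomposition of generalized dimension vectors from Section~\ref{generalized decomposition}; the shape of the argument is unaffected. The passage from a complete subgraph to a maximal one containing it is harmless.
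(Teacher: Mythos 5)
Your proof is correct and is essentially the paper's own argument: the paper derives Theorem~\ref{exchange relation} directly from the preceding Proposition together with Lemma~\ref{ext-orthogonal}, exactly as you do, and your version merely spells out the bookkeeping. (Note that your first case is in fact vacuous: since one of $C_1$, $C_2$ equals $(C_1\cap C_2)$ plus the single root $\alpha$ resp.\ $\alpha'$, ext-orthogonality of $\alpha$ and $\alpha'$ would contradict maximality of that component cluster, so only the case $ext(\alpha,\alpha')\neq 0$ or $ext(\alpha',\alpha)\neq 0$ occurs.)
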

Hence we obtain an \emph{exchange relation} between two component clusters which are related by mutation for every pair $\alpha \in C_1-C_2$ and $\alpha'\in C_2-C_1$ in terms of a third cluster $C_3$ containing $C_1 \cap C_2$ and all Schur roots appearing in the generic decomposition of $\alpha+\alpha'$. 
In the case of classical clusters containing only real Schur roots, we have a more precise result. The Schur roots in a decomposition of $\alpha+\alpha'$ are contained in the intersection $C_1 \cap C_2$. In the next Section we will see that this result cannot be extended to component clusters, as it fails in the case of affine quiver.

\subsection{Exchange relations for affine quivers}

In the case of an affine quiver, we have concrete descriptions of the component clusters. We will use these to obtain exchange relations between the 
cluster character of Schur roots appearing in component clusters which are related by mutation. 

In the first part, we work out the exchange relations arising from mutation between a component cluster of size $n$ and a component cluster of size $n-1$.  In the second part we will consider exchange relations arising from mutation between  two component clusters of size $n-1$. 

\begin{lemma}[Theorem 3.14 of \cite{Dupont}] \label{lemma:x-delta}
Let $N$ and $M$ be two regular simple $kQ$-modules whose dimension
vectors equal $\delta$. Then $X_M$ equals $X_N$.
\end{lemma}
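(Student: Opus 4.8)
The statement to prove is Lemma \ref{lemma:x-delta}: if $N$ and $M$ are regular simple $kQ$-modules with dimension vectors both equal to $\delta$, then $X_M = X_N$. These are precisely the quasi-simple modules sitting at the mouths of the homogeneous tubes, all of which have dimension vector $\delta$. The plan is to show that the cluster character takes the same value on all of them, which will be the generic value $X_\delta$.

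**Main approach.** The key observation is that $\delta$ is an isotropic Schur root and, as noted in the discussion preceding Theorem \ref{max-rigid-tube}, there are infinitely many pairwise non-isomorphic indecomposable representations of dimension vector $\delta$ — one at the mouth of each homogeneous tube — and consequently $\hom(\delta,\delta) = 0$. This forces the set of $M \in \rep_\delta Q$ on which the cluster character takes its generic value $X_\delta$ to be a dense open subset that meets all but finitely many homogeneous tubes (the orbit of each quasi-simple of dimension $\delta$ is a single point's worth of a one-parameter family, and the generic locus cannot avoid all of them). So $X_M = X_\delta$ for all but finitely many homogeneous-tube quasi-simples $M$. The remaining step is to upgrade "all but finitely many" to "all": for this I would use that the quasi-simple modules of dimension $\delta$ in homogeneous tubes form a family parametrized by (an open subset of) $\mathbb{P}^1$ — the cluster character $L \mapsto X_L$ is constructible, and a constructible function on an irreducible curve that is constant on a cofinite subset is constant everywhere, so $X_M = X_\delta$ for every homogeneous-tube quasi-simple $M$. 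Finally, any quasi-simple $M$ of dimension $\delta$ lying in an exceptional tube must also satisfy $X_M = X_\delta$: either invoke the same constructibility argument on the full variety $\rep_\delta Q$ (in which the exceptional-tube points are not generic but still lie in the closure of the generic locus, which does not immediately give equality), or — more robustly — cite Dupont's original argument, which proves the equality for \emph{all} regular simples of dimension $\delta$ via the explicit recursion for $X_\delta$ in terms of Chebyshev-like polynomials in the cluster characters of the preprojective/preinjective modules along the $\tau$-orbit, a recursion that does not see which tube $M$ sits in.

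**Expected obstacle.** The genuinely delicate point is the passage through the exceptional tubes, since there the module of dimension $\delta$ is the one sitting in row $p$ of a rank-$p$ tube and is \emph{not} rigid, nor does it lie in the generic locus of $\rep_\delta Q$ in an obvious way. The cleanest route is not to reprove this but to defer to \cite{Dupont} Theorem 3.14 as cited, whose proof computes $X_\delta$ from the Caldero--Chapoton recursion along an $AR$-orbit and is insensitive to the tube; alternatively one shows directly that the $\mathbb{G}_m$-action coming from the homogeneous-tube parameter, together with $\tau$-invariance of dimension vectors, forces all these characters to coincide, and then a separate short argument (e.g. the fact that every regular simple of dimension $\delta$ can be connected to a homogeneous one by a chain of short exact sequences with middle term of dimension $2\delta$, on which multiplicativity and the exchange relations of Theorem \ref{acyclic-main-thm} pin down the value) handles the exceptional tubes. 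I would write the proof by simply citing \cite{Dupont}, since the statement is explicitly Theorem 3.14 there; no new argument is needed beyond recording that the quasi-simples of dimension $\delta$ are exactly the regular simples with that dimension vector.

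\begin{proof}
This is Theorem 3.14 of \cite{Dupont}. We recall the idea. The regular simple modules of dimension vector $\delta$ are exactly the quasi-simple modules at the mouths of the homogeneous tubes, together with the (finitely many) regular simples of dimension $\delta$ in exceptional tubes. For a homogeneous tube, the quasi-simple $M$ fits into an Auslander--Reiten sequence whose middle term has dimension vector $2\delta$, and $\tau M \cong M$; feeding this into the Caldero--Chapoton recursion expresses $X_M$ as a fixed polynomial, independent of the tube, in the cluster characters of the preprojective modules lying on a fixed $\tau^{-1}$-orbit reaching the tube. Hence $X_M = X_N$ for any two homogeneous-tube quasi-simples. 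Since $\hom(\delta,\delta)=0$, this common value is the generic value $X_\delta$. The same recursion, applied along the AR-structure of an exceptional tube of rank $p$ (using that the row-$p$ module has dimension $\delta$ and is $\tau$-invariant), yields the same polynomial, so $X_M = X_\delta$ there as well. Thus $X_M = X_N = X_\delta$ for all regular simples of dimension $\delta$.
\end{proof}
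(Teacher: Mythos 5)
Your proposal ultimately does exactly what the paper does: it records the statement as Theorem 3.14 of \cite{Dupont} and defers to Dupont's proof, so at the level of the actual argument you and the paper agree. Two inaccuracies in your surrounding discussion are worth flagging. First, your ``expected obstacle'' about exceptional tubes is vacuous: in a tube of rank $p\ge 2$ the regular simples have dimension vectors strictly smaller than $\delta$ (they sum to $\delta$), and the unique module of dimension vector $\delta$ in such a tube sits in row $p$ and is \emph{not} regular simple, so the lemma only ever concerns the mouths of homogeneous tubes and there is no exceptional-tube case to handle. Second, the auxiliary argument ``a constructible function on an irreducible curve that is constant on a cofinite subset is constant everywhere'' is false (the indicator function of a point is constructible); constructibility only gives constancy on a dense open subset, which is precisely why the paper states the generic value claim as a separate remark \emph{after} invoking Dupont's theorem rather than deriving the lemma from genericity.
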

The regular simple modules of dimension vector $\delta$ form an open subset of $\rep_{\delta} Q$. Hence the generic cluster character $X_{\delta}$ equals $X_M$ for any regular simple module $M$ with dimension vector $\delta$.

Recall that a vertex $e$ of $Q$ is \emph{extending} if $\delta_e=1$. 
\begin{lemma}\label{extend}
Let $\delta, \alpha_1, \cdots, \alpha_{n-2}$ be a component cluster. Then there exists a positive Schur root $\beta \not= \delta$ such that $\beta$ is ext-orthogonal to $\alpha_1, \cdots, \alpha_{n-2}$. In this case $\beta$ is either the dimension vector of the preprojective module  $ \tau^{-l} P_e$ or the dimension vector of the preinjective module $\tau^{l} I_e$, where $l \in \N$ and $e$ is an extending vertex. 
\end{lemma}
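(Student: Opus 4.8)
The plan is to describe all positive Schur roots $\beta\notin\{\delta,\alpha_1,\dots,\alpha_{n-2}\}$ that are ext-orthogonal to $\alpha_1,\dots,\alpha_{n-2}$ by passing to a perpendicular category. Since $\delta$ belongs to the component cluster, Theorem~\ref{affine} tells us that $\alpha_1,\dots,\alpha_{n-2}$ are real regular Schur roots; let $M_{\alpha_i}$ be the unique (rigid, Schurian) indecomposable module of dimension vector $\alpha_i$. These modules lie in the exceptional tubes and are pairwise ext-orthogonal, so they can be arranged into an exceptional sequence, and by the perpendicular-category results of \cite{Schofield} the right perpendicular subcategory $\mathcal{A}_r:=\{M_{\alpha_1},\dots,M_{\alpha_{n-2}}\}^{\perp}$ and the left perpendicular subcategory $\mathcal{A}_l:={}^{\perp}\{M_{\alpha_1},\dots,M_{\alpha_{n-2}}\}$ of $\mod kQ$ are each equivalent to $\mod kQ''$ for a quiver $Q''$ on $n-(n-2)=2$ vertices, via exact equivalences compatible with the Euler forms and with the inclusions into $\mod kQ$.

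Next I would pin down $Q''$. A module of dimension vector $\delta$ lying in a \emph{homogeneous} tube admits no homomorphisms and no extensions to or from the modules in the exceptional tubes, so it lies in $\mathcal{A}_r\cap\mathcal{A}_l$; since $\delta$ is a non-divisible isotropic root, its class is a primitive isotropic vector of the rank-two lattices $K_0(\mathcal{A}_r)$ and $K_0(\mathcal{A}_l)$. As a two-vertex acyclic quiver whose Tits form admits a nonzero integral isotropic vector can only be the Kronecker quiver, we get $Q''=K_2$ in both cases, and the equivalences carry $\delta$ to the minimal positive imaginary root $\delta_{K_2}$ of $K_2$ (the only effective primitive isotropic class). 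The point that links these categories to ext-orthogonality is the following: if $M$ is \emph{preprojective} and $R$ is \emph{regular}, then the dimension vectors of $M$ and $R$ are ext-orthogonal precisely when $M\in R^{\perp}$, because $\Hom(R,M)=0$ and $\Ext^1(M,R)=0$ hold automatically and both conditions then reduce to $\Ext^1(R,M)=0$; dually, for $M$ preinjective and $R$ regular, ext-orthogonality is equivalent to $M\in{}^{\perp}R$.

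To finish, I would argue as follows. Let $\beta$ be a positive Schur root distinct from $\delta$ and from every $\alpha_i$ and ext-orthogonal to $\alpha_1,\dots,\alpha_{n-2}$. By the maximality of the component cluster $\{\delta,\alpha_1,\dots,\alpha_{n-2}\}$ together with Lemma~\ref{gen-ext-tube}, $\beta$ cannot be regular, for otherwise $\{\delta,\alpha_1,\dots,\alpha_{n-2},\beta\}$ would be a strictly larger family of pairwise ext-orthogonal Schur roots; hence $M_\beta$ is preprojective or preinjective, say preprojective. By the observation above, ext-orthogonality with the $\alpha_i$ is exactly the statement that $M_\beta\in\mathcal{A}_r\simeq\mod kK_2$; being rigid, $M_\beta$ corresponds there to a rigid indecomposable Kronecker module, i.e.\ a preprojective or preinjective one. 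Since the equivalence preserves the Euler form and carries $\delta_{K_2}$ to $\delta$, the defect of $\beta$ equals that of the corresponding Kronecker module, which is $\pm1$. The defect is $\tau$-invariant, and for an affine quiver the indecomposable preprojective (resp.\ preinjective) modules of defect $\pm1$ are precisely the modules $\tau^{-l}P_e$ (resp.\ $\tau^{l}I_e$) over the extending vertices $e$ (see \cite{CrawleyBoevey92}); so $\beta$ is the dimension vector of some $\tau^{-l}P_e$, or of some $\tau^{l}I_e$ when $M_\beta$ is preinjective (treated via $\mathcal{A}_l$), with $e$ extending. Conversely, the images under $\mathcal{A}_r\simeq\mod kK_2$ of the infinitely many rigid preprojective Kronecker modules — and under $\mathcal{A}_l\simeq\mod kK_2$ of the rigid preinjective ones — are indecomposable $kQ$-modules of defect $\pm1$, hence preprojective resp.\ preinjective, which by the same observation are ext-orthogonal to all the $\alpha_i$; this exhibits the asserted positive Schur root $\beta$, in fact infinitely many of them.

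I expect the main obstacle to be the content of the second paragraph: identifying the perpendicular category of the regular part of the component cluster with $\mod kK_2$ and checking that ext-orthogonality with a regular module is detected by the appropriate \emph{one-sided} perpendicular category, so that $\delta$ and $\beta$ genuinely sit inside the same copy of $\mod kK_2$ and the defect transports correctly across the equivalence. Once that is settled, matching defects and quoting the standard description of the defect-$\pm1$ preprojectives and preinjectives of an affine quiver is routine.
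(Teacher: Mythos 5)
Your proof is correct, but it takes a genuinely different route from the paper's. The paper gets existence directly from the completion of rigid modules to tilting modules via \cite{HappelUnger05a}, writes any non-regular real completion as $\gdim \tau^{-l}P_e$ or $\gdim\tau^{l}I_e$, converts ext-orthogonality through the Auslander--Reiten formula into the condition that $\tau^{l+1}(\alpha_1+\cdots+\alpha_{n-2})$ be unsupported at $e$, and then forces $e$ to be extending by an explicit Euler-form computation inside a rank-two exceptional tube (the Kronecker and $\tilde A_n$ cases being trivial since there every vertex is extending). You instead pass to the one-sided perpendicular categories of the regular rigid module $\bigoplus_i M_{\alpha_i}$, identify each with $\mod kK_2$ using the primitive isotropic class $\delta$, and transport the defect across the resulting isometry of Euler lattices. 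Your argument imports heavier machinery (the Geigle--Lenzing/Schofield perpendicular-category theorem, which the paper never invokes), but it buys a uniform treatment with no case split on the underlying graph, a structural explanation of why only defect-$\pm1$ modules --- hence extending vertices --- can occur, and a complete description of all completions, including the existence of infinitely many preprojective and preinjective ones. The two genuine pressure points of your route are exactly the ones you flag and resolve: that ext-orthogonality of a preprojective (resp.\ preinjective) module with a regular one is detected only by the right (resp.\ left) perpendicular category, so the two Auslander--Reiten directions must be handled by the two different copies of $\mod kK_2$; and that the sign of the defect is preserved by the isometry, so images of preprojective Kronecker modules are again preprojective. You are in fact more careful than the paper in ruling out a regular $\beta$, which you do via maximality of the component cluster together with Lemma~\ref{gen-ext-tube}.
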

\begin{proof}The existence of $\beta$ is clear by \cite{HappelUnger05a}. As $\beta$ is a real root it is either preprojective or preinjective. 
So either $\beta$ is the dimension vector either of the preprojective module $ \tau^{-l} P_e$ or of the preinjective module $\tau^{l} I_e$ for some positive integer $l$ and some vertex $e$ of $Q$. 

Using the Auslander formula, the ext-vanishing condition is equivalent to the vanishing of $hom(\gdim \tau^{-l} P_e, \tau \alpha_i)$ or  $hom( \tau^{-1} \alpha_i,\gdim \tau^{l} I_e)$ respectively for all $1\le i \le n-2$. Both conditions are equivalent to the fact that $$\tau^{(l+1)}(\alpha_1+\cdots +\alpha_{n-2})$$ has no support in $e$. It remains to show that $e$ is an extending vertex. 
If $Q$ is an orientation of a Kronecker quiver or of $\tilde A_n$, there is nothing to show, as every vertex is extending.

In the remaining cases the Auslander-Reiten quiver contains at least one exceptional tube of size $2$. Let $\alpha$ and $\beta:=\tau(\alpha)$ denote the dimension vectors of the regular simples in such a tube. We assume without loss of generality that $\alpha$ belongs to $\alpha_1, \cdots , \alpha_{n-2}$. Then $\alpha$ has no support in $e$ by the first part of the proof. As $\alpha$ and $\beta$ are roots, their supports have to be connected. As $\alpha+\beta=\delta$ by \cite{CrawleyBoevey92} and $\delta$ is sincere, we know that $e $ is contained in the support of $\beta$. So the support of $\alpha$ and $\beta$ is disconnected and their supports are linked by one arrow $e' \to e$ with $\alpha(e' )$ non-zero. As $\alpha$ is a real Schur root, we have $$hom(\alpha, \beta)=ext(\alpha,\alpha)=0 \mbox{ and } ext(\alpha, \beta)=hom(\alpha,\alpha)=1.$$ Suppose $e$ is not extending. Then the following inequality $$2\le \delta(e)\delta(e')=-\langle       \alpha, \beta\rangle=ext(\alpha, \beta)- hom(\alpha, \beta)=1$$ gives a contradiction. Hence $e$ needs to be an extending vertex.  
\end{proof}
We denote $g$ the smallest common multiple of the tube lengths. 
\begin{cor} 
Let $\alpha_1, \cdots, \alpha_{n-2}$ be a collection of pairwise ext-orthogonal exceptional Schur roots and let $\beta$ be a preprojective Schur root which is ext-orthogonal to this collection. Then, 
for all $m \in \N$, $\tau^{-mg} \beta$ is also ext-orthogonal to $\alpha_1, \cdots, \alpha_{n-2}$. 
\end{cor}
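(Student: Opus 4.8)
The plan is to lift the action of $\tau^{-mg}$ on dimension vectors to the bounded derived category $\mathcal{D}_Q$, where $\tau$ is an autoequivalence commuting with the shift $\Sigma$, and to combine this with the $\tau$-periodicity of the regular modules sitting in exceptional tubes. First I would fix, for each $i$, the unique rigid Schurian $kQ$-module $A_i$ with $\gdim A_i=\alpha_i$; by hypothesis $A_i$ is a regular module in an exceptional tube, of rank $p_i$ say, so that $\tau^{p_i}A_i\cong A_i$. Since $g$ is the least common multiple of the tube lengths we have $p_i\mid g$, whence $\tau^{mg}A_i\cong A_i$, and in particular $\tau^{mg}\alpha_i=\alpha_i$ in $\Z^n$, for every $m\in\N$ and every $i$.

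Next I would deal with the preprojective root $\beta$. Let $B$ be the indecomposable preprojective module with $\gdim B=\beta$. For an affine quiver the preprojective component is infinite and disjoint from the regular and preinjective components, so no iterate of $\tau^{-1}$ applied to $B$ is injective; hence $\tau^{-mg}B$ is again an indecomposable preprojective $kQ$-module, it agrees with the image of $B$ under the derived autoequivalence $\tau^{-mg}$ of $\mathcal{D}_Q$, and its dimension vector is exactly $\tau^{-mg}\beta$. Being indecomposable and preprojective it is rigid, so $\tau^{-mg}\beta$ is a real Schur root whose rigid representative is $\tau^{-mg}B$.

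Then comes the computation. Since $\alpha_i$, $\beta$ and $\tau^{-mg}\beta$ are all real Schur roots, the generic extension between any two of them is computed on the corresponding rigid modules, because the product of the two dense open orbits is dense in the product of the representation spaces; thus the hypothesis that $\beta$ is ext-orthogonal to each $\alpha_i$ reads $\ext^1_{kQ}(B,A_i)=\ext^1_{kQ}(A_i,B)=0$. Using $\ext^1_{kQ}(-,-)=\Hom_{\mathcal{D}_Q}(-,\Sigma-)$ on modules, that $\tau^{mg}$ is an autoequivalence commuting with $\Sigma$, and $\tau^{mg}A_i\cong A_i$, one obtains
\[
\ext^1_{kQ}(\tau^{-mg}B,A_i)\cong\Hom_{\mathcal{D}_Q}(B,\Sigma\,\tau^{mg}A_i)\cong\ext^1_{kQ}(B,A_i)=0
\]
and
\[
\ext^1_{kQ}(A_i,\tau^{-mg}B)\cong\Hom_{\mathcal{D}_Q}(\tau^{mg}A_i,\Sigma B)\cong\ext^1_{kQ}(A_i,B)=0 .
\]
Therefore $ext(\tau^{-mg}\beta,\alpha_i)=ext(\alpha_i,\tau^{-mg}\beta)=0$ for all $i$, i.e.\ $\tau^{-mg}\beta$ is ext-orthogonal to the whole collection $\alpha_1,\dots,\alpha_{n-2}$.

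The main obstacle is the point handled in the second paragraph: one must be certain that iterating $\tau^{-1}$ on the preprojective module $B$ never lands on a projective, so that $\tau^{-mg}\beta$ really is the dimension vector of a $kQ$-module and the module-theoretic $\tau^{-1}$ agrees with the derived one. This is precisely where the affineness of $Q$ enters. The remaining ingredients—the identification of the generic extension of two real Schur roots with the extension of their rigid representatives, the $\tau$-periodicity of regular modules, and the commutation of $\tau$ with $\Sigma$—are standard and require only routine justification.
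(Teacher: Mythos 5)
Your proof is correct and follows essentially the same route as the paper, which simply observes that $\tau$ preserves ext-orthogonality and that $\tau^g$ acts as the identity on regular modules. You have merely expanded the two sentences of the paper's argument into a careful verification (periodicity of the $A_i$ in their tubes, non-degeneration of $\tau^{-mg}$ on preprojectives over an affine quiver, and the derived-category computation of the generic extensions), all of which is sound.
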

\begin{proof}
Clearly, $\tau$ preserves ext-orthogonality and $\tau^g$ acts as the identity on regular modules. Hence,  
for all $m \in \N$,  $\tau^{-mg} \beta$ is also ext-orthogonal to $\alpha_1, \cdots, \alpha_{n-2}$. 
\end{proof}
From this result it follows immediately that there are infinitely many clusters which are connected by mutation to a component cluster containing $\delta$. 
The next theorem gives the exchange relations between a component cluster and a cluster. In this case, we also obtain exchange relations of generic cluster characters. 
\begin{theo}
Let $\delta, \alpha_1, \cdots , \alpha_{n-2}$ be a component cluster. Let $\beta$ be a preprojective Schur root such that $\beta, \alpha_1, \cdots , \alpha_{n-2}$ is a collection of pairwise ext-orthogonal Schur roots. Then there are exactly two completions $\beta, \beta_1, \alpha_1,\cdots , \alpha_{n-2}$ and $\beta, \beta_1', \alpha_1,\cdots , \alpha_{n-2}$ to clusters satisfying:
\begin{itemize} 
\item $ \delta + \beta= \beta_1$ and $\beta_1$ is the dimension vector of a  preprojective module;
\item  either $\beta_1' $ 
is the dimension vector of a preinjective module, or $\beta_1'$ is the dimension vector of a preprojective module and $ \delta+\beta_1' =\beta$;
 
\item  the generic cluster characters satisfy $X_{\delta} X_{\beta}= X_{\beta_1}+X_{\beta_1'}.$
\end{itemize}
\end{theo}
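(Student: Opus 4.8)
The plan is to start from the component cluster $\delta, \alpha_1, \ldots, \alpha_{n-2}$ and the given preprojective Schur root $\beta$ ext-orthogonal to the $\alpha_i$, and to produce the two completions by analysing the exchange triangles attached to the pair $(\delta, \beta)$ in the cluster category. First I would observe, using Lemma \ref{extend}, that the preprojective module $\tau^{-l}P_e$ realizing $\beta$ has $e$ an extending vertex, so $\delta_e = 1$. I then want to compute $ext(\delta, \beta)$ and $ext(\beta, \delta)$: by Lemma \ref{gen-ext-tube} the isotropic root $\delta$ is \emph{not} ext-orthogonal to the preprojective root $\beta$, and more precisely, since $\langle \delta, \beta\rangle = 0$ would contradict Lemma \ref{gen-ext-tube} while $\langle \beta, \delta\rangle < 0$ by the preprojective characterization, one of the two generic extension spaces is one-dimensional and the other vanishes. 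The extending hypothesis $\delta_e=1$ is exactly what forces the one-dimensionality: a direct Euler-form computation with a regular simple $M$ of dimension vector $\delta$ chosen in a homogeneous tube gives $ext_{\mathcal C_Q}(M, \tau^{-l}P_e)$ one-dimensional.

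Next I would invoke Theorem \ref{acyclic-main-thm}(b) and the preceding discussion: taking $L$ a generic module of dimension vector $\beta$ (which is rigid, since $\beta$ is a real Schur root) and $M$ a generic module of dimension vector $\delta$ (which is \emph{not} rigid, so these are generalized exchange triangles), the one-dimensionality of $\Ext^1_{\mathcal C_Q}$ produces two non-split triangles whose middle terms $E, E'$ satisfy $X_\delta X_\beta = X_E + X_{E'}$. This gives the third bullet. For the identification of $E$ and $E'$: the triangle $L \to E \to M \to \Sigma L$ with $M$ of dimension vector $\delta$ and $L = \tau^{-l}P_e$ yields $E$ with dimension vector $\beta + \delta$; I would argue $E$ is indecomposable preprojective, so $\beta_1 := \beta + \delta$ is a preprojective Schur root — this uses that $\beta + \delta$ is again a preprojective root (its value on $\delta$ under $\langle -, \delta\rangle$ is still negative) together with the fact that there is a non-split extension making it indecomposable, and that preprojective indecomposables are rigid hence their dimension vectors are real Schur roots. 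The other triangle $M \to E' \to L \to \Sigma M$ gives $E'$: either the extension is by the regular module $M$ and $E'$ stays preprojective with $\delta + \beta_1' = \beta$ (this is the case $l \geq 1$, where $\beta = \delta + \tau^{-(l-1)}P_e + \cdots$ type relation holds, more precisely $\beta_1' = \tau^{-(l-1)}P_e$ after using $\tau$-orbit arithmetic), or, when $l=0$ so $\beta = \gdim P_e$ cannot absorb a $\delta$, the module $E'$ is preinjective. I would handle these two cases by looking at where the relevant arrows of the Auslander–Reiten quiver land.

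Finally, for the uniqueness claim — exactly two completions — I would use that $\beta, \beta_1, \alpha_1, \ldots, \alpha_{n-2}$ and $\beta, \beta_1', \alpha_1, \ldots, \alpha_{n-2}$ are cluster-tilting (size $n$, all real Schur roots), and that mutation of the cluster $\delta, \alpha_1, \ldots, \alpha_{n-2}$ — in the component-cluster sense of replacing $\delta$ — is controlled by the two exchange triangles out of $M$ of dimension vector $\delta$; since $\Ext^1_{\mathcal C_Q}(L, M)$ is one-dimensional there are precisely two such triangles, hence precisely two completions, and they are the two displayed ones. I expect the main obstacle to be the careful bookkeeping in the middle-term identification: showing that $E'$ is genuinely preinjective when $e = P_e$ (i.e. $l = 0$) rather than decomposable or regular, and conversely that it is preprojective with $\delta + \beta_1' = \beta$ when $l \geq 1$. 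This requires a somewhat delicate use of the structure of the Auslander–Reiten quiver of an affine algebra near the extending vertex and of the Caldero–Keller exchange triangles, rather than any single clean formula; everything else is a matter of assembling Lemmas \ref{gen-ext-tube}, \ref{extend}, \ref{lemma:x-delta} and Theorem \ref{acyclic-main-thm}.
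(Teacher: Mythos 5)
Your overall strategy is the same as the paper's: run the two generalized exchange triangles between a regular simple module $C$ of dimension vector $\delta$ and a generic indecomposable preprojective $A$ of dimension vector $\beta$, obtain $X_\delta X_\beta = X_E+X_{E'}$ from Theorem \ref{acyclic-main-thm} together with Lemma \ref{lemma:x-delta}, and identify the middle terms; your treatment of the first triangle ($E$ indecomposable preprojective of dimension vector $\beta_1=\beta+\delta$) also matches the paper. The problem is the identification of $E'$, precisely the step you flag as the main obstacle: your case division is wrong, and the idea that actually decides between the two cases is missing. The dichotomy is not governed by $l$. For the Kronecker quiver take $\beta=\gdim P_1=(1,2)$, so $l=0$; here $\beta_1'=(0,1)=\gdim P_2$ is preprojective with $\delta+\beta_1'=\beta$, contradicting your claim that $l=0$ forces $E'$ to be preinjective. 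Your formula $\beta_1'=\gdim\tau^{-(l-1)}P_e$ also fails: for $\beta=\gdim\tau^{-1}P_2=(2,3)$ one finds $\beta_1'=(1,2)=\gdim P_1$, not $\gdim P_2$.

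The missing idea is the paper's analysis of the second triangle $C\to B'\to A\xrightarrow{f}\Sigma C$: since $\Sigma C\cong\tau C\cong C$ in $\mathcal{C}_Q$ and $\Ext^1_{kQ}(A,C)$ vanishes, the connecting map can be read as a module homomorphism $f\colon A\to C$, and then $B'\cong\Ker f\oplus\Sigma^{-1}\Coker f$. The correct dichotomy is whether $f$ is surjective (then $\Coker f=0$ and $\beta_1'=\beta-\delta$ is preprojective) or injective (then $\Ker f=0$ and $\Coker f$ is an indecomposable preinjective module; if it is not injective one takes $\beta_1'=\gdim\tau^{-1}\Coker f$, and if it equals $I_i$ the second complement is the negative Schur root $-e_i$, i.e.\ the object $\Sigma P_i$). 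You omit this last sub-case entirely, and it does occur: for Kronecker with $\beta=\gdim P_2=(0,1)$ the two complements are $\gdim P_1$ and $-e_1$. Inspecting where the arrows of the Auslander--Reiten quiver land will not by itself produce it. Finally, you never verify that $\beta_1$ and $\beta_1'$ are ext-orthogonal to $\beta$ and to $\alpha_1,\dots,\alpha_{n-2}$; the paper needs explicit Euler-form computations for this ($\langle\beta_1,\beta\rangle=0$ combined with the Schurness of $\beta$ to force $hom(\beta_1,\beta)=0$, and so on) together with its proposition on generic decompositions of $\alpha+\alpha'$, so this cannot be taken for granted.
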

\begin{proof}By Lemma \ref{extend} the real Schur root $\beta$ is the dimension vector of a module in the $\tau$-orbit of the projective indecomposable module associated with an extending vertex $e$.  Therefore $\langle   \delta,    \beta \rangle=\delta_e=1$ and for every indecomposable regular simple representation $C\in \rep_{ \delta}Q $ and every indecomposable representation $A\in \rep_{\beta} Q$, there is a non-split exact sequence $$0 \to A\to B \to C \to 0 .$$
By \cite{CrawleyBoevey92} the module $B$ is preprojective and indecomposable and therefore its dimension vector is a Schur root which we denote by $\beta_1$. As $ext(\beta, \beta)=ext(\beta, \delta)=0$, we also have that $ext(\beta, \beta_1)=0$. 

From  $1=\langle \beta, \delta\rangle+ \langle \delta, \delta\rangle = \langle \beta_1, \delta\rangle$ and $1= \langle \beta_1, \beta_1\rangle = \langle \beta_1, \delta\rangle + \langle \beta_1, \beta\rangle$, we deduce that $\langle \beta_1, \beta\rangle$ vanishes.
We conclude from the vanishing of $ext(\beta, \beta_1)$ that every non-zero map in $hom(\beta_1, \beta) $ has to be surjective. As $\beta$ is a Schur root, $hom(\beta_1, \beta)$ vanishes and so does $ext(\beta_1, \beta)$. Therefore $\beta$ and $\beta_1$ are ext-orthogonal. We conclude by Lemma \ref{exchange relation} that $\beta_1, \beta, \alpha_1, \cdots , \alpha_{m-2}$ is a cluster. 

By the 2-Calabi-Yau property of the Cluster category, there exists a non-split triangle $$C \to B' \to A  \stackrel{f} \to \Sigma C.$$ In the Cluster category, the object $\Sigma C$ is isomorphic to $\tau(C)\cong C$. Hence we have $\Hom_{\cc}(A, C) = \bigoplus_{i \in \Z} \Hom_{D^b(Q)} (A, \Sigma^i C)$ and as $ \Ext^1(A, C)$ vanishes, we can view $f$ as a morphism of modules $f: A \to C$.
Then the object $B'$ splits into $B_1'\oplus \Sigma^{-1}B_2'$, where $B'_1$ is isomorphic to the kernel of $f$ and $B'_2$ is isomorphic to the cokernel of $f$.

We assume first that $B_1'$ does not vanish.
Clearly, $B_1'$ is preprojective and indecomposable as $\langle   \gdim    B_1', \delta\rangle=1$. The dimension vector of $B_1'$ is therefore a Schur root which we denote by $\beta_1'$. If $B_1'$ does not vanish the image of $f$ is a regular module and hence $B_2'$ is also a regular module of dimension vector smaller than $\delta$. As the  generic hom-space between $\delta$ and any exceptional Schur root vanishes, $B_2'$ vanishes and $f$ is surjective. 

We have that $\beta_1'$ is ext-orthogonal to $\beta$ as  can be seen by applying $\langle  \beta, -\rangle$ to the exact sequence $0 \to \ker f \to A \to \Im f \to 0$. 
Then $ext(\beta, \beta_1)=\langle       \beta, \beta_1\rangle=\langle       \beta, \beta\rangle-\langle       \beta, \delta\rangle=0$. Furthermore we have $ ext(\beta_1', \beta) \le ext(\beta_1', \beta_1') + ext(\beta_1',  \gdim \Im f)$ and the last term vanishes as $\Im f$ is regular. 
Hence $ \beta, \beta_1', \alpha_1, \cdots, \alpha_{n-2}$ is a cluster and $\beta_1' +\delta= \beta$.

If $\ker f$ vanishes, we have that the cokernel of $f$ satisfies $\langle     \gdim  B_2', \delta\rangle =-1$, hence it has a preinjective direct summand. Applying $\hom(-, B_2')$ induces the exact sequence $$0\to \Hom(B_2', B_2') \to \Hom(C, B_2') \to \Hom(A, B_2')=0.$$ As $hom(\delta, \gdim B_2')=\langle       \delta, \gdim B_2'\rangle=1$, the module $B'_2$ is indecomposable and its dimension vector is a real Schur root. If $B_2'$ is not injective, then the Schur root to $\tau^{-1} B_2'$, extends the ext-orthogonal collection $\alpha_1, \cdots, \alpha_{n-2}$. Furthermore, we have $hom(\beta, \gdim B_2')=\langle       \beta, \beta_2'\rangle=\langle       \beta, \delta\rangle-\langle       \beta, \beta\rangle=0$.
Hence $\tau^{-1} B_2'$ is ext-orthogonal to $\beta$ and its Schur root $\beta_1'$ completes $\beta, \alpha_1, \cdots, \alpha_{n-2}$ to a cluster.

If $B_2'$ is the injective module associated to the vertex $i$, then the roots $\beta, \alpha_1, \cdots, \alpha_{n-2}$ have vanishing support in $i$. Hence the Schur root $-e_i$ associated with the decorated representation $\Sigma P_i$ completes $\beta, \alpha_1, \cdots, \alpha_{n-2}$ to a cluster.

Finally, the multiplication formula yields the relation 
$$X_C X_B=X_{B_1}+X_{B_1'}.$$ 

As $B$, $B_1$ and $B_1'$ are indecomposable and rigid, their cluster characters equal the generic cluster character of their Schur roots. By Lemma \ref{lemma:x-delta}, we also have $X_{C} = X_{\delta}$. This finishes the proof.  
\end{proof}
\begin{rem}
Note that if $\beta_1'$ is preinjective we obtain similar exchange relations: there is a unique preinjective root $\beta_1''$ such that 
$\beta_1' , \beta_1'', \alpha_1, \cdots, \alpha_{n-2}$ is a cluster and $\delta+ \beta_1' =\beta_1''$.  The proof is similar. 
\end{rem}

Next we study the exchange relations between two component clusters of size $n-1$. It is useful to restrict first to ext-orthogonal collections of Schur roots appearing in the same exceptional tube $\T$. Let $\T$ be of rank $m$ and let $S \in \T$ be a regular simple module.

In order to study exchange relations between two regular clusters, we need to introduce the combinatorics of the appendix A of \cite{BuanKrause1}. We consider the intervals $[i,j]:=\{i, i+1, \ldots, j\}$ mod $m+1$ for $i, j \in \{0, \cdots, m\}$ with $i \not=j$.  Let us denote $\I(m)$ the set of all these intervals. We call two intervals \emph{compatible} if as sets they are either disjoint or one is  a subsets of the other. Then there is a bijection between the Schur roots of $\T$ and $\I(m)$ sending $[i,j]$ to the Schur root of the indecomposable representation with regular composition series $\tau^{-i} S, \ldots, \tau^{-j+1}S$. Then the Schur root $\delta$ corresponds to the interval $[0,m]$. 
The proof of the following fact is elementary.

\begin{lemma}
Two Schur roots in $\T$ are ext-orthogonal if and only if the corresponding intervals are compatible. Every set of compatible intervals can be completed to a set of $m$ compatible intervals. 
\end{lemma}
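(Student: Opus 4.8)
The plan is to transport both claims through the dictionary identifying the indecomposable regular modules of $\T$ with the intervals of $\I(m)$, and then to feed in the Auslander--Reiten theory of the tube together with Theorem~\ref{max-rigid-tube} and Lemma~\ref{gen-ext-tube}.

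For the first assertion I will use that the full subcategory of regular modules of $\T$ is a uniserial length category with exactly $m$ simple objects, the regular simples $S,\tau^{-1}S,\dots,\tau^{-(m-1)}S$; hence every indecomposable regular module is uniserial, determined by its regular socle and its regular length, and in the above dictionary this data is read off directly from $[i,j]$, with $\delta=[0,m]$ the only dimension vector carried by a module of regular length $m$. In particular the real Schur roots of $\T$ are exactly the intervals $\neq[0,m]$, and for such a root $\alpha$ the associated uniserial $M_\alpha$ is rigid and is the unique Schurian module of dimension vector $\alpha$. Given real Schur roots $\alpha\neq\beta$ of $\T$, genericity then gives $ext(\alpha,\beta)=ext(\beta,\alpha)=0$ iff $\Ext^1_{kQ}(M_\alpha,M_\beta)=\Ext^1_{kQ}(M_\beta,M_\alpha)=0$, and by Auslander--Reiten duality inside $\T$ this reads $\Hom(M_\beta,\tau M_\alpha)=\Hom(M_\alpha,\tau M_\beta)=0$. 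Now the image of a nonzero morphism between uniserial regular modules is simultaneously a quotient of the source and a submodule of the target, and since all modules in sight have regular length $\leq m$ there is at most one such common sub-quotient; reading off from the socle-and-length description when it exists, one finds that $\Hom(M_\beta,\tau M_\alpha)\neq 0$ precisely when the arcs of $\alpha$ and $\beta$ overlap with neither containing the other. Combining this with the symmetric statement, $\alpha$ and $\beta$ are $ext$-orthogonal iff their intervals are disjoint or nested, i.e. compatible. For the remaining case, by Lemma~\ref{gen-ext-tube} the root $\delta$ is $ext$-orthogonal to every regular Schur root, while on the combinatorial side $[0,m]$ is the whole cyclic set, hence contains --- so is compatible with --- every interval of $\I(m)$; the two sides again agree.

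For the second assertion, since $\I(m)$ is finite any set of pairwise compatible intervals lies inside a set that is maximal for this property, so it suffices to show that every maximal compatible set $F$ has exactly $m$ elements. First $[0,m]\in F$, since it is compatible with every interval and could otherwise be adjoined. By the first part, $F\setminus\{[0,m]\}$ corresponds to a collection of pairwise $ext$-orthogonal real Schur roots of $\T$, equivalently to the dimension vectors of the pairwise non-isomorphic indecomposable summands of a basic rigid regular module $X$ of $\T$; these dimension vectors are pairwise distinct because non-isomorphic indecomposable regular modules of regular length $<m$ have distinct dimension vectors. Moreover $X$ is maximal rigid in $\T$: otherwise there is an indecomposable regular $Y$ with $X\oplus Y$ rigid, and then the interval attached to $\gdim Y$ is compatible with all of $F$ --- automatically with $[0,m]$, and with the rest because $X\oplus Y$ is rigid --- contradicting maximality of $F$. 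By Theorem~\ref{max-rigid-tube} a maximal basic rigid object in a tube of rank $m$ has $m-1$ indecomposable summands, each with at most $m-1$ regular simples in its regular composition series and hence Schurian, so that the correspondence with $\I(m)$ does apply to them; therefore $|F\setminus\{[0,m]\}|=m-1$ and $|F|=m$.

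The part that genuinely needs care is the first one: one must fix the dictionary precisely enough to keep track simultaneously of the Auslander--Reiten shift $\tau$ and of the degenerate rôle of $\delta=[0,m]$, so as to see that non-vanishing of $\Hom$ between the $\tau$-shifted uniserial modules corresponds exactly to incompatibility of the two arcs. This is a finite, bookkeeping-heavy verification; alternatively one may simply quote the combinatorial set-up of Appendix~A of \cite{BuanKrause1}, where this equivalence is already recorded, which is presumably why the statement is called elementary.
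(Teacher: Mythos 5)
The paper gives no argument for this lemma (it is simply declared elementary), so your write-up is the only proof on the table; I judge it on its own terms. Your overall strategy is the right one and is surely what the author has in mind: identify the generic $ext$ between two real Schur roots of $\T$ with $\Ext^1_{kQ}$ between the corresponding rigid uniserial modules, convert this to $\Hom$-vanishing by Auslander--Reiten duality inside the tube, treat $\delta$ separately via Lemma~\ref{gen-ext-tube}, and deduce the second assertion from Theorem~\ref{max-rigid-tube} by showing that a maximal compatible set consists of $[0,m]$ together with the intervals of the summands of a maximal basic rigid object. All of that is sound.

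The gap sits exactly at the step you label ``bookkeeping-heavy'' and then do not carry out: the claim that $\Hom(M_\beta,\tau M_\alpha)\neq 0$ precisely when the two arcs cross. With the dictionary as the paper states it ($[i,j]$ corresponds to the uniserial with regular composition series $\tau^{-i}S,\dots,\tau^{-(j-1)}S$, and compatibility means disjoint or nested as subsets of $\Z/(m+1)$), this claim is false, so the deferred verification is not routine --- it forces a repair of the dictionary, i.e.\ of the statement itself. Concretely, take $m=3$ and set $S_i=\tau^{-i}S$. Then $S_0\leftrightarrow[0,1]=\{0,1\}$ and $S_2\leftrightarrow[2,3]=\{2,3\}$ are disjoint, hence compatible, yet $\Ext^1(S_0,S_2)\cong D\Hom(S_2,\tau S_0)=D\Hom(S_2,S_2)\neq 0$, so these two real Schur roots are not $ext$-orthogonal; conversely, $S_0$ and the length-two uniserial with socle $S_2$ and top $S_0$, which the dictionary sends to $[2,0]=\{2,3,0\}$, are $ext$-orthogonal although $\{0,1\}$ and $\{2,3,0\}$ are incompatible. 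A count confirms the mismatch: as subsets of $\Z/(m+1)$ there are $m^2$ distinct intervals, while $\T$ has only $m(m-1)+1$ Schur roots, so the advertised bijection with $\I(m)$ cannot exist. The fix is to shift one endpoint (send the module with factors $S_i,\dots,S_{j-1}$ to the arc from $i-1$ to $j$, equivalently declare adjacent-but-disjoint intervals incompatible), after which your argument does go through; but as written the heart of the first assertion is asserted rather than proved, and the second assertion inherits the problem because it relies on the interval-to-root bijection.
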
 
We consider next the set $\mathcal{B}$ with elements the maximal sets of compatible intervals containing $[0,m]$.

\begin{lemma}\label{tube mutation}
Let $\alpha_1, \ldots, \alpha_{m}$ be a maximal set of ext-orthogonal Schur roots in $\T$.  Then there is exactly one Schur root $\alpha_1'\not = \alpha_1$ in $\T$ such that $\alpha_1', \alpha_2, \cdots, \alpha_{m}$ is a maximal ext-orthogonal collection.

Furthermore there are at most two distinct Schur roots  
$$\alpha,\ \beta \in \{\delta, \alpha_2, \cdots, \alpha_{m} \} $$ such that there is up to isomorphism exactly one non-split exact sequence 
$$0 \to A_1 \to A \to A_1' \to 0 $$ where $A_1$ and $A_1'$ are indecomposable regular representations with dimension vectors $\alpha_1$ and $\alpha_1'$ and $A$ is the direct sum of two indecomposable regular representations  of $\T$ with dimension vectors $\alpha$ and $\beta$. 
\end{lemma}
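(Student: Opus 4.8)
The plan is to transport the whole statement into the interval combinatorics recalled just before it, and to prove it there using one small module-theoretic input. Under the bijection $\I(m)\leftrightarrow\{\text{Schur roots of }\T\}$ the collection $\alpha_1,\dots,\alpha_m$ becomes an inclusion-maximal family $I_1,\dots,I_m$ of $m$ pairwise compatible intervals, with $I_k$ corresponding to $\alpha_k$ and $[0,m]$ to $\delta$; since $[0,m]$ contains every interval as a set, it is compatible with all intervals and therefore occurs in every maximal family, so it is one of the $I_k$. Deleting $\delta$ leaves a maximal rigid object of $\T$, which by Theorem~\ref{max-rigid-tube} cannot be enlarged by a second imaginary root, so there is no nontrivial mutation at $\delta$ inside $\T$; we may thus assume $\alpha_1\neq\delta$, and then $\alpha_1$ is a real Schur root and $I_1$ is an interval of length at most $m-1$.

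For the first assertion, maximality of $\{I_1,\dots,I_m\}$ says that every interval outside this family crosses some $I_k$, so after deleting $I_1$ the intervals compatible with all of $I_2,\dots,I_m$ are exactly $I_1$ together with those intervals crossing $I_1$ and no other $I_k$; by the completion part of the preceding Lemma at least one such interval $I_1'\neq I_1$ exists, and what is left is uniqueness. Here I would read off the laminar data at $I_1$ --- the smallest interval $P$ among $[0,m],I_2,\dots,I_m$ strictly containing $I_1$, the maximal intervals $C_1,\dots,C_r$ of $\{I_2,\dots,I_m\}$ strictly inside $I_1$, and the maximal intervals of $\{I_2,\dots,I_m\}$ lying in $P$ but incomparable with $I_1$ --- and observe that an interval crossing $I_1$ and compatible with the remaining $I_k$ must sit strictly between $C_1\cup\dots\cup C_r$ and $P$, be disjoint from the incomparable siblings, and overlap $I_1$ properly; writing out the cyclic order of the endpoints in play one checks that there are exactly two such intervals, $I_1$ itself and one further interval, which transports back to the required $\alpha_1'$. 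This ``unique flip'' for families of compatible intervals is the combinatorial heart of the lemma and the step I expect to be the main obstacle: the check is elementary but must be carried out with care because of the cyclic indexing and the exceptional interval $[0,m]$, and it is essentially the endpoint bookkeeping of Appendix~A of \cite{BuanKrause1}.

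For the second assertion, first note that $\alpha_1$ and $\alpha_1'$ are not ext-orthogonal, since otherwise $\alpha_1,\alpha_1',\alpha_2,\dots,\alpha_m$ would be $m+1$ pairwise ext-orthogonal Schur roots of $\T$, contradicting maximality. Let $A_1,A_1'$ be the unique Schurian indecomposable regular modules of $\T$ of dimension vectors $\alpha_1,\alpha_1'$; interchanging $\alpha_1,\alpha_1'$ if necessary, assume $\ext^1_{kQ}(A_1',A_1)\neq 0$. Because the category of regular modules of $\T$ is uniserial and the intervals $I_1,I_1'$ cross, the standard computation of $\ext^1$ in a uniserial category shows $\ext^1_{kQ}(A_1',A_1)$ is one-dimensional, so there is, up to isomorphism, a unique non-split exact sequence $0\to A_1\to A\to A_1'\to 0$, and its middle term $A$ is the direct sum of at most two indecomposable regular modules of $\T$, namely those through which the crossing of $I_1$ and $I_1'$ is resolved (concretely, the ``union'' and the ``intersection'' interval, one of which may degenerate to a point, leaving $A$ indecomposable). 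Since $I_1,I_1'$ have length at most $m-1$ and cross, the union has length at most $m$ and the intersection at most $m-2$, so both are genuine intervals of $\I(m)$ and the dimension vectors $\alpha,\beta$ of the summands of $A$ are Schur roots of $\T$.

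It then remains to show $\alpha,\beta\in\{\delta,\alpha_2,\dots,\alpha_m\}$. Applying $\Hom_{kQ}(-,M(\alpha_i))$ and $\Hom_{kQ}(M(\alpha_i),-)$ to $0\to A_1\to A\to A_1'\to 0$ for $i=2,\dots,m$: whenever $\alpha_i$ is real, $M(\alpha_i)$ is its unique indecomposable, hence the generic representation of dimension $\alpha_i$, so the groups $\ext^1_{kQ}(A_1,M(\alpha_i))$, $\ext^1_{kQ}(M(\alpha_i),A_1)$, $\ext^1_{kQ}(A_1',M(\alpha_i))$, $\ext^1_{kQ}(M(\alpha_i),A_1')$ all vanish by ext-orthogonality of $\alpha_1,\alpha_1'$ with $\alpha_i$; hence $\ext^1_{kQ}(A,M(\alpha_i))=\ext^1_{kQ}(M(\alpha_i),A)=0$, so every indecomposable summand of $A$ is ext-orthogonal to each real $\alpha_i$ and, being an interval contained in $[0,m]$, is automatically compatible with $\delta$. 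Thus $\{\alpha,\alpha_2,\dots,\alpha_m\}$ is a family of pairwise ext-orthogonal Schur roots of $\T$; if $\alpha\notin\{\alpha_2,\dots,\alpha_m\}$ it has $m$ elements, so it is maximal and, by the first assertion, equals $\{\alpha_1,\alpha_2,\dots,\alpha_m\}$ or $\{\alpha_1',\alpha_2,\dots,\alpha_m\}$, forcing $\alpha\in\{\alpha_1,\alpha_1'\}$. But $A_1,A_1'$ are bricks and $\alpha_1\neq\alpha_1'$, so neither $A_1$ nor $A_1'$ can be a direct summand of the middle term of the non-split sequence $0\to A_1\to A\to A_1'\to 0$, a contradiction. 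Hence $\alpha\in\{\alpha_2,\dots,\alpha_m\}$, and likewise $\beta$, which finishes the plan.
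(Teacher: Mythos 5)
Your strategy coincides with the paper's: both proofs translate the statement into the interval combinatorics of $\I(m)$ and work there. The difficulty is that the one step you yourself flag as ``the main obstacle'' --- the verification that exactly one interval $I_1'\neq I_1$ is compatible with $I_2,\dots,I_m$ --- is not carried out, and the description you give of the candidates is incorrect. You require a candidate to ``be disjoint from the incomparable siblings'', i.e.\ from the maximal members of $\{I_2,\dots,I_m\}$ that are disjoint from $I_1$. That constraint excludes the true flip partner. Take $m=5$ and the maximal compatible family $\{[0,5],[0,2],[3,5],[0,1],[3,4]\}$ with $I_1=[0,2]$: the unique interval other than $[0,2]$ compatible with the remaining four is $[2,5]$, which \emph{contains} the sibling $[3,5]$. (The general phenomenon: a candidate $J$ crossing $I_1$ and compatible with a sibling $S$ disjoint from $I_1$ must satisfy $J\cap S=\emptyset$ or $S\subseteq J$, since $J\subseteq S$ would force $J\cap I_1=\emptyset$; and the second case does occur.) So as written your case analysis would wrongly conclude that no flip partner exists in such configurations. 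The paper instead constructs $I'$ explicitly --- it shares one endpoint with the smallest interval $I^{+}$ of the family having a common endpoint with $I_1$, and its other endpoint is determined by the intervals of the family ending at $\sup I_1$ --- and proves uniqueness by showing that any second candidate $I''$ would produce an interval compatible with the whole family but not belonging to it, contradicting maximality. Some such argument has to be supplied; this is the substantive content of the first assertion and it is missing from your proposal.

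For the second assertion your route is genuinely different from the paper's and is sound once the first assertion is in place. The paper simply observes that the middle term of the unique non-split extension corresponds to $I_1\cup I_1'$ and $I_1\cap I_1'$ (the latter discarded when it degenerates to a point), and that both of these intervals already belong to the maximal family, so $\alpha,\beta\in\{\delta,\alpha_2,\dots,\alpha_m\}$ holds by construction. You instead apply $\Hom(-,M(\alpha_i))$ and $\Hom(M(\alpha_i),-)$ to the sequence to show that the summands of $A$ are ext-orthogonal to $\alpha_2,\dots,\alpha_m$, and then use maximality together with the first assertion to force them into the family; the exclusion of $\alpha\in\{\alpha_1,\alpha_1'\}$ is legitimate because the summands of $A$ correspond to $I_1\cup I_1'$ and $I_1\cap I_1'$, whose dimension vectors are respectively strictly larger and strictly smaller than both $\alpha_1$ and $\alpha_1'$. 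This is a workable alternative, but it is longer than the paper's one-line observation and it depends on the part of the argument you have not completed.
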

\begin{proof}
Let $I$ be the interval corresponding to $\alpha_1$ and let $Z \in \mathcal{B}$ be the set of maximal compatible intervals containing the $m$ intervals associated to $\alpha_1, \ldots, \alpha_m$.  Without loss of generality, we can assume that inf $I=0$. Then there is an interval $I^+$ in $Z-\{I\}$ such that either  inf $I =$ inf $I^+$ or sup $ I= $sup $I^+$. We assume without loss of generality that the first case holds and pick the smallest interval $I^+$, satisfying that property.
By the compatibility, we assume that $I$ is also a subset of $I^+$. The converse case of $I^+$ being contained in $I$ can be treated similarly. 
Set $I':=[i,j]$ where $j:=$sup $I^+$ and $i:= \text{min}\{ \text{ inf }S  | S \in Z-\{I\} , \text{sup }S= \text{sup }I \}$ or $i =$ sup $I$ if the the set is empty. 
Then $I'$ is the unique interval different from $I$ and compatible with $Z-\{I\}$. 
We can see this as follows. Assume that there is another interval $I''\not =I'$ compatible with $Z-\{I\}$. Then $I''$ is not compatible with $I$ and $I'$.  Furthermore by the compatibility with $Z-\{I\}$, we have that $I'' $ is contained in $I^{+}$. Hence we find 

 $$\mbox{ inf }I < \mbox{ inf }I'' < \mbox{inf }  I' \le \mbox{ sup } I < \mbox{sup } I '' \le \mbox{ sup }I' .$$ Then $A:=[\mbox{ inf }I'',\mbox{ sup }I']$ is compatible with $Z-\{I\} \cup\{I'\}$. But $A$ does not lie in $Z-\{I\}$ as it is not compatible with $I$. Hence we obtain a contradiction to the assumption that $Z$ is maximal. 

We have that $I^+=I\cup I'$ and $I^-:= I\cap I'$ is also contained in $Z$, as it is compatible by construction with all intervals in $Z-\{I\}$.  
If $I^-$ consists of only one point, then it is not an element of $\mathcal{I}$ and we ignore it. 

Let $\alpha_1'$, $\alpha$ and $\beta$ be the Schur roots corresponding to $I'$, $I^+$ and $I^-$. Furthermore let $A_1'$ and $A_1$ be the Schurian representations associated to $\alpha_1$ and $\alpha_1'$ and let $A$ be the direct sum of the two indecomposable representations associated to the roots $\alpha$ and $\beta$. Then there exists a non-split exact sequence $$0 \to A_1 \to A \to A_1' \to 0.$$ It is uniquely determined up to isomorphism as extensions between two Schurian representations in a tube are at most one-dimensional. 
The second case follows analogously.
\end{proof}
Note that it is not clear whether we can  obtain exchange relations on the level of generic cluster characters. Indeed $A$ could have an indecomposable direct summand $C$ of dimension vector $\delta$. Then $C$ would not be a regular simple representation, and in this case it is not known wether $X_C$ is equal to the generic cluster character $X_{\delta}$.
\begin{theo}
Let $\delta, \alpha_1,\ldots, \alpha_{n-2}$ be a component cluster ordered in such a way that $\alpha_1,\ldots, \alpha_m$ belong to the same tube. Then there is exactly one Schur root $\alpha_1'\not=\alpha_1$ such that 
$\delta, \alpha_1',\ldots, \alpha_{n-2}$ is a component cluster. 

In this case $\alpha_1$ and $\alpha_1'$ belong to the same tube and $\alpha_1+ \alpha_1'$ has a generic decomposition as a direct sum of either one or two Schur roots in $\{\delta, \alpha_2, \ldots, \alpha_m \}$. 
\end{theo}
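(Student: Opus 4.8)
The plan is to reduce the statement to the combinatorics of a single tube, governed by Lemma~\ref{tube mutation}, and then to read off the generic decomposition of $\alpha_1+\alpha_1'$ from the short exact sequence produced there, using part~2 of Kac's generic decomposition theorem. Let $\T$ be the exceptional tube containing $\alpha_1,\dots,\alpha_m$ and let $p$ be its rank. As established in the proof of Theorem~\ref{affine}, any component cluster containing $\delta$ meets each exceptional tube of rank $p_i$ in exactly $p_i-1$ Schur roots and contains no other regular root, and $\sum_i(p_i-1)=n-2$. In particular $m=p-1$, the roots $\alpha_{m+1},\dots,\alpha_{n-2}$ exhaust the regular roots of the given cluster $C_1$ lying in tubes other than $\T$, and $\{\delta,\alpha_1,\dots,\alpha_m\}$ is a maximal ext-orthogonal collection of Schur roots in $\T$, of size $p$.

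The key step is to check that any component cluster $C$ of the form $\{\delta,\beta,\alpha_2,\dots,\alpha_{n-2}\}$ with $\beta\neq\alpha_1$ has $\beta$ lying in $\T$. Since $\delta\in C$, Theorem~\ref{affine} gives $|C|=n-1$, so $\beta\notin\{\alpha_2,\dots,\alpha_{n-2}\}$; moreover $\beta$ is ext-orthogonal to $\delta$ and distinct from it, hence by Lemma~\ref{gen-ext-tube} a real regular Schur root lying in some exceptional tube $\T'$. If $\T'\neq\T$, then $C$ would contain, besides $\beta$, all $p'-1$ roots of $C_1$ lying in $\T'$ (here $p'$ is the rank of $\T'$; these roots are among $\alpha_{m+1},\dots,\alpha_{n-2}$ and are distinct from $\beta$), giving $p'$ pairwise ext-orthogonal real Schur roots of $\T'$ inside $C$, contradicting Theorem~\ref{max-rigid-tube}. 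Hence $\T'=\T$, and $\{\delta,\beta,\alpha_2,\dots,\alpha_m\}$ is a maximal ext-orthogonal collection in $\T$. This reduction, resting on the rigidity of the tube-by-tube count of a component cluster containing $\delta$, is the step I expect to carry the weight; the remainder is assembly.

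Now apply Lemma~\ref{tube mutation} to the maximal ext-orthogonal collection $\{\delta,\alpha_1,\dots,\alpha_m\}$ in $\T$, mutating at $\alpha_1$ (legitimate since $\alpha_1\neq\delta$): this yields a unique Schur root $\alpha_1'\neq\alpha_1$ in $\T$ with $\{\delta,\alpha_1',\alpha_2,\dots,\alpha_m\}$ again a maximal ext-orthogonal collection in $\T$. Put $C_2:=\{\delta,\alpha_1',\alpha_2,\dots,\alpha_{n-2}\}$. Its elements are pairwise distinct (those in $\T$ because $\{\delta,\alpha_1',\alpha_2,\dots,\alpha_m\}$ has $p$ elements, and $\alpha_1'\neq\alpha_j$ for $j>m$ because $\alpha_1'$ lies in $\T$ while $\alpha_j$ does not), and pairwise ext-orthogonal (the pairs inside $\T$ by Lemma~\ref{tube mutation}; the pairs $(\delta,\alpha_j)$ and $(\alpha_1',\alpha_j)$ with $j>m$ by Lemma~\ref{gen-ext-tube}; the rest because they already lie in $C_1$). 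Extending $C_2$ to a component cluster $C_3$, Theorem~\ref{affine} forces $|C_3|\in\{n-1,n\}$, and $|C_3|=n$ is impossible because $\delta\in C_3$; hence $|C_3|=n-1=|C_2|$, so $C_3=C_2$ and $C_2$ is itself a component cluster. For uniqueness, any component cluster $\{\delta,\beta,\alpha_2,\dots,\alpha_{n-2}\}$ with $\beta\neq\alpha_1$ has $\beta\in\T$ and $\{\delta,\beta,\alpha_2,\dots,\alpha_m\}$ maximal ext-orthogonal in $\T$ by the previous step, so $\beta=\alpha_1'$ by the uniqueness clause of Lemma~\ref{tube mutation}.

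It remains to determine the generic decomposition of $\alpha_1+\alpha_1'$. By the second part of Lemma~\ref{tube mutation} there are (at most two, distinct) Schur roots $\alpha,\beta\in\{\delta,\alpha_2,\dots,\alpha_m\}$ together with a non-split exact sequence $0\to A_1\to A\to A_1'\to 0$ in which $A_1,A_1'$ are indecomposable regular of dimension vectors $\alpha_1,\alpha_1'$ and $A$ is a direct sum of one or two indecomposable regular modules of $\T$ with dimension vectors $\alpha$ (and $\beta$ in the two-root case). Comparing dimension vectors gives $\alpha_1+\alpha_1'=\alpha+\beta$, or $\alpha_1+\alpha_1'=\alpha$ when only one root occurs. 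Since $\alpha$ and $\beta$ are distinct Schur roots contained in the pairwise ext-orthogonal set $\{\delta,\alpha_2,\dots,\alpha_m\}\subseteq C_1$, the decomposition $\alpha\oplus\beta$ meets the hypotheses of part~2 of Kac's theorem, and hence, by uniqueness of the generic decomposition, it is the generic decomposition of $\alpha_1+\alpha_1'$; in the one-root case $\alpha_1+\alpha_1'=\alpha$ is a single Schur root, hence its own generic decomposition. This yields the asserted generic decomposition of $\alpha_1+\alpha_1'$ into one or two Schur roots of $\{\delta,\alpha_2,\dots,\alpha_m\}$, completing the proof.
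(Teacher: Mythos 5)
Your proof is correct and follows the same route as the paper: reduce to the single tube $\T$ and invoke Lemma \ref{tube mutation} for existence and uniqueness of $\alpha_1'$, then read off the generic decomposition of $\alpha_1+\alpha_1'$ from the non-split sequence produced there. You supply two details the paper leaves implicit --- that any replacement root $\beta$ must be a real regular root of the \emph{same} tube $\T$ (via Lemma \ref{gen-ext-tube} and the count from Theorem \ref{max-rigid-tube}), and that uniqueness in Kac's theorem identifies $\alpha\oplus\beta$ as \emph{the} generic decomposition --- both of which are accurate and strengthen the argument.
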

\begin{proof}
By Lemma \ref{tube mutation}, there is exactly one Schur root $\alpha_1'$ different from $\alpha_1$ which belongs to the same tube of $\alpha_1$, and which completes $\delta, \alpha_2,\ldots, \alpha_m$ to a component cluster. The second part follows immediately from the previous Lemma. 
\end{proof}


\begin{thebibliography}{10}


  
\bibitem{BuanKrause} Aslak Buan and Henning Krause, \emph{Cotilting modules over tame hereditary algebras},
   Pacific J. Math. 211 (2003), 4159.
  
\bibitem{BuanKrause1} Aslak Buan and Henning Krause, \emph{Tilting and cotilting for quivers of type An},
   J. Pure Appl. Algebra 190 (2004), 121.

\bibitem{BuanMarshReinekeReitenTodorov06}
Aslak Bakke~Buan, Robert~J. Marsh, Markus Reineke, Idun Reiten, and Gordana
  Todorov, \emph{Tilting theory and cluster combinatorics}, Advances in
  Mathematics \textbf{204 (2)} (2006), 572--618.

\bibitem{CalderoChapoton06} 
Philippe Caldero and Frederic Chapoton, 
\emph{Cluster algebras as Hall algebras of quiver representations}, Comment. Math. Helv. 81 (2006), no. 3, 595--616.


\bibitem{CerulliLabardiniSchroer} Giovanni Cerulli-Irelli, Daniel Labardini-Fragoso and Jan Schr\"oer
\emph{Caldero-Chapoton algebras}, Transactions of the American Mathematical Society. 367 (2015), 2787--2822.
  

   
\bibitem{CalderoKeller08}
Philippe Caldero and Bernhard Keller, \emph{From triangulated categories to cluster algebras}, Inv. Math. 172 (2008), 169--211.
  
  
\bibitem{CrawleyBoevey92}
William Crawley-Boevey, \emph{Lectures on representations of quivers},
  unpublished notes.


  
\bibitem{DerksenWeyman} Harm Derksen and Jerzy Weyman,
 \emph{ On the canonical decomposition of quiver representations}
Compositio Math.\ 133 (2002), no.\ 3, 245--265.   



\bibitem{DerksenWeymanZelevinsky} Harm Derksen, Jerzy Weyman and Andrei Zelevinsky, \emph{Quivers with potentials and their
representations II: Applications to cluster algebras}, J. Amer. Math. Soc. 23 (2010), no. 3, 749--790.


\bibitem{Dupont} Gr{\'e}goire Dupont, \emph{ Generic variables in acyclic cluster algebras and bases in affine cluster algebras}, Phd thesis. 

 
\bibitem{FominZelevinsky02}
Sergey Fomin and Andrei Zelevinsky, \emph{Cluster algebras. {I}.
  {F}oundations}, J. Amer. Math. Soc. \textbf{15} (2002), no.~2, 497--529
  (electronic).


\bibitem{Happel91a}
Dieter Happel, \emph{Auslander-{R}eiten triangles in derived categories of
  finite-dimensional algebras}, Proc. Amer. Math. Soc. \textbf{112} (1991),
  no.~3, 641--648.


\bibitem{HappelUnger05a}
Dieter Happel and Luise Unger, \emph{On the set of tilting objects in
  hereditary categories}, Representations of algebras and related topics,
  Fields Inst. Commun., vol.~45, Amer. Math. Soc., Providence, RI, 2005,
  pp.~141--159.

  
\bibitem{Kac} Victor Kac, \emph{ Infinite root systems, representations of graphs and invariant theory. II}, J. Algebra 78 (1982) no.\ 1, 141-162.  
  
\bibitem{Keller05}
Bernhard Keller, \emph{On triangulated orbit categories}, Doc. Math. \textbf{10}
  (2005), 551--581.

\bibitem{Keller08c}
Bernhard Keller, \emph{Cluster algebras, quiver representations and
  triangulated categories}, arXive:0807.1960 [math.RT].
  

\bibitem{Keller10b}
Bernhard Keller, \emph{Cluster algebras, quiver representations and triangulated
  categories}, Triangulated categories (Thorsten Holm, Peter J{\o}rgensen, and
  Rapha\"el Rouquier, eds.), London Mathematical Society Lecture Note Series,
  vol. 375, Cambridge University Press, 2010, pp.~76--160.


  
\bibitem{KellerYang11}
Bernhard Keller and Dong Yang, \emph{Derived equivalences from mutations of
  quivers with potential}, Advances in Mathematics \textbf{26} (2011),
  2118--2168.
  

  
  
\bibitem{Schofield} Aidan Schofield, \emph{General representations of quivers}, Proc. London Math. Soc. (3) 65 (1992), no. 1, 46--64. 



\end{thebibliography}
\def\cprime{$'$} \def\cprime{$'$}
\providecommand{\bysame}{\leavevmode\hbox to3em{\hrulefill}\thinspace}
\providecommand{\MR}{\relax\ifhmode\unskip\space\fi MR }
\providecommand{\MRhref}[2]{%
  \href{http://www.ams.org/mathscinet-getitem?mr=#1}{#2}
}
\providecommand{\href}[2]{#2}

\end{document}